\numberwithin{equation}{section}
\newcommand{\rhom}{R{\mathcal{H}}om}
\newcommand{\BDC}{{\mathbf{D}}^{\mathrm{b}}}
\newcommand{\shom}{{\mathcal{H}}om}
\newcommand{\CC}{\mathbb{C}}
\newcommand{\NN}{\mathbb{N}}
\newcommand{\RR}{\mathbb{R}}
\newcommand{\QQ}{\mathbb{Q}}
\newcommand{\ZZ}{\mathbb{Z}}
\newcommand{\D}{\mathcal{D}}
\newcommand{\E}{\mathcal{E}}
\newcommand{\LL}{\mathcal{L}}
\newcommand{\M}{\mathcal{M}}
\newcommand{\N}{\mathcal{N}}
\newcommand{\sho}{\mathcal{O}}
\newcommand{\R}{\mathcal{R}}
\newcommand{\CS}{\mathcal{S}}
\newcommand{\CH}{\mathcal{H}}
\newcommand{\CK}{\mathcal{K}}
\newcommand{\CAC}{\mathcal{C}}
\newcommand{\CP}{\mathcal{P}}
\newcommand{\PP}{{\mathbb P}}
\newcommand{\Lin}{{\rm Lin}}
\newcommand{\Ker}{{\rm Ker}}
\newcommand{\cl}{{\rm cl}}
\newcommand{\GFT}{{ \wedge }}
\newcommand{\FT}{{ \vee }}
\newcommand{\rd}{{\rm rd}}
\newcommand{\an}{{\rm an}}
\renewcommand{\dim}{{\rm dim}}
\newcommand{\Vol}{{\rm Vol}}
\newcommand{\Eu}{{\rm Eu}}
\newcommand{\e}{\varepsilon}
\newcommand{\Spec}{{\rm Spec}}
\newcommand{\id}{{\rm id}}
\newcommand{\Int}{{\rm Int}}
\newcommand{\tl}[1]{\widetilde{#1}}
\newcommand{\wht}[1]{\widehat{#1}}
\newcommand{\simto}{\overset{\sim}{\longrightarrow}}
\newtheorem{definition}{Definition}[section]
\newtheorem{theorem}[definition]{Theorem}
\newtheorem{proposition}[definition]{Proposition}
\newtheorem{lemma}[definition]{Lemma}
\newtheorem{corollary}[definition]{Corollary}
\newtheorem{example}[definition]{Example}
\newtheorem{remark}[definition]{Remark}
\title{Confluent $A$-hypergeometric 
functions and rapid decay homology cycles 
\footnote{{\bf 2010 Mathematics 
Subject Classification: }14M25, 
32S40, 32S60, 
33C15, 35A27}}
\author{Alexander ESTEROV 
\footnote{ National Research University 
Higher School of Economics. \newline Faculty 
of Mathematics NRU HSE, 7 Vavilova 117312 
Moscow, Russia. 
E-mail: aesterov@hse.ru. \newline 
This study (research grant No 14-01-0152) 
was supported by The National 
Research University -- Higher School 
of Economics Academic 
Fund Program in 2014/2015. Partially 
supported by RFBR 
grants 13-01-00755 and 12-01-31233, 
MESRF grant MK-6223.2012.1, 
and the Dynasty Foundation fellowship.} 
and Kiyoshi TAKEUCHI 
\footnote{Institute of Mathematics, University  of 
Tsukuba, 1-1-1, Tennodai, 
Tsukuba, Ibaraki, 305-8571, Japan. 
E-mail: takemicro@nifty.com } }
\date{}
\begin{document}

\maketitle

\begin{abstract}
We study confluent $A$-hypergeometric functions 
introduced by Adolphson  
\cite{A}. In particular, we give 
their integral representations 
by using rapid decay homology 
cycles of Hien \cite{H-1} and 
\cite{H-2}. The method of toric 
compactifications introduced in \cite{L-S} 
and \cite{M-T-3} will be used to prove 
our main theorem. Moreover we apply it to 
obtain a formula for the asymptotic expansions 
at infinity of confluent 
$A$-hypergeometric functions. 
\end{abstract}

\section{Introduction}\label{sec:1}

The theory of $A$-hypergeometric systems 
introduced by Gelfand-Graev-Kapranov-Zelevinsky 
\cite{G-G-Z}, \cite{G-K-Z-1} is a vast generalization 
of that of classical hypergeometric differential 
equations. We call their holomorphic solutions 
$A$-hypergeometric functions. 
As in the case of classical hypergeometric 
functions, $A$-hypergeometric functions 
admit $\Gamma$-series expansions 
(\cite{G-K-Z-1}) and integral 
representations (\cite{G-K-Z-2}). Moreover 
this theory has deep connections with 
many other fields of mathematics, such as 
toric varieties, commutative algebra, 
projective duality, 
period integrals, mirror symmetry, 
enumerative algebraic geometry and 
combinatorics. Also from the viewpoint of 
the $\D$-module theory (see  
\cite{H-T-T}), 
$A$-hypergeometric 
$\D$-modules are very 
elegantly constructed 
in \cite{G-K-Z-2}. For the recent 
development of this subject see \cite{S-S-T} and 
\cite{S-W}. In \cite{B}, \cite{B-H}, \cite{G-K-Z-2}, 
\cite{Horja} and \cite{T-1} the 
monodromies of their $A$-hypergeometric functions 
were studied. Recall that in the theory of 
Gelfand-Graev-Kapranov-Zelevinsky 
\cite{G-G-Z}, \cite{G-K-Z-1} they 
assumed that $A$ is homogeneous (see Remark 
\ref{homo}). By removing this homogeneous 
condition, Adolphson \cite{A} 
generalized their $A$-hypergeometric systems 
to the confluent (i.e. irregular) case 
and proved many important results. 
However his construction of the confluent 
$A$-hypergeometric $\D$-modules is not 
given by the standard operations of 
$\D$-modules as in \cite{G-K-Z-1}, 
\cite{G-K-Z-2}. 
This leads us to some difficulties in  
obtaining the integral representations 
of confluent $A$-hypergeometric functions. 
Indeed, in the confluent case almost 
nothing is known about their global 
properties. In this paper, we 
construct Adolphson's 
confluent $A$-hypergeometric $\D$-modules 
as in \cite{G-K-Z-2}. 
Note that recently the same problem 
was solved more completely in Saito \cite{S1} and 
Schulze-Walther \cite{SW1}, \cite{SW2} 
by using commutative algebras. 
Our approach is based on 
sheaf-theoretical methods and totally 
different from theirs. 
Moreover we also construct 
an integral representation 
of the confluent 
$A$-hypergeometric functions 
by using the theory of 
rapid decay homology groups 
introduced recently 
in Hien \cite{H-1} and \cite{H-2}. 
Recall that $A=\{ a(1), a(2), \ldots , 
a(N)\} \subset \ZZ^{n}$ is a finite subset 
of a lattice $\ZZ^{n}$ and Adolphson's 
confluent $A$-hypergeometric system 
is defined on $\CC^A=\CC^N_z$. 
Then our integral representation of 
confluent $A$-hypergeometric functions 
\begin{equation}\label{IRF} 
u(z)= \int_{\gamma^z} 
\exp (\sum_{j=1}^N z_j x^{a(j)}) 
x_1^{c_1-1} \cdots x_n^{c_n-1} 
dx_1 \wedge \cdots \wedge dx_n  
\end{equation}
coincides with the one in Adolphson 
\cite[Equation (2.6)]{A}, 
where $\gamma =\{ \gamma^z \}$ is a family of 
real $n$-dimensional 
topological cycles $\gamma^z$ 
in the algebraic torus $T=(\CC^*)^n_x$ 
on which the function 
$\exp (\sum_{j=1}^N z_j x^{a(j)}) 
x_1^{c_1-1} \cdots x_n^{c_n-1}$ 
decays rapidly at infinity. 
More precisely $\gamma^z$ is an element of 
Hien's rapid decay homology group. 
See Sections \ref{sec:3} and \ref{sec:4} 
for the details. Adolphson used the 
formula \eqref{IRF} without giving any 
geometric condition on the cycles 
$\gamma^z$ nor proving the convergence 
of the integrals. In our Theorem \ref{Main} 
we could give a rigorous justification to 
Adolphson's formula \cite[Equation (2.6)]{A} 
by using rapid decay homology cycles. This integral 
representation can be 
considered as a natural generalization 
of those for the classical Bessel and 
Airy functions etc. Note that in the case of 
hypergeometric functions associated to 
hyperplane arrangements the same problem 
was precisely studied by Kimura-Haraoka-Takano 
\cite{K-H-T}. 
We hope that our geometric construction 
would be useful in the explicit study of 
Adolphson's confluent $A$-hypergeometric functions. 
In the proof of Theorem \ref{Main}, we shall use 
the method of toric compactifications 
introduced in \cite{L-S} 
and \cite{M-T-3} for the study of 
geometric monodromies of polynomial maps. 
Moreover we introduce 
Proposition \ref{RTH} which enables us to calculate 
Hien's rapid decay homologies 
by usual relative twisted 
homologies. By Proposition \ref{RTH} and 
Lemmas \ref{EC} and \ref{ECL} we can 
calculate the rapid decay homologies very 
explicitly in many cases. 
Let $\Delta \subset \RR^n$ be the convex 
hull of $A \cup \{ 0 \}$ in $\RR^n$ and 
$h_z: T=( \CC^*)^n_x \longrightarrow \CC$ 
the Laurent polynomial on $T$ defined by 
$h_z(x)= \sum_{j=1}^N z_j x^{a(j)}$. 
Then in Section \ref{sec:5}, assuming the 
condition $0 \in \Int ( \Delta )$ and 
using the twisted Morse theory we construct also a 
natural basis of the rapid 
decay homology group indexed by the 
critical points of $h_z$. 
Furthermore we apply it to obtain a 
precise formula for the asymptotic expansions 
at infinity of Adolphson's 
confluent $A$-hypergeometric functions. 
The formula that we obtain in Theorem \ref{ASE} 
will be very similar to that of the 
classical Bessel functions. Finally in 
Sections \ref{sec:6} and \ref{sec:7}, 
removing the condition $0 \in \Int ( \Delta )$  
we construct another natural basis of the rapid 
decay homology group. We thus partially solve 
the famous problem in Gelfand-Kapranov-Zelevinsky 
\cite{G-K-Z-2} of constructing a basis of 
the twisted homology group in their integral 
representation of non-confluent $A$-hypergeometric 
functions, in the more general case of 
confluent ones. Moreover, recently in \cite{A-E-T} 
(a slight modification of) 
this result was effectively used 
for the study of the 
monodromies at infinity of confluent 
$A$-hypergeometric functions. See \cite{A-E-T} 
for the details. 

\bigskip
\noindent{\bf Acknowledgement:} We thank Professors 
K. Ando, Y. Haraoka, T. Kohno, A. Pajitnov, 
N. Takayama and S. Tanabe 
for useful discussions during the preparation of 
this paper.

\section{Adolphson's results}\label{sec:2}

First of all, we recall the definition 
of the confluent $A$-hypergeometric systems 
introduced by Adolphson \cite{A} 
and their important properties.  In this paper, 
we essentially follow the terminology of 
\cite{H-T-T}. 
Let $A=\{ a(1), a(2), \ldots , 
a(N)\} \subset \ZZ^{n}$ be a finite subset 
of the lattice $\ZZ^{n}$. 
Assume that 
$A$ generates $\ZZ^{n}$ as in 
\cite{G-K-Z-1} and \cite{G-K-Z-2}. 
Following \cite{A} we denote by $\Delta$ the convex 
hull ${\rm conv} (A \cup \{ 0 \} )$ 
of $A \cup \{ 0\}$ in $\RR^{n}$. 
By definition $\Delta$ is an $n$-dimensional 
polytope. Let $c=(c_1, \ldots, c_n) \in 
\CC^{n}$ be a parameter vector. 
Moreover consider the 
$n \times N$ integer matrix
\begin{equation}
A:=\begin{pmatrix}
 {^t a(1)} & {^t a(2)} & \cdots & 
{^t a(N)} \end{pmatrix}=
(a_{i,j})_{1 \leq i \leq n, 1 \leq j \leq N}
 \in M(n, N, \ZZ)
\end{equation}
whose $j$-th column is ${^t a(j)}$.
Then Adolphson's confluent $A$-hypergeometric 
system on $X= \CC^A=\CC_z^{N}$ 
associated with the parameter vector 
$c=(c_1, \ldots, c_n) \in \CC^{n}$ is 
\begin{gather}
\left(\sum_{j=1}^{N} a_{i,j} 
z_j\frac{\partial}{\partial 
z_j}+c_i \right)
u(z)=0 \hspace{5mm} (1 \leq i \leq n), \\
\left\{ \prod_{\mu_j>0} \left(\frac{\partial}{\partial 
z_j}\right)^{\mu_j} -\prod_{\mu_j<0} 
\left(\frac{\partial}{\partial 
z_j}\right)^{-\mu_j} \right\} u(z)=0 
\hspace{5mm} (\mu \in {\rm Ker} A \cap 
\ZZ^{N}). 
\end{gather} 

\begin{remark}\label{homo} 
The above $A$-hypergeometric system was 
introduced first by Gelfand-Graev-Kapranov-Zelevinsky 
\cite{G-G-Z}, \cite{G-K-Z-1} under the homogeneous 
condition on $A$ i.e. when there exists 
a linear functional $l: \RR^n \longrightarrow \RR$ 
such that $l( \ZZ^n)= \ZZ$ and $A \subset l^{-1}(1)$. 
In this case, Hotta \cite{Hotta} proved that 
it is regular holonomic i.e. non-confluent. 
\end{remark}

\begin{remark}
In \cite{A} Adolphson does not assume that 
$A$ generates $\ZZ^n$. However we need this 
condition to obtain a geometric construction 
of his confluent $A$-hypergeometric systems. 
Even when $A$ does not generate $\ZZ^n$, 
by a suitable linear coordinate change of 
$\RR^n$ we can get an equivalent system for 
$A^{\prime} \subset \ZZ^n$ 
and $c^{\prime} \in \CC^n$ such 
that $A^{\prime}$ generates $\ZZ^n$. Namely 
our condition is not restrictive at all. 
\end{remark}
Let $D(X)$ be the Weyl algebra over $X$ 
and consider the differential operators 
\begin{eqnarray}
Z_{i,c} & := & \sum_{j=1}^{N} a_{ij} z_j
\frac{\partial}{\partial z_j}+c_i 
\hspace{5mm}(1\leq i\leq n),\\
\square_{\mu} 
& := & \prod_{\mu_j>0}\left(\frac{\partial}{\partial 
z_j}\right)^{\mu_j} -\prod_{\mu_j<0} \left(
\frac{\partial}{\partial 
z_j}\right)^{-\mu_j}\hspace{5mm} ( \mu 
\in {\rm Ker} A \cap 
\ZZ^{N})
\end{eqnarray}
in it. Then the above system is 
naturally identified with 
the left $D(X)$-module 
\begin{equation}
M_{A, c}=
D(X) / \left(\sum_{1 \leq i \leq n} 
D(X) Z_{i,c} + \sum_{\mu \in {\rm Ker} A 
\cap \ZZ^{N}} D(X) \square_{\mu} \right). 
\end{equation} 
Let $\D_{X}$ be the sheaf 
of differential operators 
over the algebraic variety $X$ and 
define a coherent $\D_{X}$-module by 
\begin{equation}
\M_{A, c}=
\D_{X} / \left(\sum_{1 \leq i \leq n} 
\D_{X} Z_{i,c} + \sum_{\mu \in {\rm Ker} A 
\cap \ZZ^{N} } \D_{X} \square_{\mu}\right). 
\end{equation}
Then we have an isomorphism 
$M_{A, c} \simeq \Gamma (X; \M_{A, c})$ 
(see \cite[Proposition 1.4.4]{H-T-T}). 
Adolphson \cite{A} proved that $\M_{A, c}$ 
is holonomic. In fact, he proved the 
following more precise result. 

\begin{definition}\label{AND} 
(Adolphson \cite[page 274]{A}, see also 
\cite{Oka}) For $z \in X=\CC^A$ we 
say that the Laurent polynomial 
$h_z(x)= \sum_{j=1}^N z_jx^{a(j)}$ is 
non-degenerate if for any face $\Gamma$ of 
$\Delta$ not containing the origin 
$0 \in \RR^n$ we have 
\begin{equation}\label{E-111} 
\left\{ x \in T=(\CC^*)^n \ | \ 
\frac{\partial h_z^{\Gamma}}{\partial x_1}(x)
= \cdots \cdots = 
\frac{\partial h_z^{\Gamma}}{\partial x_n}
(x)=0 \right\} = \emptyset, 
\end{equation}
where we set $h_z^{\Gamma}(x)=
\sum_{j:a(j) \in \Gamma} z_jx^{a(j)}$. 
\end{definition}

\begin{remark}
Since in the definition above 
we consider only faces $\Gamma \prec \Delta$ 
such that $\dim \Gamma \leq n-1$ and $h_z$ are 
quasi-homogeneous, 
our condition \eqref{E-111} is equivalent to 
the weaker one in \cite[page 274]{A}. 
See Kouchnirenko \cite[Definition 1.19]{Kushnirenko}. 
\end{remark}
Let $\Omega \subset X$ 
be the Zariski open subset 
of $X$ consisting of $z \in X= \CC^A$ 
such that the Laurent polynomial 
$h_z(x)= \sum_{j=1}^N z_jx^{a(j)}$ 
is non-degenerate. 
Then Adolphson's result 
in \cite[Lemma 3.3]{A} asserts 
that the holonomic $\D_X$-module 
$\M_{A, c}$ is an integrable 
connection on $\Omega$ (i.e. the 
characteristic variety of $\M_{A, c}$ 
is contained in the zero section 
of the cotangent bundle $T^*\Omega$). 
Now let $X^{\an}$ (resp. $\Omega^{\an}$) 
be the underlying 
complex analytic manifold 
of $X$ (resp. $\Omega$) and consider 
the holomorphic solution complex 
${\rm Sol}_X(\M_{A, c}) \in \BDC (X^{\an})$ 
of $\M_{A, c}$ defined by 
\begin{equation}
{\rm Sol}_X (\M_{A, c})=\rhom_{\D_{X^{\an}}}
((\M_{A, c})^{\an}, \sho_{X^{\an}})  
\end{equation}
(see \cite{H-T-T} for the details). 
Then by the above Adolphson's result, 
${\rm Sol}_X(\M_{A, c})$ is a local 
system on $\Omega^{\an}$. Moreover 
he proved the following remarkable result. 
Let $\Vol_{\ZZ}(\Delta ) \in \ZZ$ be the 
normalized (or simplicial) $n$-dimensional 
volume of $\Delta$ i.e. $n!$ times the Euclidean 
volume of $\Delta \subset \RR^n$ with respect to 
the canonical embeddings 
$\ZZ^n \subseteq \QQ^n \subseteq \RR^n$. 

\begin{theorem}\label{ADL} 
(Adolphson \cite[Corollary 5.20]{A}) 
Assume that the parameter vector 
$c \in \CC^n$ is semi-nonresonant 
(for the definition see \cite[page 284]{A}). Then 
the rank of the local system 
$H^0 {\rm Sol}_X (\M_{A, c})|_{\Omega^{\an}}$ on 
$\Omega^{\an}$ is equal to $\Vol_{\ZZ}(\Delta ) \in \ZZ$. 
\end{theorem}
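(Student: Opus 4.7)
The plan is to exploit the integrable connection structure on $\Omega^{\mathrm{an}}$ that Adolphson has already established. Since $\mathcal{M}_{A,c}|_\Omega$ is an integrable connection, its solution sheaf is a local system on $\Omega^{\mathrm{an}}$ whose rank equals the fibre dimension of $\mathcal{M}_{A,c}$ at any point $z^* \in \Omega$, i.e.\ the holonomic rank. Thus it suffices to pick a convenient non-degenerate $z^* \in \Omega$ and compute $\dim_\CC \bigl( \mathcal{M}_{A,c} \otimes_{\sho_X} \CC_{z^*} \bigr)$, which coincides with the dimension of the space of germs of holomorphic solutions at $z^*$.

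The reduction to a Newton-polytope problem on the torus would proceed as follows. Via a Fourier/Mellin-type argument (or equivalently via the integral representation \eqref{IRF}), I would identify the local solution space at $z^*$ with the top twisted algebraic de Rham cohomology
\begin{equation*}
H^n\bigl(\Omega_T^\bullet,\; d + dh_{z^*}\wedge + \sum_{i=1}^n (c_i-1)\frac{dx_i}{x_i}\wedge\bigr)
\end{equation*}
of the algebraic torus $T = (\CC^*)^n_x$. This twisted complex is quasi-isomorphic to a Koszul complex on $\sho(T)$ built from the operators $x_i\partial_{x_i}(h_{z^*}) + c_i$, $i=1,\ldots,n$. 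Since $h_{z^*}$ is non-degenerate in the sense of Definition \ref{AND}, a Kouchnirenko-type argument (cf.\ \cite{Kushnirenko}) should then show that this Koszul complex is acyclic in degrees below $n$ and that its top cohomology has dimension exactly $\Vol_\ZZ(\Delta)$.

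The main obstacle, and the place where semi-nonresonance enters decisively, is to rule out extra contributions coming from the boundary divisors when one passes from $T$ to a projective toric compactification $X_\Sigma$ with fan refining the normal fan of $\Delta$. On each toric stratum $T_\sigma \subset X_\Sigma$, the restriction of the twisted de Rham complex is governed by a face $\Gamma \prec \Delta$, and the local cohomology along $T_\sigma$ vanishes precisely when the components of $c$ in the directions transverse to $\Gamma$ avoid certain resonant integers encoded by that face. Adolphson's semi-nonresonance condition is designed to guarantee exactly these vanishings for all faces $\Gamma$ not containing $0$, and I would combine them through a Mayer-Vietoris (or weight) spectral sequence on $X_\Sigma$ to deduce that the only surviving contribution is the open torus piece, of dimension $\Vol_\ZZ(\Delta)$. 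Making the face-by-face vanishings uniform in a way compatible with the global Koszul resolution, and treating carefully the contribution of the unique face containing $0$, is the technical heart of the argument and the step I expect to be most delicate.
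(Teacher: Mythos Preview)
The paper does not prove this theorem at all: Theorem~\ref{ADL} is stated purely as a citation of Adolphson's result \cite[Corollary 5.20]{A}, with no argument given. So there is no ``paper's own proof'' to compare against; the authors use this result as a black box (notably in the proof of Theorem~\ref{Main}, where the equality $\rank = \Vol_{\ZZ}(\Delta)$ is invoked to match dimensions).

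That said, your sketch is a reasonable outline of the kind of argument Adolphson actually carries out, though the emphasis differs. Adolphson does not pass through integral representations or a toric compactification $X_\Sigma$ with a Mayer--Vietoris spectral sequence. Instead he works directly and algebraically: he identifies the stalk of $\M_{A,c}$ at a non-degenerate $z^*$ with the top cohomology of an explicit Koszul complex on the Laurent polynomial ring $\CC[x_1^{\pm 1},\ldots,x_n^{\pm 1}]$ built from the operators $x_i\partial_{x_i} h_{z^*} + c_i$, and then uses a Newton filtration argument in the style of Kouchnirenko \cite{Kushnirenko} to compute that dimension as $\Vol_{\ZZ}(\Delta)$. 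The semi-nonresonance hypothesis enters in bounding the rank from below (ensuring certain graded pieces do not collapse), not via vanishing of local cohomology along toric strata. Your compactification/spectral-sequence picture is a plausible alternative packaging, but the ``face-by-face vanishings'' you allude to would need to be made precise, and as written the proposal is a strategy rather than a proof.
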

This is a generalization of 
the famous result of Gelfand-Kapranov-Zelevinsky 
in \cite{G-K-Z-1} to 
the confluent case. Later Matusevich-Miller-Walther 
\cite{M-M-W} generalized it further by showing 
that the holonomic rank of $\M_{A, c}$ 
does not jump outside the union of 
finitely many subspaces of $\CC^n$ of 
codimension at least two. We call the sections 
of the local system 
$H^0 {\rm Sol}_X (\M_{A, c}
)|_{\Omega^{\an}}$ 
confluent $A$-hypergeometric functions 
(associated to the parameter 
$c \in \CC^n$).

\section{Hien's rapid decay 
homologies}\label{sec:3}

In this section, we review Hien's 
theory of rapid decay 
homologies invented  
in \cite{H-1} and \cite{H-2}. 
For the theory of twisted homology groups 
we refer to Aomoto-Kita \cite{A-K} and 
Pajitnov \cite{Paj}. 
Let $U$ be a smooth quasi-projective variety 
of dimension $n$ and $(\E , \nabla )$ 
($\nabla : \E \longrightarrow 
\Omega_U^1 \otimes_{\sho_U} \E$) 
an integrable connection on it. 
We consider $(\E , \nabla )$ as a left 
$\D_U$-module and set 
\begin{equation}
{\rm DR}_U (\E )=\Omega_{U^{\an}}
\otimes_{\D_{U^{\an}}}^L \E^{\an} 
\simeq \Omega_{U^{\an}}^{\bullet} 
\otimes_{\sho_{U^{\an}}} \E^{\an}[n]. 
\end{equation}
Assume that 
$i: U \hookrightarrow Z$ is a smooth projective 
compactification of $U$ such that 
$D=Z \setminus U$ is a normal crossing divisor 
and the extension $i_*\E$ of $\E$ to $Z$ 
admits a good lattice in the sense of 
Mochizuki \cite{Mochizuki}. 
Such a good compactification of $U$ for $(\E , \nabla )$ 
always exists by the fundamental 
theorem recently established by 
Mochizuki \cite{Mochizuki}. Now let $\pi : \tilde{Z} 
\longrightarrow Z^{\an}$ be the real oriented 
blow-up of $Z^{\an}$ in \cite{H-1}, \cite{H-2} 
and set $\tilde{D} = 
\pi^{-1}(D^{\an})$. Recall that $\pi$ induces 
an isomorphism $\tilde{Z} \setminus 
\tilde{D} \simto Z^{\an} \setminus D^{\an}$. 
More precisely, for each point 
$q \in D^{\an}$ by taking a local coordinate 
$(x_1, \ldots, x_n)$ on a neighborhood of 
$q$ such that $q=(0, \ldots, 0)$ and 
$D^{\an}=\{ x_1 \cdots x_k=0 \}$ the 
morphism $\pi$ is explicitly given by 
\begin{eqnarray}
([0, \varepsilon ) \times S^1)^k 
\times B(0; \varepsilon )^{n-k} 
& \longrightarrow & 
B(0; \varepsilon )^{k} 
\times B(0; \varepsilon )^{n-k}
\\
( \{ (r_i, e^{\sqrt{-1} \theta_i}) \}_{i=1}^k, 
x_{k+1}, \ldots, x_n) 
 & \longmapsto & 
( \{ r_i e^{\sqrt{-1} \theta_i} \}_{i=1}^k, 
x_{k+1}, \ldots, x_n), 
\end{eqnarray}
where we set $B(0; \varepsilon )=
\{ x \in \CC \ | \ |x|< \varepsilon \}$ 
for $\varepsilon >0$. For $p \geq 0$ 
and a subset $B \subset \tilde{Z}$ 
denote by $S_p(B)$ the $\CC$-vector 
space generated by the piecewise smooth 
maps $c: \Delta^p \longrightarrow 
B$ from the $p$-dimensional simplex 
$\Delta^p$. We denote by 
$\CAC^{-p}_{\tilde{Z}, \tilde{D}}$ 
the sheaf on $\tilde{Z}$ associated to 
the presheaf 
\begin{equation}
V \longmapsto S_p(\tilde{Z}, 
(\tilde{Z} \setminus V) \cup \tilde{D}) 
=S_p(\tilde{Z}) /  S_p 
((\tilde{Z} \setminus V) \cup \tilde{D}). 
\end{equation}
Namely $\CAC^{-p}_{\tilde{Z}, \tilde{D}}$ 
is the sheaf of the relative $p$-chains 
on the pair $(\tilde{Z}, \tilde{D})$. 
Now let $\LL :=H^{-n} {\rm DR}_U( \E )
= \Ker \{ \nabla^{\an}: \E^{\an} 
\longrightarrow \Omega^1_{U^{\an}} 
\otimes_{\sho_{U^{\an}}} \E^{\an} \}$ 
be the sheaf of horizontal sections 
of the analytic connection 
$(\E^{\an}, \nabla^{\an})$ and 
$\iota : U^{\an} \hookrightarrow 
\tilde{Z}$ the inclusion. Then 
$\iota_* \LL$ is a local system on 
$\tilde{Z}$. We define the sheaf 
$\CAC^{-p}_{\tilde{Z}, \tilde{D}}( 
\iota_* \LL )$ 
of the relative twisted $p$-chains 
on the pair $(\tilde{Z}, \tilde{D})$ 
with coefficients in $\iota_* \LL$ by 
$\CAC^{-p}_{\tilde{Z}, 
\tilde{D}}( \iota_* \LL )
=\CAC^{-p}_{\tilde{Z}, \tilde{D}} 
\otimes_{\CC_{\tilde{Z}}} \iota_* \LL$. 

\begin{definition} 
(Hien \cite{H-1} and \cite{H-2}) 
A section $\gamma =c \otimes s \in \Gamma 
(V; \CAC^{-p}_{\tilde{Z}, \tilde{D}}( 
\iota_* \LL ))$ is called a rapid 
decay chain if for any point 
$q \in c(\Delta^p) \cap \tilde{D} 
\cap V$ the following condition holds: 
\par \indent 
In a local coordinate 
$(x_1, \ldots, x_n)$ on a neighborhood of 
$q$ in $Z$ such that $q=(0, \ldots, 0)$ and 
$D^{\an}=\{ x_1 \cdots x_k=0 \}$ 
by taking a local trivialization 
$(i_*\E )^{\an} \simeq \oplus_{i=1}^r 
\sho_{Z^{\an}}(*D^{\an})e_i$ 
with respect to a basis $e_1, \ldots, e_r$ 
and setting $s=\sum_{i=1}^r f_i \cdot 
\iota_* i^{-1} e_i$ 
($f_i \in \iota_* \sho_{Z^{\an}}$), 
for any $1 \leq i \leq r$ and 
$N=(N_1, \ldots, N_k) \in \NN^k$ 
there exists $C_N>0$ such that 
\begin{equation}
 | f_i(x) | \leq 
C_N |x_1|^{N_1} \cdots |x_k|^{N_k}
\end{equation}
for any $x \in (c(\Delta^p) \setminus 
\tilde{D}) \cap V$ with small $|x_1|, 
\ldots, |x_k|$. 
\par \indent In particular, 
if $c(\Delta^p) \cap \tilde{D} 
\cap V = \emptyset$ we do not impose any 
condition on $s \in \iota_* \LL$. 
\end{definition} 
Note that this definition does not 
depend on the local coordinate 
$(x_1, \ldots, x_n)$ 
nor the local trivialization 
$(i_*\E )^{\an} \simeq \oplus_{i=1}^r 
\sho_{Z^{\an}}(*D^{\an})e_i$. 
We denote by $\CAC^{\rd ,-p}_{\tilde{Z}, 
\tilde{D}}( \iota_* \LL )$ the subsheaf 
of $\CAC^{-p}_{\tilde{Z}, \tilde{D}}( 
\iota_* \LL )$ consisting of rapid decay 
chains. According to Hien 
\cite{H-1} and \cite{H-2}, 
$\CAC^{\rd ,-p}_{\tilde{Z}, 
\tilde{D}}( \iota_* \LL )$ is a fine 
sheaf. Then we obtain a complex of 
fine sheaves on $\tilde{Z}$: 
\begin{equation}
\CAC^{\rd , \bullet}_{\tilde{Z}, 
\tilde{D}}( \iota_* \LL ) = 
\left[ \cdots \longrightarrow 
\CAC^{\rd ,-(p+1)}_{\tilde{Z}, 
\tilde{D}}( \iota_* \LL )  \longrightarrow 
\CAC^{\rd ,-p}_{\tilde{Z}, 
\tilde{D}}( \iota_* \LL )  \longrightarrow 
\CAC^{\rd ,-(p-1)}_{\tilde{Z}, 
\tilde{D}}( \iota_* \LL ) 
\longrightarrow \cdots \right].
\end{equation}

\begin{definition} 
(Hien \cite{H-1} and \cite{H-2}) 
For $p \in \ZZ$ we set 
\begin{equation}
H_p^{\rd}(U; \E ):=H^{-p} \Gamma 
(\tilde{Z} ; \CAC^{\rd , \bullet}_{\tilde{Z}, 
\tilde{D}}( \iota_* \LL ) )
\end{equation}
and call it the $p$-th rapid decay 
homology group associated to 
the integrable connection $\E$. 
\end{definition} 
In \cite{H-2} Hien proved that $H_p^{\rd}(U; \E )$ 
is isomorphic to the dual of the $p$-th 
algebraic de Rham cohomology of the dual 
connection $\E^*$ of $\E$. 
In this paper, we use only some 
special integrable connections 
$( \E, \nabla )$ as the following example. 

\begin{example} 
Let $U \simeq \CC_x^*$ and $\E = \sho_U 
\exp (-h(x))x^{-c}$, where $h(x)=\sum_{i \in \ZZ} 
a_ix^i$ ($a_i \in \CC$) is a Laurent polynomial 
and $c \in \CC$. As usual we endow $\E = \sho_U 
\exp (-h(x))x^{-c}$ with the connection 
$\nabla : \E \longrightarrow \Omega_U^1 
\otimes_{\sho_U} \E$ defined by 
\begin{equation}
\nabla \{ f \exp (-h(x))x^{-c} \} 
=df \otimes \exp (-h(x))x^{-c} - 
(dh+ \frac{c}{x} dx ) \otimes f \exp (-h(x))x^{-c}
\end{equation}
for $f \in \sho_U$. Then we have 
$\LL = H^{-1} {\rm DR}_U( \E ) \simeq 
\CC_{U^{\an}} \exp (h(x))x^c 
\subset \sho_{U^{\an}}$. In this case, 
to define the rapid decay homology groups 
$H_p^{\rd}(U; \E )$ we consider 
(relative) twisted chains on which 
the function $\exp (h(x))x^c$ decays 
rapidly at infinity. 
\end{example} 
If $\E \simeq \sho_U ( \frac{1}{g})$ and 
we have an isomorphism $\LL =H^{-n} {\rm DR}_U( \E )
\simeq \CC_{U^{\an}}g$ ($\subset \sho_{U^{\an}}$) 
for a possibly multi-valued 
holomorphic function $g: U^{\an} 
\longrightarrow \CC$ as 
the example above, we call 
$H_p^{\rd}(U; \E )$ the $p$-th rapid decay 
homology group associated to 
the function $g$. 
In the special case where $g(x)=\exp (h(x)) g_0(x)$ 
for a meromorphic function $h$ on $Z^{\an}$ 
with poles in $D^{\an}$ and 
a (possibly multi-valued) function $g_0$ on $U^{\an}$ 
such that at each point of $Z^{\an}$ 
there exists a local coordinate $x=(x_1, 
\ldots, x_n)$ satisfying $g_0(x)= 
x_1^{c_1} \cdots x_n^{c_n}$ ($c_i \in \CC$) 
and $D= \{ x_1 \cdots x_k =0 \}$, we 
shall give a purely topological 
interpretation of $H_p^{\rd}(U; \E )$. 
Since $Z$ is a good compactification of $U$ 
for $\E$, the meromorphic function 
$h$ has no point of indeterminacy on 
the whole $Z^{\an}$ (see 
\cite[Section 2.1]{H-R}).  
By $\iota : U^{\an} \hookrightarrow \tilde{Z}$ 
we consider $U^{\an}$ as an open 
subset of $\tilde{Z}$ and set 
\begin{equation}
P= \tilde{D} \cap 
\overline{ \{ x \in U^{\an}  \ | \ 
{\rm Re} h(x) \geq 0 \} }. 
\end{equation}
Let $D=D_1 \cup \cdots \cup D_d$ be the 
irreducible decomposition of $D$. For 
$1 \leq i \leq d$ let $b_i \in \ZZ$ 
be the order of the meromorphic function 
$h$ along $D_i$. 
If $b_i \geq 0$ we say that the irreducible 
component $D_i$ is irrelevant. 
Namely along a relevant component $D_i$ 
the function $h$ has a pole of 
order $-b_i >0$. Denote 
by $D^{\prime}$ the union of the 
irrelevant components of $D$. 
Then we set $Q=\tilde{D} \setminus 
\{ P \cup \pi^{-1}(D^{\prime})^{\an} \}$. Note 
that $Q$ is an open subset of $\tilde{D}$ 
(i.e. the set of the rapid decay directions 
of the function $g$ in $\tilde{D}$). 

\begin{proposition}\label{RTH} 
In the situation as above (i.e. 
$\E^* = \sho_U \exp (h(x)) g_0(x)$), 
we have an isomorphism 
\begin{equation}
H_p^{\rd}(U ; \E ) \simeq 
H_p(U^{\an} 
 \cup Q, Q; \iota_*(\CC_{U^{\an}}g_0 ) )
\end{equation}
for any $p \in \ZZ$, 
where the right hand side is the $p$-th 
relative twisted homology group of the pair 
$(U^{\an} \cup Q, Q)$ with coefficients 
in the rank-one local 
system $\iota_*(\CC_{U^{\an}}g_0)$ 
on $\tilde{Z}$ (see \cite{A-K}). 
\end{proposition}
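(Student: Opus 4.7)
The plan is to show that the rapid decay condition on twisted relative chains with coefficients in $\iota_*\LL$ is in this concrete setup purely topological: it says exactly that the chain meets $\tilde{D}$ only inside the open subset $Q \subset \tilde{D}$. Once this is established, after identifying $\iota_*\LL$ with $\iota_*(\CC_{U^{\an}} g_0)$, the rapid decay chain complex becomes identified with the ordinary relative twisted singular chain complex of $(U^{\an} \cup Q, Q)$, and the proposition follows by passing to homology. For the coefficient identification, I would note that on $U^{\an}$ we have $\LL \simeq \CC_{U^{\an}} \cdot \exp(h) g_0$, and that multiplication by the single-valued nowhere-vanishing function $\exp(h)$ gives an isomorphism of rank-one local systems $\CC_{U^{\an}} g_0 \simto \LL$; since $\iota_*$ only sees sections on punctured neighborhoods in $U^{\an}$, this extends to an isomorphism $\iota_*(\CC_{U^{\an}} g_0) \simeq \iota_*\LL$ on all of $\tilde{Z}$.

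Next I would analyze the rapid decay condition at an arbitrary point $q \in \tilde{D}$. Fix local coordinates $(x_1, \ldots, x_n)$ near $\pi(q)$ with $D^{\an} = \{x_1 \cdots x_k = 0\}$, together with a good-lattice trivialization $(i_*\E)^{\an} \simeq \bigoplus_{i=1}^{r} \sho_{Z^{\an}}(*D^{\an}) e_i$. A horizontal section of $\E$ has coefficients $f_i$ of the form $\exp(h) \cdot (\text{meromorphic factor in } x_1, \ldots, x_k) \cdot g_0$, so the $f_i$ grow or decay at most polynomially in $|x_1|, \ldots, |x_k|$ apart from the $\exp(h)$ factor. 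Consequently the rapid decay condition $|f_i(x)| \leq C_N |x_1|^{N_1} \cdots |x_k|^{N_k}$ for all $N \in \NN^k$ is equivalent to $\exp({\rm Re}\, h(x))$ decaying faster than any polynomial along the approach to $q$. I would then distinguish three cases: if $q \in Q$, then $q$ lies over a relevant component with pole order $-b_i > 0$ and in a sector where ${\rm Re}\, h \to -\infty$, so $\exp({\rm Re}\, h)$ decays super-polynomially in $|x_i|$ and the rapid decay condition holds automatically; if $q \in P$, there is a sequence in $U^{\an}$ approaching $q$ along which ${\rm Re}\, h \geq 0$, so no chain approaching $q$ from such directions can be rapid decay; if $q \in \pi^{-1}(D^{\prime})^{\an}$, then $h$ is holomorphic at $\pi(q)$, so $\exp(h)$ is bounded and rapid decay again fails.

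Quotienting by chains entirely in $\tilde{D}$, I conclude that a relative twisted $p$-chain on $(\tilde{Z}, \tilde{D})$ with coefficients in $\iota_*\LL$ is rapid decay if and only if, up to a chain in $\tilde{D}$, its support meets $\tilde{D}$ only inside $Q$. Such relative chains are in natural bijection with relative singular chains on $(U^{\an} \cup Q, Q)$, and under the coefficient identification of the first paragraph this yields a quasi-isomorphism between the fine-sheaf complex $\Gamma(\tilde{Z}; \CAC^{\rd, \cdot}_{\tilde{Z}, \tilde{D}}(\iota_*\LL))$ and the singular chain complex computing $H_p(U^{\an} \cup Q, Q; \iota_*(\CC_{U^{\an}} g_0))$. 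The main obstacle is the local estimate in the second paragraph: one has to verify carefully that the good-lattice trivialization contributes only polynomial factors and that the polynomial growth/decay of $g_0$ cannot overpower the exponential factor $\exp({\rm Re}\, h)$ in either direction; this reduces to the explicit normal-crossings form of $h$ and $g_0$ built into the hypothesis.
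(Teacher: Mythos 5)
Your local analysis is on the right track (rapid decay of a horizontal section is governed by super-polynomial decay of $\exp({\rm Re}\,h)$, since the good-lattice trivialization and $g_0$ contribute only polynomially bounded factors), and the coefficient identification $\iota_*(\CC_{U^{\an}}g_0)\simeq \iota_*\LL$ via multiplication by the single-valued function $\exp(h)$ is exactly how the paper starts. But the central step — the claimed ``if and only if'' characterization and the resulting ``natural bijection'' of chain complexes — is false, and this is precisely where the real work of the proposition lies. A rapid decay chain need \emph{not} meet $\tilde{D}$ only inside $Q$: in the model $n=1$, $h(x)=x^{-m}$, a $1$-chain approaching a boundary point $q\in\partial Q\subset P$ tangentially from the $Q$-side, say along $\theta=\theta_0+r^{1/2}$ with $\cos(m\theta_0)=0$, satisfies ${\rm Re}\,h\sim -C r^{-1/2}\to-\infty$, so $\exp({\rm Re}\,h)$ decays faster than any power of $r$ and the chain is rapid decay, yet it touches $\tilde{D}$ at a point of $P$. (Your converse direction also misstates the irrelevant-component case: at $q\in\pi^{-1}(D')$ lying over an intersection with a relevant component, $h$ is not holomorphic at $\pi(q)$; what fails there is the required decay in the irrelevant coordinate directions.) Consequently the rapid decay complex is strictly larger than the complex of chains meeting $\tilde{D}$ only in $Q$, and the two sides of the proposition are related by a chain homotopy equivalence, not an isomorphism of complexes.

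The paper supplies exactly the missing ingredient: it introduces an $\e$-enlargement $Q^\e\supset\overline{Q}$ of the set of rapid decay directions and an auxiliary space $\tl{Z^{\e}}$ obtained by gluing $U^{\an}\cup Q^\e$ to $\tilde{D}$ along $Q^\e$, obtains a natural chain map from the rapid decay complex to $S_{-\cdot}(\tl{Z^{\e}},\tilde{D};\iota_*\LL)$, and constructs a homotopy inverse by smoothly deforming arbitrary chains in the angular directions $\theta_i=\arg x_i$ (via smooth vector fields on $\tilde{Z}$, patched by a partition of unity in the general case); excision and homotopy then replace $(\tl{Z^{\e}},\tilde{D})$ by $(U^{\an}\cup Q^\e,Q^\e)$ and finally by $(U^{\an}\cup Q,Q)$. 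To repair your argument you would need to prove such a deformation statement — e.g.\ that every rapid decay chain can be pushed, within the rapid decay complex, to one meeting $\tilde{D}$ only in a compact subset of $Q$, compatibly with boundaries — which is not a formality and is the actual content of the proof.
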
 
\begin{proof}
Since the function $\exp (h(x))$ is single-valued, 
we have an isomorphism $\LL \simeq 
\CC_{U^{\an}}g_0$. First let us consider the 
case $n=1$. Locally we may assume that 
$U= \CC^*$, $Z= \CC_x= \CC^* \sqcup \{ 0 \}$, 
$D= \{ x=0 \} = \{ 0 \} \subset Z$ and 
$h(x)=x^{-m}$ $(m>0)$. Let $\pi : \tilde{Z} 
\longrightarrow Z^{\an}$ be the real oriented 
blow-up of $Z^{\an}$ 
along $D^{\an}$. In this case we have 
$\tilde{D}= \pi^{-1}(D^{\an}) \simeq S^1$ 
and $\tilde{Z} = \{ (r, e^{\sqrt{-1} \theta}) \ | \ 
r \geq 0 \} \simeq 
[0, \infty ) \times S^1$. For $1 \leq i \leq m$ 
and sufficiently small $\e >0$ we set 
\begin{equation}
Q_i^{\e}=\{ e^{\sqrt{-1} \theta} \in \tilde{D} 
\simeq S^1 \ | \ 
\frac{(2i-\frac{3}{2}) \pi}{m}- \e 
< \theta < 
\frac{(2i-\frac{1}{2}) \pi}{m}+ \e \}, 
\end{equation}
and $Q^{\e}= \cup_{i=1}^m Q_i^{\e} 
\subset \tilde{D}$. 
Note that $Q^{\e} \subset \tilde{D}$ contains all the rapid 
decay directions of $g(x)=\exp (h(x)) g_0(x)$ 
in $\tilde{D}$. Now let us consider the two 
topological subspaces $U^{\an} \cup Q^{\e}$ 
and $\tilde{D}$ of $\tilde{Z}$. We 
patch them on their 
intersection $Q^{\e}$ and construct 
a new topological space 
as follows. By identifying the points 
of $Q^{\e} \subset U^{\an} \cup Q^{\e}$ and 
those of $Q^{\e} \subset \tilde{D}$ naturally, 
we obtain a quotient space 
$\tl{Z^{\e}}$ of the disjoint union 
$(U^{\an} \cup Q^{\e}) \sqcup \tilde{D}$. 
Recall that $\tl{Z^{\e}}$ is endowed with 
the strongest topology for which the quotient 
map $(U^{\an} \cup Q^{\e}) \sqcup \tilde{D} 
\longrightarrow \tl{Z^{\e}}$ is continuous. 
Note also that $\tilde{D}$ is naturally 
identified with a close subspace of 
$\tl{Z^{\e}}$. We denote the local 
system on $\tl{Z^{\e}}$ naturally 
constructed from $\iota_* \LL$ by 
the same letter $\iota_* \LL$. 
For $p \in \ZZ$ let 
$S_p( \tl{Z^{\e}}, \tilde{D} ; \iota_* \LL )$ 
be the $\CC$-vector space of the 
twisted (piecewise smooth) relative $p$-chains 
on the pair $(\tl{Z^{\e}}, \tilde{D})$ 
with coefficients in  
$\iota_* \LL$. Then by the definition of 
rapid decay chains, for any $p \in \ZZ$ 
we obtain a natural morphism 
\begin{equation}
\Gamma (\tilde{Z} ; \CAC^{\rd ,-p}_{\tilde{Z}, 
\tilde{D}}( \iota_* \LL ) ) \longrightarrow 
S_p( \tl{Z^{\e}}, \tilde{D} ; \iota_* \LL ). 
\end{equation}
We can easily show that the chain map 
\begin{equation}
\Gamma (\tilde{Z} ; \CAC^{\rd , \bullet}_{\tilde{Z}, 
\tilde{D}}( \iota_* \LL ) ) \longrightarrow 
S_{- \bullet}( \tl{Z^{\e}}, \tilde{D} ; \iota_* \LL ) 
\end{equation}
obtained in this way 
is a homotopy equivalence. 
Indeed, its homotopy inverse $S_{- \bullet}( 
\tl{Z^{\e}}, \tilde{D} ; \iota_* \LL ) 
\longrightarrow \Gamma (\tilde{Z} ; 
\CAC^{\rd , \bullet}_{\tilde{Z}, 
\tilde{D}}( \iota_* \LL ) )$ can be constructed 
by smooth deformations of chains in 
$S_{- \bullet}( 
\tl{Z^{\e}}, \tilde{D} ; \iota_* \LL )$ 
in the angular direction $\theta = 
{\rm arg} x$. We can construct them by a smooth 
vector field on $\tilde{Z}$. 
Hence we obtain an isomorphism 
\begin{equation}\label{EQ-1} 
H^{-p} \Gamma 
(\tilde{Z} ; \CAC^{\rd , \bullet}_{\tilde{Z}, 
\tilde{D}}( \iota_* \LL ) ) \simto 
H_p( \tl{Z^{\e}} , \tilde{D} ; \iota_* \LL )
\end{equation}
for any $p \in \ZZ$. 
Moreover by excision and 
homotopy, we have an isomorphism 
\begin{equation}\label{EQ-2} 
H_p( \tl{Z^{\e}} , \tilde{D} ; \iota_* \LL ) 
\simto 
H_p( U^{\an} \cup Q^{\e}, Q^{\e} ; \iota_* \LL ) 
\end{equation}
for any $p \in \ZZ$. Combining \eqref{EQ-1} 
with \eqref{EQ-2}, we obtain an isomorphism 
\begin{equation} 
H^{-p} \Gamma 
(\tilde{Z} ; \CAC^{\rd , \bullet}_{\tilde{Z}, 
\tilde{D}}( \iota_* \LL ) )
\simto  
H_p( U^{\an} \cup Q^{\e}, Q^{\e} ; \iota_* \LL ) 
\end{equation}
for any $p \in \ZZ$. 

\medskip \par 
Finally let 
us consider the case $n \geq 2$. First 
we assume that for some $1 \leq k \leq n$ 
we have $Z= \CC^n_x$, 
$D= \{ x_1 \cdots x_k =0 \} \subset Z$, 
$U=Z \setminus D$ and 
$h(x)=x_1^{-m_1} \cdots x_k^{-m_k}$ 
$(m_i>0)$.  Let $\pi : \tilde{Z} 
\longrightarrow Z^{\an}$ be the real oriented 
blow-up of $Z^{\an}$ along $D^{\an}$. 
In this case we have $\tilde{Z}=
\{ ( \{ (r_i, e^{\sqrt{-1} \theta_i}) \}_{i=1}^k, 
x_{k+1}, \ldots, x_n) \ | \ r_i \geq 0 \} 
\simeq 
([0, \infty ) \times S^1)^k \times \CC^{n-k}$. 
For sufficiently small $\e >0$ we define 
an open subset $Q^{\e} \subset \tilde{D}$ by 
$( \{ (r_i, e^{\sqrt{-1} \theta_i}) \}_{i=1}^k, 
x_{k+1}, \ldots, x_n)  \in  Q^{\e}$ 
$\Longleftrightarrow$ 
${\rm Re} \ 
e^{\sqrt{-1} (m_1 \theta_1+ \cdots +m_k \theta_k)} 
  <  \e | {\rm Im} \ 
e^{\sqrt{-1} (m_1 \theta_1+ \cdots +m_k \theta_k)} |$ 
for $( \{ (r_i, e^{\sqrt{-1} \theta_i}) \}_{i=1}^k, 
x_{k+1}, \ldots, x_n) \in \tilde{D}$. 
Then $Q^{\e}$ contains all the rapid 
decay directions of 
$g(x)=\exp (h(x)) g_0(x)$ 
in $\tilde{D}$. As in the case $n=1$, 
by smooth deformations of chains and excision 
etc. we obtain an isomorphism 
\begin{equation} 
H^{-p} \Gamma 
(\tilde{Z} ; \CAC^{\rd , \bullet}_{\tilde{Z}, 
\tilde{D}}( \iota_* \LL ) ) \simto  
H_p( U^{\an} \cup Q^{\e}, Q^{\e} ; \iota_* \LL )
\end{equation}
for any $p \in \ZZ$. The general case can be 
proved similarly by patching 
local smooth deformations of chains 
(smooth vector fields) as above by 
a partition of unity. 
This completes the proof. 
\end{proof}

The following lemma will be used in 
Section \ref{sec:4}. 
\begin{lemma}\label{EC} 
In the situation of Proposition \ref{RTH}, 
for a point $q \in D^{\an}$ 
let $k \geq 0$ be the number of 
the relevant irreducible components of $D^{\an}$ 
passing through $q$. Assume that $k \geq 2$. 
Then for a small open neighborhood $V$ of $q$ 
in $Z^{\an}$ we have 
\begin{equation}\label{Eule} 
\sum_{p \in \ZZ} (-1)^p \dim 
H_p( (V \cap U^{\an}) \cup ( \pi^{-1}(V) \cap Q) , 
( \pi^{-1}(V) \cap Q) ; \iota_*(\CC_{U^{\an}}g_0 ) )
=0.
\end{equation}
\end{lemma}
\begin{proof}
The problem being local, 
we may assume that for some $k \leq l \leq n$ 
we have $V=Z= \CC^n_x$, 
$D= \{ x_1 \cdots x_l =0 \} \subset Z$, 
$U=Z \setminus D$ and $q=0 \in D^{\an}$. 
For simplicity here we consider the case where $k=l$ 
and $h(x)=x_1^{-m_1} \cdots x_k^{-m_k}$ 
$(m_i>0)$.  Then we have the product decomposition 
$\tilde{Z}=
\{ ( \{ (r_i, e^{\sqrt{-1} \theta_i}) \}_{i=1}^k, 
x_{k+1}, \ldots, x_n) \ | \ r_i \geq 0 \} 
\simeq 
([0, \infty ) \times S^1)^k \times \CC^{n-k}$ 
of $\tilde{Z}$ and by the projection 
$p_1 : \tilde{Z} 
\longrightarrow S^1$ defined by 
$( \{ (r_i, e^{\sqrt{-1} \theta_i}) \}_{i=1}^k, 
x_{k+1}, \ldots, x_n) \longmapsto 
e^{\sqrt{-1} \theta_1}$ the two manifolds 
$U^{\an}, Q \subset \tilde{Z}$ are fiber 
bundles over $S^1$. Let $S^1= \cup_{i=1}^r I_i$ 
be an open covering of $S^1$ such that the 
restrictions $p_1^{-1}(I_i) \cap U^{\an} 
\longrightarrow I_i$ and $p_1^{-1}(I_i) \cap Q 
\longrightarrow I_i$ of the above two 
fiber bundles to $I_i
\subset S^1$ are isomorphic 
to the trivial ones over $I_i$ for 
any $1 \leq i \leq r$. Then by 
the Mayer-Vietoris exact sequences for the relative 
twisted homology groups 
\begin{equation}
H_p( (p_1^{-1}(I_i) \cap U^{\an}) 
\cup ( p_1^{-1}(I_i) \cap Q) , 
( p_1^{-1}(I_i) \cap Q) ; \iota_*(\CC_{U^{\an}}g_0 ) ), 
\end{equation}
in the calculation of the left hand side 
of \eqref{Eule} we can replace the 
two fiber bundles by the trivial ones. 
By the K\"{u}nneth formula, the assertion 
follows immediately from the fact that 
the Euler characteristic of the circle $S^1$ 
is zero.  
\end{proof} 
In the sequel, we consider the more special 
case where $U=\CC^*_x$ and 
$( \E, \nabla )$ is an 
integrable connection on $U$ such that 
$\E^* = \sho_U \exp (h(x))x^c$ and 
$\LL = H^{-1} {\rm DR}_U( \E ) \simeq 
\CC_{U^{\an}} \exp (h(x))x^c$ for a 
Laurent polynomial $h(x)=\sum_{i \in \ZZ} 
a_ix^i$ ($a_i \in \CC$) and $c \in \CC$. 
Then we can take the projective 
line $\PP$ to be the good compactification $Z$ of 
$U=\CC^*_x$ for $( \E, \nabla )$. 
In this case, we have 
$D=Z \setminus U=D_1 \cup D_2$, where 
we set $D_1=\{ 0 \}$ and $D_2=\{ \infty \}$. 
For the real oriented 
blow-up $\pi : \tilde{Z} 
\longrightarrow Z^{\an}$ of $Z^{\an}$ 
the subset $\tilde{D} = 
\pi^{-1}(D^{\an})$ of $\tilde{Z}$ 
is a union of two circles 
$\tl{D_i}:=\pi^{-1}(D_i^{\an}) 
\simeq S^1$ ($i=1,2$). Moreover 
the open subset $Q \subset \tilde{D}$ 
is a union of open intervals in 
$\tl{D_1} \cup \tl{D_2} \simeq 
S^1 \cup S^1$. Let $NP(h) \subset \RR$ 
be the Newton polytope of $h$ i.e. the 
convex hull of the set $\{ i \in \ZZ \ | \ 
a_i \not= 0 \}$ in $\RR$. Finally denote 
by $\Delta \subset \RR$ the convex hull 
of $NP(h) \cup \{ 0 \}$ in $\RR$. 
Then by Proposition \ref{RTH} and 
Mayer-Vietoris exact sequences for relative 
twisted homology groups 
we can easily prove the following result. 

\begin{lemma}\label{ECL} 
In the situation as above (i.e. $U= \CC^*_x$ and 
$\E^* = \sho_U \exp (h(x))x^c$), we have 
\par \noindent (i) The dimension of 
the rapid decay homology group 
$H_p^{\rd}(U ; \E )$ is $\Vol_{\ZZ} 
(\Delta )$ if $p=1$ and zero otherwise. 
\par \noindent (ii) Assume that $\Delta =[-m, 0]$ 
(resp. $\Delta =[0, m]$) for some $m>0$. 
Then $Q \subset \tilde{D}$ is a union of 
$m$ open intervals $Q_1, Q_2, \ldots, Q_m$ 
in $\tl{D_1} \simeq S^1$ (resp. in 
$\tl{D_2} \simeq S^1$) and the first 
rapid decay homology group 
$H_1^{\rd}(U ; \E )$ has a basis formed by 
the $m$ elements 
\begin{equation}
[\gamma_i] \in H_1^{\rd}(U ; \E ) 
\qquad (i=1,2, \ldots, m), 
\end{equation}
where $\gamma_i$ is a $1$-dimensional 
twisted chain with values in $\iota_* \LL$ 
as in Figure $1$ below starting from a point in $Q_i$ and 
going directly to that in $Q_{i+1}$ 
(here we set $Q_{m+1}=Q_1$). 
\par \noindent (iii) Assume that 
$\Delta =[-m_1, m_2]$ for some $m_1, m_2 >0$. 
Then $Q \subset \tilde{D}$ is a union of 
open intervals $Q_1, Q_2, \ldots, Q_{m_1}$ 
in $\tl{D_1} \simeq S^1$ and the ones 
$Q_1^{\prime}, Q_2^{\prime}, 
\ldots, Q_{m_2}^{\prime}$ 
in $\tl{D_2} \simeq S^1$. 
If moreover $c \notin \ZZ$, then the first 
rapid decay homology group 
$H_1^{\rd}(U ; \E )$ has a basis formed by 
the $m_1+m_2$ elements 
\begin{equation}
[\gamma_i] \in H_1^{\rd}(U ; \E ) 
\qquad (i=1,2, \ldots, m_1) 
\end{equation}
and 
\begin{equation}
[\gamma_i^{\prime}] \in H_1^{\rd}(U ; \E ) 
\qquad (i=1,2, \ldots, m_2), 
\end{equation}
where $\gamma_i$ (resp. $\gamma_i^{\prime}$) 
is a $1$-dimensional 
twisted chain with values in $\iota_* \LL$ 
starting from a point in $Q_i$ (resp. 
$Q_i^{\prime}$) and 
going directly to that in $Q_{i+1}$ 
(resp. $Q_{i+1}^{\prime}$). 
\end{lemma}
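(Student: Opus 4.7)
The plan begins by invoking Proposition \ref{RTH}, which identifies
\[
H_p^{\rd}(U; \E) \simeq H_p(U^{\an}\cup Q, Q;\iota_*(\CC_{U^{\an}}x^c))
\]
since $g_0(x)=x^c$ here. All further computation takes place on the oriented real blow-up $\tilde Z=[0,1]\times S^1$ of $Z=\PP$, with boundary circles $\tl{D_1}$ over $0$ and $\tl{D_2}$ over $\infty$. Writing $x=re^{\sqrt{-1}\theta}$, the rapid decay of $\exp h(x)$ along $\tl{D_1}$ (when $-m_1=\min NP(h)<0$) is controlled by the lowest monomial $a_{-m_1}x^{-m_1}$, so $Q\cap \tl{D_1}$ is the set $\{\mathrm{Re}(a_{-m_1}e^{-\sqrt{-1}m_1\theta})<0\}$, a disjoint union of $m_1$ equally-spaced open arcs; similarly $Q\cap\tl{D_2}$ consists of $m_2$ arcs. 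This yields the arc enumerations asserted in (ii) and (iii).

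To prove (i), I would compute the relative twisted homology via Mayer--Vietoris on the open cover $A=U^{\an}$, $B=$ a small open neighborhood of $Q$ retracting to $Q$, combined with the long exact sequence of the pair $(U^{\an}\cup Q, Q)$. Since $B$ and $A\cap B$ are disjoint unions of contractible pieces on which $\iota_*\LL$ is trivializable, only $H_*(A)=H_*(\CC^*;\CC x^c)$ contributes non-trivially, and a short diagram chase yields $\dim H_1^{\rd}(U;\E)=\Vol_\ZZ(\Delta)$ with vanishing in other degrees, regardless of whether $c\in\ZZ$.

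For the explicit bases in (ii) and (iii), the key step is to compute the images of the given cycles under the connecting map $\partial\colon H_1(\mathrm{rel})\to H_0(Q;\iota_*\LL)$. A direct calculation gives $\partial[\gamma_i]=\alpha_i\,e_{i+1}-e_i$, where $\alpha_i$ is the parallel transport of $x^c$ along the chord from $Q_i$ to $Q_{i+1}$, and the cyclic product $\prod_i\alpha_i$ on a single boundary circle equals the total monodromy $e^{2\pi\sqrt{-1}c}$. The resulting circulant matrix has determinant $(-1)^{m}(1-e^{2\pi\sqrt{-1}c})$. Under the non-trivial monodromy hypothesis of (iii) this is non-zero on each boundary block, so the $\gamma_i,\gamma'_j$ map bijectively onto $H_0(Q;\iota_*\LL)$ and form a basis. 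In (ii) the same argument works when $c\notin\ZZ$; when $c\in\ZZ$ the circulant drops rank by one, but $H_1(U^{\an}\cup Q;\CC)$ is then one-dimensional and $\sum_i\gamma_i$ is precisely its generator (the loop around $0$), supplying the missing independent class and making $\{\gamma_1,\ldots,\gamma_m\}$ a basis of $H_1^{\rd}(U;\E)$.

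The main obstacle is the parallel-transport bookkeeping that yields the circulant shape of $\partial$ with the clean determinant $(-1)^m(1-e^{2\pi\sqrt{-1}c})$, together with the verification that in the trivial-monodromy subcase of (ii) the sum $\sum_i\gamma_i$ is exactly the loop generator of $H_1(U^{\an}\cup Q;\CC)$ and not zero in the relative group; both reduce to careful tracking of the chosen branch of $x^c$ along each chord, after which the linear algebra of cyclic matrices finishes the proof.
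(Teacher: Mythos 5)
Your proposal is correct and follows exactly the route the paper indicates (Proposition \ref{RTH} to reduce to the relative twisted homology of $(U^{\an}\cup Q,Q)$, then the long exact sequence of the pair together with Mayer--Vietoris); the paper itself gives no further details. Your explicit analysis of the connecting map $\partial\colon H_1(\mathrm{rel})\to H_0(Q;\iota_*\LL)$ via the circulant with determinant $\pm(1-e^{2\pi\sqrt{-1}c})$, and in particular your separate treatment of the $c\in\ZZ$ subcase of (ii) using that $\sum_i[\gamma_i]$ closes up (rel $Q$) to the generator of $H_1(U^{\an}\cup Q;\CC)$, correctly supplies the one point the paper's sketch leaves implicit.
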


\begin{center}
\includegraphics[height=6cm]{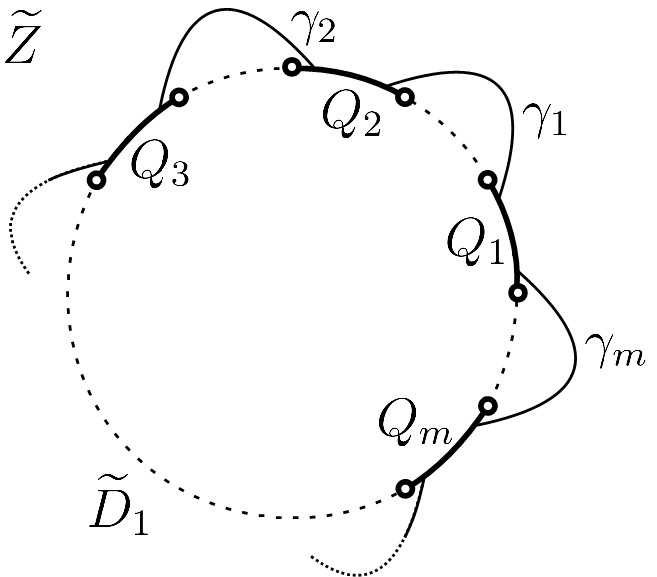}

Figure 1.
\end{center}


\section{A geometric construction of 
integral representations}\label{sec:4}

In this section we give a 
geometric construction of 
Adolphson's confluent $A$-hypergeometric 
$\D$-module $\M_{A, c}$ and apply it to 
obtain the integral representations of 
$A$-hypergeometric functions. 
Let $Y= \CC_{\zeta}^{N}$ be 
the dual vector space of 
$X= \CC^A=\CC_z^{N}$, where $\zeta$ is the dual 
coordinate of $z$. As in \cite{G-K-Z-2}, 
to $A \subset \ZZ^n$ we associate a 
morphism 
\begin{equation}
j: T=(\CC^*)^n_x \longrightarrow  
Y=\CC_{\zeta}^{N}
\end{equation}
defined by $x \longmapsto 
(x^{a(1)}, x^{a(2)}, \ldots, x^{a(N)})$. 
Since we assume here that 
$A$ generates $\ZZ^n$, $j$ is an embedding. 
Let $I \subset \CC [\zeta_1, \ldots, \zeta_N]$ 
be the defining ideal of the closure 
$\overline{j(T)}$ of 
$j(T) \subset Y$ in $Y$. Moreover denote by 
$D(Y)$ the Weyl algebra over $Y$. 
Then we have a ring isomorphism 
\begin{equation}
 \GFT : D(X) \simto D(Y) 
\end{equation}
defined by 
\begin{equation}
(\frac{\partial}{\partial z_j})^{ \GFT} 
=\zeta_j, 
\qquad 
(z_j)^{ \GFT}
= - \frac{\partial}{\partial \zeta_j} 
\qquad (j=1,2, \ldots, N). 
\end{equation}
We call $\GFT$ the Fourier transform 
(see Malgrange 
\cite{Malgrange} for the details). 
Via this $\GFT$, the Adolphson's system 
$M_{A,c}$ is 
transformed to the one 

\begin{equation}\label{DSY} 
(Z_{i,c})^{ \GFT} v(\zeta )  =  0 
\quad (1 \leq i \leq n), \qquad 
f(\zeta ) v(\zeta )  =  0 
\quad (f \in I) 
\end{equation}
on $Y= \CC_{\zeta}^{N}$. Note that this 
system has no non-zero 
holomorphic solution in general. Let 
\begin{equation}
N_{A, c}=M_{A,c}^{\GFT}=
D(Y) / \left(\sum_{1 \leq i \leq n} 
D(Y) (Z_{i,c})^{\GFT} + \sum_{f \in I} D(Y) f \right) 
\end{equation} 
be the corresponding left $D(Y)$-module 
and $\N_{A, c}$ the coherent $\D_Y$-module 
associated to it. 
Now on the algebraic torus $T=(\CC^*)^n_x$ 
we define a holonomic $\D_T$-module $\R_c$ by 
\begin{equation}
\R_c= 
\D_T / \sum_{1 \leq i \leq n} \D_T 
\left\{ x_i \frac{\partial}{\partial x_i}+
(1-c_i) \right\} 
\simeq \sho_T x_1^{c_1-1} \cdots x_n^{c_n-1}.  
\end{equation} 
This is an integrable 
connection on $T$ and we have 
\begin{equation}
{\rm DR}_T (\R_c ) \simeq 
(\CC_{T^{\an}} 
x_1^{-c_1+1} \cdots x_n^{-c_n+1})[n].  
\end{equation} 
Let $v=[1] \in \N_{A,c}$ and $w_0 =[1] \in 
\R_c$ be the canonical generators. Recall that 
the transfer bimodule $\D_{T \longrightarrow Y}$ 
has the canonical section 
$1_{T \longrightarrow Y}$. We define 
a section $1_{Y \longleftarrow T}$ of 
$\D_{Y \longleftarrow T}= \Omega_T 
\otimes_{\sho_T} \D_{T \longrightarrow Y} 
\otimes_{j^{-1} \sho_Y} 
j^{-1} \Omega_Y^{\otimes -1}$ by 
\begin{equation}
1_{Y \longleftarrow T}
=(dx_1 \wedge \cdots \wedge dx_n) \otimes 
1_{T \longrightarrow Y} \otimes 
j^{-1}(d \zeta_1 \wedge \cdots \wedge 
 d \zeta_N)^{\otimes -1}. 
\end{equation} 
Note that this definition of 
$1_{Y \longleftarrow T}$ depends 
on the coordinates of $Y$ and $T$. 
Then we obtain a section $w$ of 
the regular holonomic $\D_Y$-module 
\begin{equation}
\CS_{A, c}:= \int_j \R_c = 
j_*(\D_{Y \longleftarrow T} \otimes_{\D_T} 
\R_c ) 
\end{equation} 
defined by $w=j_*(
1_{Y \longleftarrow T} \otimes w_0)$. 
We can easily check that this section 
$w \in \CS_{A,c}$ satisfies the system 
\eqref{DSY}. Hence as in 
\cite[page 268-269]{G-K-Z-2}, 
we obtain a morphism 
\begin{equation}
\Psi : \N_{A, c} \longrightarrow 
\CS_{A, c}= \int_j \R_c 
\end{equation} 
of left $\D_Y$-modules which sends 
the canonical generator $v=[1] \in \N_{A,c}$ 
to $w \in \CS_{A,c}$. 

\begin{definition}\label{NRC} 
(Gelfand-Kapranov-Zelevinsky 
\cite[page 262]{G-K-Z-2}) 
For a face $\Gamma$ of $\Delta$ 
containing the origin $0 \in \RR^n$ we 
denote by $\Lin (\Gamma ) \subset \CC^n$ 
the $\CC$-linear span of $\Gamma$. 
We say that the parameter vector 
$c \in \CC^n$ is nonresonant 
(with respect to $A$) if 
for any face $\Gamma$ of $\Delta$ 
of codimension $1$ such that 
$0 \in \Gamma$  
we have $c \notin \{ \ZZ^n+ 
\Lin (\Gamma ) \}$. 
\end{definition} 
Recall that if $c \in \CC^n$ is nonresonant 
then it is semi-nonresonant 
in the sense of \cite[page 284]{A}. 
The following result was proved by Saito \cite{S1} 
and Schulze-Walther \cite{SW1}, \cite{SW2} 
by using commutative algebras (see also 
Beukers \cite{B-0} for another approach 
to this problem). Here we 
give a geometric proof to it.  

\begin{lemma}\label{IRR} 
Assume that the parameter vector 
$c \in \CC^n$ is nonresonant. Then 
the regular holonomic $\D_Y$-module $\CS_{A,c}$ 
is irreducible. 
\end{lemma}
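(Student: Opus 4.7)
The plan is to show that any nonzero $\D_Y$-submodule $\M'$ of $\CS_{A,c}=\int_j \R_c$ must equal $\CS_{A,c}$. The strategy proceeds in four steps.

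First, I would observe that, since $j$ is a locally closed embedding and $\R_c$ is a rank-one integrable connection on $T$, the module $\CS_{A,c}$ is set-theoretically supported on $V:=\overline{j(T)}$ and its restriction to the open torus orbit $j(T)\simeq T$ is exactly $\R_c$. Because $\R_c$ is irreducible, any nonzero $\D_Y$-submodule $\M' \subset \CS_{A,c}$ satisfies either $\M'|_T = \R_c$ or $\M'$ is supported on $V\setminus T$; by duality, the analogous dichotomy holds for quotients. Hence reducibility of $\CS_{A,c}$ forces the existence of a nonzero sub- or quotient-module supported on $V\setminus T$, and it suffices to rule this out.

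Second, I would introduce a smooth projective toric compactification $Z$ of $T$ whose fan refines the normal fan of $\Delta$, following \cite{L-S} and \cite{M-T-3}. Then every monomial $x^{a(j)}$ extends to a regular function on $Z$, so $j$ extends to a proper morphism $\bar j:Z\to Y$, and $Z\setminus T$ is a simple normal crossing divisor with components indexed by the rays of the fan. The codimension-one torus orbits of $V$ correspond to facets of the cone generated by $A$, which are in bijection with codimension-one faces $\Gamma$ of $\Delta$ containing $0$. For each such $\Gamma$, there is a distinguished ray $\rho=\rho(\Gamma)$ whose primitive integer generator $v_\rho$ spans $\Lin(\Gamma)^\perp$, and $\bar j$ maps the corresponding divisor $E_\rho\subset Z$ to the codimension-one orbit of $V$ associated to $\Gamma$.

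Third, I would perform a local residue analysis at a generic point of $E_\rho$. Choosing toric local coordinates $(y_1,\ldots,y_n)$ adapted to $\rho$ with $E_\rho=\{y_1=0\}$, one has $x_1^{c_1-1}\cdots x_n^{c_n-1}=y_1^{\langle c,v_\rho\rangle}\cdot u(y)$ with $u$ a unit, so the monodromy of the horizontal section of $\R_c$ around $E_\rho$ is $\exp(-2\pi i\langle c,v_\rho\rangle)$. Because $v_\rho$ is primitive integer and orthogonal to $\Lin(\Gamma)$, one checks that $\langle c,v_\rho\rangle\in\ZZ$ if and only if $c\in\ZZ^n+\Lin(\Gamma)$, which is precisely excluded by the nonresonance hypothesis in Definition \ref{NRC}. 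Thus the local monodromy of $\R_c$ around every boundary divisor of $V$ is nontrivial.

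Fourth, I would invoke the standard consequence of Deligne's theory: for a rank-one integrable connection on $T$ whose monodromy around each boundary divisor of a (normal-crossing) compactification is nontrivial, the $*$- and $!$-pushforwards agree and coincide with the minimal extension $j_{!*}\R_c$, which is irreducible since $\R_c$ is. Pulling this back to $\CS_{A,c}$ via $\bar j$ and using Kashiwara's support argument for the closed embedding $V\hookrightarrow Y$, one concludes that $\CS_{A,c}$ admits no nonzero sub or quotient supported on $V\setminus T$. Combined with Step 1, this yields the irreducibility of $\CS_{A,c}$. The main obstacle is Step 3: establishing the precise dictionary between the combinatorial nonresonance condition $c\notin\ZZ^n+\Lin(\Gamma)$ and the analytic condition that the residue of $\R_c$ along the corresponding toric boundary divisor $E_{\rho(\Gamma)}$ is a non-integer, using the chosen toric compactification. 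Once this is in place, Step 4 is formal from minimal-extension theory and Steps 1--2 are set-up.
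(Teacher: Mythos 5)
Your overall strategy agrees with the paper's: both reduce irreducibility of $\CS_{A,c}=\int_j\R_c$ to showing that $\CS_{A,c}$ is the minimal extension of $\R_c$, i.e.\ that the canonical map $j_!\LL\to Rj_*\LL$ (for $\LL=\CC_{T^{\an}}x_1^{c_1-1}\cdots x_n^{c_n-1}$) is an isomorphism. But your execution has a genuine gap, in two places. First, the geometric setup of Step 2 is not available: a smooth \emph{projective} toric compactification $Z$ of $T$ cannot carry a morphism $\bar j:Z\to Y=\CC^N$ extending $j$, since that would make every monomial $x^{a(j)}$ a global regular function on a complete variety. What does exist is a proper toric modification of the affine toric variety $\overline{j(T)}\simeq\Spec(\CC[S(A)])$, coming from a smooth subdivision of the cone dual to $K(A)$; but such a subdivision generally introduces rays beyond those dual to the codimension-one faces $\Gamma\ni 0$ of $\Delta$, and nonresonance gives no control on $\langle c,v_\rho\rangle\bmod\ZZ$ for those extra rays, so the hypothesis of your "Deligne theory" step ($i_!\LL\simeq Ri_*\LL$ on the whole modification) is not secured.

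Second, and more seriously, Step 4 is not "formal": nontrivial monodromy around the divisors lying over the \emph{codimension-one} orbits of $\overline{j(T)}$ does not imply $j_!\LL\simeq Rj_*\LL$. One must verify the vanishing of the stalks of $Rj_*\LL$ at every point of $\overline{j(T)}\setminus j(T)$, in particular on the orbits of codimension $\geq 2$, where the resolution has positive-dimensional fibers and the local computation is not governed by divisor residues. At the $0$-dimensional orbit (the apex of $K(A)$) the stalk of $Rj_*\LL$ is identified with the \emph{global} cohomology $H^*(T^{\an};\LL)$ via a conical retraction, and at a $d$-dimensional orbit $O$ it reduces, through a local product decomposition $W\times O$ with $j(T)=(T_1\sqcup\cdots\sqcup T_k)\times O$, to the cohomology of the restriction of $\LL$ to $(n-d)$-dimensional subtori $T_i$; nonresonance is used precisely to make these restrictions nonconstant so that their cohomology vanishes. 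This analysis of the deep strata — the scaling function $\varphi$, the $\RR_{>0}$-action on $Y$, and the orbit-by-orbit product structure — is the bulk of the paper's proof and is entirely absent from your proposal. Your residue computation in Step 3 (the equivalence $\langle c,v_\rho\rangle\in\ZZ\Leftrightarrow c\in\ZZ^n+\Lin(\Gamma)$ for $v_\rho$ primitive and orthogonal to $\Lin(\Gamma)$) is correct, but it only handles the codimension-one part of the boundary.
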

\begin{proof} 
Note that ${\rm DR}_T (\R_c ) \simeq 
(\CC_{T^{\an}} 
x_1^{-c_1+1} \cdots x_n^{-c_n+1})[n]$ is 
an irreducible perverse sheaf on 
$T^{\an}$. Then also its minimal extension by 
the locally closed embedding $j$ is 
irreducible (see 
\cite[Corollary 8.2.10]{H-T-T}). 
As in \cite[Theorem 3.5 and 
Propositions 3.2 and 4.4]{G-K-Z-2} it 
suffices to show that the canonical 
morphism 
\begin{equation}
j_!(\CC_{T^{\an}} x_1^{c_1-1} \cdots x_n^{c_n-1}) 
\longrightarrow 
Rj_* (\CC_{T^{\an}} x_1^{c_1-1} 
\cdots x_n^{c_n-1}) 
\end{equation} 
is a quasi-isomorphism. 
For this, we have only to prove the 
vanishing $Rj_* (\CC_{T^{\an}} x_1^{c_1-1} 
\cdots x_n^{c_n-1})_q \simeq 0$ 
for any $q \in \overline{j(T)} \setminus j(T)$. 
Note that by the nonresonance of $c \in \CC^n$ 
for any $p \in \ZZ$ and the local system $\LL 
 :=\CC_{T^{\an}} x_1^{c_1-1} 
\cdots x_n^{c_n-1}$ on $T^{\an}$ we have 
$H^p(T^{\an} ; \LL )=0$. Let $S(A) \subset \ZZ^n$ 
(resp. $K(A) \subset \RR^n$) be the 
semigroup (resp. the convex cone) generated 
by $A$. Then by (the proof of) 
\cite[Chapter 5, Theorem 2.3]{G-K-Z}  
we have $\overline{j(T)} \simeq \Spec (\CC [S(A)])$. 
Let us define an action of $T$ on $Y= \CC^N_{\zeta}$ by 
\begin{equation}
(\zeta_1, \ldots, \zeta_N) \longmapsto 
(x^{a(1)} \zeta_1, \ldots, x^{a(N)} \zeta_N) 
\end{equation}
for $x \in T$. Then by 
\cite[Chapter 5, Theorem 2.5]{G-K-Z} there 
exists a natural bijection 
between the faces of $K(A)$ and the $T$-orbits 
in $\overline{j(T)}$. In particular, if 
$K(A)= \RR^n$ we have $\overline{j(T)} =
j(T)$ and there is nothing to prove. 
First consider the 
case where $0 \in \RR^n$ is an apex of 
$K(A)$ and $q=0 \in Y=\CC^N_{\zeta}$. If 
$0 \in A$ i.e. $0=a(j)$ for some 
$1 \leq j \leq N$ we have $\overline{j(T)} 
\subset \{ \zeta_j=1 \} \simeq \CC^{N-1}$. 
Hence we may assume that $0 \notin A$ 
from the start. In this case, $\{ 0 \} 
\subset \overline{j(T)}$ is the unique 
$0$-dimensional $T$-orbit in 
$\overline{j(T)}$ which corresponds to 
$\{ 0 \} \prec K(A)$. From now on, we 
will prove that $Rj_*( \LL )_0 \simeq 0$. 
By our assumption there exists a 
linear function $l: \RR^n \longrightarrow 
\RR$ such that $l( \ZZ^n) \subset \ZZ$ 
and $K(A) \setminus \{ 0 \} \subset 
\{ l>0 \}$.  We define a real-valued function 
$\varphi : Y=\CC^N_{\zeta} \longrightarrow 
\RR$ by 
\begin{equation}
\varphi (\zeta )=|\zeta_1|^{\frac{C}{l(a(1))}} 
+ \cdots + |\zeta_N|^{\frac{C}{l(a(N))}}, 
\end{equation}
where we take $C \in \ZZ_{>0}$ large enough 
so that $\varphi$ and its level 
sets $\varphi^{-1} (b)$ ($b>0$) are smooth. Let 
$(l_1,l_2, \ldots, l_n) \in \ZZ^n$ be the 
coefficients of the linear function 
$l$. Define an action of the multiplicative 
group $\RR_{>0}$ on $T$ by 
\begin{equation}
r \cdot (x_1, \ldots, x_n)= 
(r^{l_1}x_1, \ldots, r^{l_n}x_n) 
\end{equation}
for $r \in \RR_{>0}$. Then we have 
\begin{equation}
j(r \cdot x)=(r^{l(a(1))}x^{a(1)}, \ldots, 
r^{l(a(N))}x^{a(N)}) 
\end{equation}
and hence 
\begin{equation}
\varphi (j(r \cdot x))= r^C \varphi (j(x)). 
\end{equation}
Therefore by the action of $\RR_{>0}$ on 
$Y= \CC^N_{\zeta}$ defined by 
\begin{equation}
r \cdot (\zeta_1, \ldots, \zeta_N)= 
(r^{l(a(1))} \zeta_1, \ldots, r^{l(a(N))} \zeta_N), 
\end{equation}
a level set $\varphi^{-1}(t)$ ($t>0$) of 
$\varphi$ is sent to the one 
$\varphi^{-1}(r^C t)$. Moreover this action 
preserves the $T$-orbits in $\overline{j(T)}$. 
Let $O \subset \overline{j(T)}$ be such a 
$T$-orbit. Then all the level sets 
$\varphi^{-1}(t)$ ($t>0$) of 
$\varphi$ are transversal to $O$, or 
all are not. But the latter case cannot 
occur by the Sard theorem. Then we obtain an 
isomorphism 
\begin{equation}
H^pRj_*(\LL )_0 \simeq H^p(\CC^N; Rj_*(\LL )) 
\simeq H^p(T^{\an} ; \LL ) \simeq 0
\end{equation}
for any $p \in \ZZ$. Next consider the remaining 
case where $q \in O$ for a $T$-orbit $O$ 
in $\overline{j(T)}$ such that 
$\dim O \geq 1$. Then in a neighborhood of 
$q$, the variety $\overline{j(T)}$ is a 
product $W \times O$ 
for an affine toric variety $W \subset 
\CC^{N^{\prime}}$ and 
$j(T)=(T_1 \sqcup \cdots \sqcup T_k) 
\times O$ for some tori $T_i \simeq 
(\CC^*)^{n- \dim O}$. See 
\cite[Chapter 5, Theorem 3.1]{G-K-Z} 
and the proof of 
\cite[Theorem 4.9]{M-T-1} 
for the details. Moreover for 
the semigroup $S(A_O) \subset \ZZ^{n- \dim O}$ 
generated by a finite subset $A_O \subset 
\ZZ^{n- \dim O}$ we have 
$\overline{T_i} \simeq \Spec (\CC [S(A_O)]) 
\subset W$ ($i=1,2, \ldots, k$). 
These varieties $\overline{T_i}$ 
are the irreducible components of 
$W$. For the explicit construction of 
$\overline{T_i}$ see the proof of 
\cite[Theorem 4.9]{M-T-1}. By this construction 
$0 \in \RR^{n- \dim O}$ is an apex 
of the convex cone $K(A_O) \subset 
\RR^{n- \dim O}$ generated by $A_O$. 
Let $p_2: W \times O 
\longrightarrow O$ and 
$q_2: T_i \times O 
\longrightarrow O$ be the second 
projections. Then it follows from 
the nonresonance of $c \in \CC^n$ the 
restriction of $\LL$ to $q_2^{-1}p_2 
(q) \simeq T_i$ is a non-constant 
local system. 
So we can apply our 
previous arguments and 
prove $Rj_*(\LL )_q \simeq 0$ in this 
case, too. This completes the proof. 
\end{proof} 

By Lemma \ref{IRR}, if 
$c \in \CC^n$ is nonresonant the 
non-trivial morphism $\Psi$ 
should be surjective. 
According to Schulze-Walther 
\cite[Corollary 3.7]{SW1} the 
morphism $\Psi$ is also an 
isomorphism in this case. 
Let $\FT :D(Y) \simto D(X)$ be the inverse 
of the Fourier transform $\GFT$. 
Then we have an isomorphism 
$N_{A, c}^{\FT} \simeq M_{A, c}$ 
of left $D(X)$-modules. The corresponding 
coherent $\D_X$-module $\N_{A, c}^{\FT} 
\simeq \M_{A, c}$ can be 
constructed also in the following way. 
Let $\sigma = \langle \cdot, \cdot  \rangle : X \times Y 
\longrightarrow \CC$ be the canonical pairing 
defined by $\langle z, \zeta \rangle =
\sum_{j=1}^N z_j \zeta_j$ and $p_1: X \times Y 
\longrightarrow X$ (resp. $p_2: X \times Y 
\longrightarrow Y$) the first (resp. second) 
projection. Then we have the following theorem 
due to Katz-Laumon \cite{K-L}. 

\begin{theorem}\label{KAL} 
(Katz-Laumon \cite{K-L}) 
For any $c \in \CC^n$ we have an isomorphism 
\begin{equation}
\N_{A, c}^{\FT} \simeq \int_{p_1} \left\{ 
(p_2^* \N_{A, c}) \otimes_{\sho_{X \times Y}} 
\sho_{X \times Y}e^{\sigma} \right\} , 
\end{equation}
where $\sho_{X \times Y}e^{\sigma} $ is 
the integrable connection associated to 
$e^{\sigma}: X \times Y 
\longrightarrow \CC$. 
\end{theorem}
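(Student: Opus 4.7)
The plan is to verify the isomorphism by unwinding both sides in coordinates on the affine spaces $X$ and $Y$. On the right-hand side, since $p_1 : X \times Y \longrightarrow X$ is smooth with affine fibers $Y \simeq \CC^N$, the direct image $\int_{p_1}$ of a coherent $\D_{X \times Y}$-module $M$ is computed (up to cohomological shift) by the relative de Rham complex along $p_1$, which in coordinates is the Koszul complex on the commuting operators $\partial/\partial \zeta_1, \ldots, \partial/\partial \zeta_N$ acting on $M = (p_2^* \N_{A, c}) \otimes_{\sho_{X \times Y}} \sho_{X \times Y} e^{\sigma}$.

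The key computation uses the conjugation identities
\[
e^{-\sigma} \circ \frac{\partial}{\partial \zeta_j} \circ e^{\sigma} = \frac{\partial}{\partial \zeta_j} + z_j, \qquad e^{-\sigma} \circ \frac{\partial}{\partial z_j} \circ e^{\sigma} = \frac{\partial}{\partial z_j} + \zeta_j.
\]
Since $p_2^* \N_{A, c}$ carries the trivial $\D$-action in the $X$-direction, on $M$ the operator $\partial/\partial \zeta_j$ acts as $\partial/\partial \zeta_j + z_j$ (where $\partial/\partial \zeta_j$ on $p_2^* \N_{A,c}$ denotes the operator inherited from the $\D_Y$-module structure on $\N_{A, c}$), while $\partial/\partial z_j$ acts by multiplication by $\zeta_j$. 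The zeroth cohomology of the Koszul complex is therefore the quotient of $p_2^* \N_{A, c}$ by the relations $(\partial/\partial \zeta_j + z_j) m = 0$, on which $\partial/\partial \zeta_j$ acts as $-z_j$ and $\partial/\partial z_j$ continues to act as $\zeta_j$. These are exactly the defining formulas $\FT(\partial/\partial \zeta_j) = -z_j$ and $\FT(\zeta_j) = \partial/\partial z_j$ of the inverse Fourier transform, which identifies this quotient with $\N_{A, c}^{\FT}$ as left $\D_X$-modules.

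The main obstacle is the vanishing of the higher cohomologies of the Koszul complex, i.e., showing that the commuting operators $\partial/\partial \zeta_j + z_j$ form a regular sequence on $p_2^* \N_{A, c}$. I would handle this by equipping $\N_{A, c}$ with its order filtration as a coherent $\D_Y$-module and passing to the associated graded, where the Koszul differentials degenerate to multiplication by the symbols $\zeta_j + z_j$ on a coherent module over the polynomial ring underlying $\sho_{T^* Y \times X}$; the resulting regularity follows from a standard Koszul-complex criterion applied to this polynomial sequence together with the fact that the symbols cut out a transverse section of the cotangent direction. Once this acyclicity is in place, the coordinate computation above immediately identifies the resulting left $\D_X$-module with $\N_{A, c}^{\FT}$, which completes the proof. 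An alternative, equivalent route is to note that both sides are right-exact functors of the $\D_Y$-module argument and agree on the free module $\D_Y$ by the same coordinate computation, and then extend by right-exactness using a presentation of $\N_{A,c}$.
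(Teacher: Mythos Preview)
The paper does not actually supply a proof of this theorem: it is stated as a result of Katz--Laumon and used as a black box, with only a passing reference (in the proof of the lemma following Theorem~\ref{Main}) to ``Malgrange's simple proof \cite[page 135]{Malgrange}'' of it. So there is no proof in the paper to compare against; you have effectively reconstructed the standard argument that the paper outsources to \cite{Malgrange} and \cite{K-L}.

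Your sketch is the Malgrange approach and is essentially correct. The identification of $\int_{p_1}$ with the relative de Rham (Koszul) complex in the $\partial/\partial\zeta_j$'s, the conjugation identities for $e^{\sigma}$, and the resulting action $\partial/\partial\zeta_j\mapsto -z_j$, $\zeta_j\mapsto \partial/\partial z_j$ on the top cohomology match the paper's sign conventions for $\FT$ exactly. The acyclicity step is the only point that needs care, and both of the routes you indicate (filtration and pass to $\mathrm{gr}$; or right-exactness plus checking on $\D_Y$ and using a presentation) are the standard ways to handle it. One small sharpening: in the filtration argument it is cleaner to filter not by the order filtration on $\N_{A,c}$ but by total $\zeta$-degree on $p_2^*\N_{A,c}\simeq \CC[z]\otimes_\CC \N_{A,c}$, so that on the associated graded the operators $\partial/\partial\zeta_j+z_j$ become multiplication by the independent variables $z_j$, which is transparently a regular sequence on $\CC[z]\otimes_\CC(\text{anything})$.
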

In the same way, for any $c \in \CC^n$ we have 
\begin{equation}\label{KLS} 
\CS_{A, c}^{\FT} = \int_{p_1} \left\{ 
(p_2^* \CS_{A, c}) \otimes_{\sho_{X \times Y}} 
\sho_{X \times Y}e^{\sigma} \right\}.  
\end{equation} 
If moreover $c \in \CC^n$ 
is nonresonant, then by Lemma \ref{IRR} 
we obtain surjective morphisms 
$N_{A,c} \longrightarrow \CS_{A, c}(Y)$ 
and $\M_{A, c}(X) \simeq N_{A, c}^{\FT} 
\longrightarrow \CS_{A, c}^{\FT}(X)$. 
For nonresonant  $c \in \CC^n$, 
we thus obtain a surjective morphism 
\begin{equation}
\M_{A, c} \simeq \N_{A, c}^{\FT} 
\longrightarrow \CS_{A, c}^{\FT} 
\end{equation}
of left $\D_X$-modules. 
Let $e^{\tau}: X \times T \longrightarrow \CC$ 
be the function defined by $e^{\tau}(z,x)=
\exp (\sum_{j=1}^N z_j x^{a(j)})$ and 
$q_1: X \times T 
\longrightarrow X$ (resp. $q_2: X \times T 
\longrightarrow T$) the first (resp. second) 
projection. Then by the base change theorem 
\cite[Theorem 1.7.3 and Corollary 1.7.5]{H-T-T}, 
we have the isomorphism  
\begin{equation}
\CS_{A, c}^{\FT} \simeq \int_{q_1} \left\{ 
(q_2^* \R_c) \otimes_{\sho_{X \times T}} 
\sho_{X \times T}e^{\tau} \right\}.  
\end{equation}
Namely $\CS_{A, c}^{\FT}$ is the direct 
image of the integrable connection 
\begin{equation}
\CK =(q_2^* \R_c) \otimes_{\sho_{X \times T}} 
\sho_{X \times T}e^{\tau} 
\end{equation}
on $X \times T$ by $q_1$. Define 
a function $g: X \times T \longrightarrow \CC$ by 
\begin{equation}
g(z,x)=\exp (\sum_{j=1}^N z_j x^{a(j)}) 
x_1^{c_1-1} \cdots x_n^{c_n-1}.  
\end{equation}
Then by the results of Hien-Roucairol \cite{H-R} 
the holomorphic solution complex 
\begin{equation}
{\rm Sol}_X (\CS_{A, c}^{\FT})=\rhom_{\D_{X^{\an}}}
((\CS_{A, c}^{\FT})^{\an}, \sho_{X^{\an}})  
\end{equation}
of $\CS_{A, c}^{\FT}$ 
is expressed by the rapid decay homology groups 
associated the function $g$. 
Indeed, for $z \in \Omega$ let 
$\CK_z$ (resp. $g_z: T \longrightarrow \CC$)
be the restriction of the connection 
$\CK$ (resp. the function $g$) 
to $U_z:=q_1^{-1}(z) \simeq T \subset 
\Omega \times T$. Namely we set 
\begin{equation}
g_z(x)=\exp (\sum_{j=1}^N z_j x^{a(j)}) 
x_1^{c_1-1} \cdots x_n^{c_n-1}.  
\end{equation}
for $z \in U_z \simeq T$. 
Then $\CK_z \simeq \sho_{U_z}g_z$ and for 
the dual connection 
$\CK_z^* \simeq \sho_{U_z}( \frac{1}{g_z})$ 
of $\CK_z$ we have 
\begin{equation}
H^{-n}{\rm DR}_{T} (\CK_z^*) 
\simeq \CC_{U_z^{\an}}g_z. 
\end{equation}
Moreover for any $p \in \ZZ$, by Proposition 
\ref{RTH} (see also the proof of Theorem \ref{Main} 
below) the rapid decay homology groups 
\begin{equation}
H_p^{\rd}(U_z; \CK_z^* )
\qquad (z \in \Omega^{\an}) 
\end{equation}
associated 
to the integrable connections 
$\CK_z^*$ (or to the 
functions $g_z: T 
\longrightarrow \CC$) 
are isomorphic to each other 
and define a local system 
$\CH_p^{\rd}$ 
on $\Omega^{\an}$. See \cite{H-R} for the details. 
The following result is essentially due to 
Hien-Roucairol \cite{H-R}. 

\begin{theorem}\label{HAR} 
(Hien-Roucairol \cite{H-R}) For any 
$c \in \CC^n$ and $p \in \ZZ$ 
we have an isomorphism 
\begin{equation}
\CH_{n+p}^{\rd} \simeq 
H^p {\rm Sol}_X (\int_{q_1} \CK )
\simeq H^p {\rm Sol}_X (\CS_{A, c}^{\FT})
\end{equation}
of local systems on $\Omega^{\an}$. 
\end{theorem}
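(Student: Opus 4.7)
The plan is to establish the theorem stalk by stalk over $\Omega^{\an}$ and then promote the pointwise isomorphisms to an isomorphism of local systems. The second isomorphism $H^p\,{\rm Sol}_X(\int_{q_1}\CK) \simeq H^p\,{\rm Sol}_X(\CS_{A,c}^{\FT})$ is immediate from the identification $\CS_{A,c}^{\FT}\simeq\int_{q_1}\CK$ derived above, so all the substance lies in the first isomorphism. I would first note that, thanks to Lemma \ref{IRR} and the surjection $\M_{A,c}\twoheadrightarrow\CS_{A,c}^{\FT}$, together with Adolphson's result that $\M_{A,c}$ restricts to an integrable connection on $\Omega$, the module $\CS_{A,c}^{\FT}$ is itself an integrable connection on $\Omega$. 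Hence $H^p\,{\rm Sol}_X(\CS_{A,c}^{\FT})|_{\Omega^{\an}}$ is a local system concentrated in a single degree, and it suffices to produce a natural, monodromy-compatible identification of stalks at each $z\in\Omega^{\an}$.

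For a fixed $z\in\Omega^{\an}$, I would apply the $\D$-module base change theorem \cite[Theorem 1.7.3]{H-T-T} to the Cartesian square cutting out the fiber $U_z = q_1^{-1}(z)\simeq T$. This identifies the fiber of $\int_{q_1}\CK$ at $z$ with the total algebraic de Rham cohomology of the restricted integrable connection $\CK_z \simeq \sho_{U_z} g_z$ on $U_z$, so the stalk of $H^p\,{\rm Sol}_X(\int_{q_1}\CK)$ at $z$ becomes the linear dual of the $(n+p)$-th algebraic de Rham cohomology of $\CK_z$. The next step is to bring in Hien's theorem \cite{H-2}, which supplies the perfect period pairing
\begin{equation}
H_{n+p}^{\rd}(U_z;\CK_z^*)\otimes H^{n+p}\bigl({\rm DR}_{U_z}(\CK_z)\bigr)\longrightarrow\CC,\qquad(\gamma,\omega)\longmapsto\int_{\gamma} g_z\,\omega,
\end{equation}
between rapid decay homology of the dual connection and algebraic de Rham cohomology of $\CK_z$. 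Composing these identifications gives a natural isomorphism between the stalk of $\CH_{n+p}^{\rd}$ at $z$ and the stalk of $H^p\,{\rm Sol}_X(\int_{q_1}\CK)$ at $z$.

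To promote these pointwise isomorphisms to an isomorphism of local systems on $\Omega^{\an}$, I would appeal to Hien-Roucairol \cite{H-R}: one verifies that rapid decay cycles can be chosen to vary continuously with $z$ over $\Omega^{\an}$ (which is exactly how $\CH_{n+p}^{\rd}$ is endowed with its local system structure), and that the period integrals $z\mapsto\int_{\gamma^z} g_z\,\omega$ satisfy the differential equations encoded by $\CS_{A,c}^{\FT}$. The principal obstacle is this horizontality of the period pairing in families, which amounts to checking that the Gauss-Manin connection on the de Rham side is dual to the canonical flat structure on $\CH_{n+p}^{\rd}$ under the pairing; this is precisely the content of Hien-Roucairol's main result, which we invoke rather than reprove.
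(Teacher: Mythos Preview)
The paper does not supply a proof of this theorem: it is stated as a result ``essentially due to Hien--Roucairol \cite{H-R}'' and invoked as a black box. Your sketch is therefore not being compared against an in-paper argument but against the cited literature, and in that respect it is accurate: you correctly identify the three ingredients---base change to reduce to the fiberwise de Rham cohomology of $\CK_z$, Hien's perfect period pairing \cite{H-2} between rapid decay homology of $\CK_z^*$ and algebraic de Rham cohomology of $\CK_z$, and the horizontality/compatibility with the Gauss--Manin connection established in \cite{H-R}---and assemble them in the right order. In effect you have reconstructed the outline of the Hien--Roucairol argument rather than offered a new one, which is entirely appropriate for a cited result.
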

Recall that in \cite[Section 3]{A} Adolphson 
proved that $\M_{A, c}$ is an integrable 
connection on $\Omega$. 
From now on, we assume that $c \in \CC^n$ 
is nonresonant. Then we have the 
surjective morphism 
$\M_{A, c} \simeq \N_{A, c}^{\FT} 
\longrightarrow \CS_{A, c}^{\FT}$ 
and $\CS_{A, c}^{\FT}$ is also an integrable 
connection on $\Omega$. This in 
particular implies that 
for any $p \not= 0$ we have 
$H^p {\rm Sol}_X (\CS_{A, c}^{\FT}) 
\simeq 0$. Hence we get 
\begin{equation}\label{key} 
H_{n+p}^{\rd}(U_z; \CK_z^*) \simeq 0 
\qquad (p \not= 0, \quad z \in \Omega^{\an}). 
\end{equation}
It follows also from the 
surjection 
$\M_{A, c} \longrightarrow 
\CS_{A, c}^{\FT}$ that we have an injection 
\begin{equation}\label{EQI} 
\Phi : \CH_{n}^{\rd}  \simeq 
\shom_{\D_{X^{\an}}}
((\CS_{A, c}^{\FT})^{\an}, \sho_{X^{\an}})
 \hookrightarrow \shom_{\D_{X^{\an}}}
(( \M_{A, c} )^{\an}, \sho_{X^{\an}}). 
\end{equation}
By using the generator 
\begin{equation}
u=[1] \in \M_{A, c}=
\D_{X} / \left(\sum_{1 \leq i \leq n} 
\D_{X} Z_{i,c} + \sum_{\mu \in {\rm Ker} A 
\cap \ZZ^{N} } \D_{X} \square_{\mu} \right) 
\end{equation}
of $\M_{A, c}$ we regard $\shom_{\D_{X^{\an}}}
(( \M_{A, c} )^{\an}, \sho_{X^{\an}})$ as 
a subsheaf of $\sho_{X^{\an}}$. Then 
we have the following result. 

\begin{theorem}\label{Main} 
Assume that the parameter vector 
$c \in \CC^n$ is nonresonant. 
Then the morphism 
$\Phi$ induces an isomorphism 
\begin{equation}\label{ETT} 
\CH_{n}^{\rd} \simeq 
\shom_{\D_{X^{\an}}}
(( \M_{A, c} )^{\an}, \sho_{X^{\an}}) 
\end{equation}
of local systems on $\Omega^{\an}$. 
Moreover this isomorphism is given by 
the integral 
\begin{equation}
\gamma \longmapsto \left\{ \Omega^{\an} \ni z 
\longmapsto \int_{\gamma^z} 
\exp (\sum_{j=1}^N z_j x^{a(j)}) 
x_1^{c_1-1} \cdots x_n^{c_n-1} 
dx_1 \wedge \cdots \wedge dx_n \right\}, 
\end{equation}
where for a continuous family $\gamma$ of rapid 
decay $n$-cycles in $\Omega^{\an} \times T^{\an}$ 
and $z \in \Omega^{\an}$ we 
denote by $\gamma^z$ its restriction 
$\gamma \cap U_z$ to $U_z=q_1^{-1}(z) \simeq T$. 
\end{theorem}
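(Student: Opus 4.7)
The plan is to verify that both sides of \eqref{ETT} are local systems on $\Omega^{\an}$ of the same rank $\Vol_{\ZZ}(\Delta)$ and then conclude from the injectivity of $\Phi$ already established. By Adolphson's Theorem \ref{ADL}, since nonresonance implies semi-nonresonance, the right-hand side is a local system of rank $\Vol_{\ZZ}(\Delta)$; by Hien--Roucairol (Theorem \ref{HAR}), the left-hand side $\CH_n^{\rd}$ is also a local system. The core problem is therefore to show that $\dim_{\CC} H_n^{\rd}(U_z; \CK_z^*) = \Vol_{\ZZ}(\Delta)$ for each $z \in \Omega^{\an}$.

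For this I would apply the toric compactification method of \cite{L-S} and \cite{M-T-3}. Fix $z \in \Omega^{\an}$. Using the non-degeneracy of $h_z$, construct a smooth projective toric compactification $\overline{T}$ of $T$, adapted to a fan refining the normal fan of $\Delta$, so that the meromorphic extension $\bar h_z$ of $h_z$ has no indeterminacy points, the boundary divisor $D = \overline{T}\setminus T$ is simple normal crossing, and the pole order of $\bar h_z$ along each toric divisor is read off from its primitive normal vector evaluated on the vertices of $\Delta$. Then $\overline{T}$ is a good compactification of $T$ for $\CK_z^*$ in the sense of Sabbah--Mochizuki, and Proposition \ref{RTH} supplies an isomorphism
\begin{equation*}
H_p^{\rd}(U_z; \CK_z^*) \simeq H_p\bigl(T^{\an} \cup Q,\, Q;\, \iota_*(\CC_{T^{\an}} x_1^{c_1-1}\cdots x_n^{c_n-1})\bigr),
\end{equation*}
where $Q \subset \tilde{D}$ is the open set of rapid-decay directions, determined combinatorially by the cones of the fan and the vertices of $\Delta$.

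The vanishing \eqref{key} says that rapid decay homology is concentrated in degree $n$, so the desired dimension equals $(-1)^n$ times the Euler characteristic of the above relative twisted homology. I would stratify $\overline{T}$ by torus orbits and run a Mayer--Vietoris argument on preimages of neighborhoods of these strata in the real oriented blow-up. By Lemma \ref{EC}, any stratum lying inside at least two relevant toric divisors contributes zero. The remaining strata sit in a single relevant divisor, where Lemma \ref{ECL} can be applied fibrewise, and the combinatorial tally of rapid-decay arcs weighted by the lattice volumes of the corresponding faces of $\Delta$ yields $(-1)^n \Vol_{\ZZ}(\Delta)$, via a Kouchnirenko-type identity proved by induction on $n$ using additivity of normalized volume over the subdivision of $\Delta$ into cones from the origin, together with the nondegeneracy of $h_z$ preventing critical points on the toric boundary.

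Finally, for the explicit integral representation, I would unwind the Hien--Roucairol pairing. The identification $\CS_{A,c}^{\FT} \simeq \int_{q_1}\CK$ and de Rham duality realise $\CH_n^{\rd} \simto \shom_{\D_{X^{\an}}}((\CS_{A,c}^{\FT})^{\an}, \sho_{X^{\an}})$ as fibrewise integration $[\gamma]\mapsto \int_{\gamma^z}(\cdot)$. The $n$-form $dx_1\wedge\cdots\wedge dx_n$ arises from the canonical section $1_{Y \longleftarrow T}$ of $\D_{Y \longleftarrow T}$, whose Fourier inverse produces precisely that top form on each fibre $U_z\simeq T$; the horizontal section of $\CK_z^*$ is exactly $g_z(x) = \exp(\sum_j z_j x^{a(j)})\, x_1^{c_1-1}\cdots x_n^{c_n-1}$. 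Composing with the surjection $\M_{A,c}\twoheadrightarrow \CS_{A,c}^{\FT}$, which sends the generator $u = [1]\in\M_{A,c}$ to $w = j_*(1_{Y\longleftarrow T}\otimes w_0)$, reproduces the integral stated in the theorem. The main obstacle will be the Euler-characteristic computation of the preceding paragraph: the careful bookkeeping of relevant divisors, rapid-decay arcs attached to each stratum, and the identification of the resulting combinatorial sum with $\Vol_{\ZZ}(\Delta)$.
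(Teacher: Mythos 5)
Your proposal is correct and follows essentially the same route as the paper: reduction to the rank count via Adolphson's Theorem \ref{ADL} and the vanishing \eqref{key}, the toric compactification with blow-ups from \cite{L-S} and \cite{M-T-3} to remove indeterminacy of $h_z$, Proposition \ref{RTH} together with Lemmas \ref{EC} and \ref{ECL} and Mayer--Vietoris to get $\Eu^{\rd}(U_z;\CK_z^*)=(-1)^n\Vol_{\ZZ}(\Delta)$ via the identity $\sum_i v_i m_i=\Vol_{\ZZ}(\Delta)$, and finally tracing the canonical generator $u=[1]$ through the Fourier transform to identify the integrand with $g_z\,dx_1\wedge\cdots\wedge dx_n$. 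The only cosmetic difference is that the paper isolates the last step as a separate lemma (using Malgrange's proof of the Katz--Laumon theorem to track $1_{Y\longleftarrow T}$), which you sketch inline.
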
 
Note that this integral representation of 
the confluent $A$-hypergeometric functions 
$\shom_{\D_{X^{\an}}}
(( \M_{A, c} )^{\an}, \sho_{X^{\an}})$ 
coincides with the one in Adolphson 
\cite[Equation (2.6)]{A}. 
\begin{proof}
Recall that the sheaf $\shom_{\D_{X^{\an}}}
(( \M_{A, c} )^{\an}, \sho_{X^{\an}}) $ is 
a local system on $\Omega^{\an}$. Moreover 
by \cite[Corollary 5.20]{A} its rank 
is $\Vol_{\ZZ}(\Delta )$. So it 
suffices to show that for any 
$z \in \Omega^{\an}$ the dimension of 
the $n$-th rapid decay homology group 
$H_n^{\rd}(U_z; \CK_z^*)$ is also 
$\Vol_{\ZZ}(\Delta )$. Let 
\begin{equation}
\Eu^{\rd}(U_z; \CK_z^*) 
:=\sum_{p \in \ZZ} (-1)^p 
\dim H_p^{\rd}(U_z; \CK_z^*)
\end{equation}
be the rapid decay Euler characteristic. 
Then by \eqref{key} we have only to 
prove the equality 
\begin{equation}\label{AIM} 
\Eu^{\rd}(U_z; \CK_z^*)
=(-1)^n \Vol_{\ZZ}(\Delta ). 
\end{equation}
Let $\Sigma_0$ be the dual fan of 
$\Delta = {\rm conv} (A \cup \{ 0 \} )$ in $\RR^n$ 
and $\Sigma$ its smooth subdivision. 
Denote by $Z_{\Sigma}$ the smooth toric 
variety associated to the fan $\Sigma$. 
Then $Z_{\Sigma}$ is a smooth 
compactification of $U_z \simeq T$ such 
that $Z_{\Sigma} \setminus U_z$ is 
a normal crossing divisor. 
However on $Z_{\Sigma}$ there still 
remain some points where the zero and the 
pole of the meromorphic extension of 
$h_z(x)= \sum_{j=1}^N z_j x^{a(j)}$ to it 
meet. We call them the points 
of indeterminacy of $h_z$. 
By using the 
non-degeneracy of the Laurent polynomial 
$h_z(x)$, as in \cite[Section 3]{M-T-3} we then 
construct a complex blow-up $Z:=\tl{Z_{\Sigma}}$ 
of $Z_{\Sigma}$ such that the meromorphic 
extension of $h_z$ to it has no point 
of indeterminacy. For the reader's convenience, 
we briefly recall the construction of $Z$. Recall that 
$T$ acts on $Z_{\Sigma}$ and the $T$-orbits are parametrized 
by the cones in $\Sigma$. For a cone $\sigma \in \Sigma$ 
we denote by $T_{\sigma} \simeq (\CC^*)^{n- \dim \sigma}$ 
the corresponding $T$-orbit. Let $\rho_1, \ldots, \rho_m 
\in \Sigma$ be the rays i.e. the one-dimensional cones in 
$\Sigma$. By using the primitive vectors $\kappa_i \in 
\rho_i \cap (\ZZ^n \setminus \{ 0 \} )$ on $\rho_i$ 
we set 
\begin{equation}
m_i= - \min_{a \in \Delta} \langle \kappa_i, a 
\rangle \geq 0. 
\end{equation}
We renumber  $\rho_1, \ldots, \rho_m$ 
so that $m_i>0$ if and only if 
$1 \leq i \leq l$ for some $1 \leq l \leq m$. Then for 
any $1 \leq i \leq l$ the meromorphic extension of 
$h_z$ to $Z_{\Sigma}$ has a pole of order $m_i >0$ 
along the toric divisor $D_i= \overline{T_{\rho_i}} 
\subset Z_{\Sigma}$. By the non-degeneracy of $h_z$ 
the hypersurface $\overline{h_z^{-1}(0)} \subset 
Z_{\Sigma}$ intersects $D_I= \cap_{i \in I} D_i$ 
transversally 
for any subset $I \subset \{ 1,2, \ldots, m \}$ 
such that $I \cap \{ 1,2, \ldots, l \} 
\not= \emptyset$ (see Definition \ref{AND}). 
The meromorphic extension of 
$h_z$ has points of indeterminacy in $\cup_{i=1}^l 
( \overline{h_z^{-1}(0)} \cap D_i)$.

\begin{center}
\includegraphics[height=8cm]{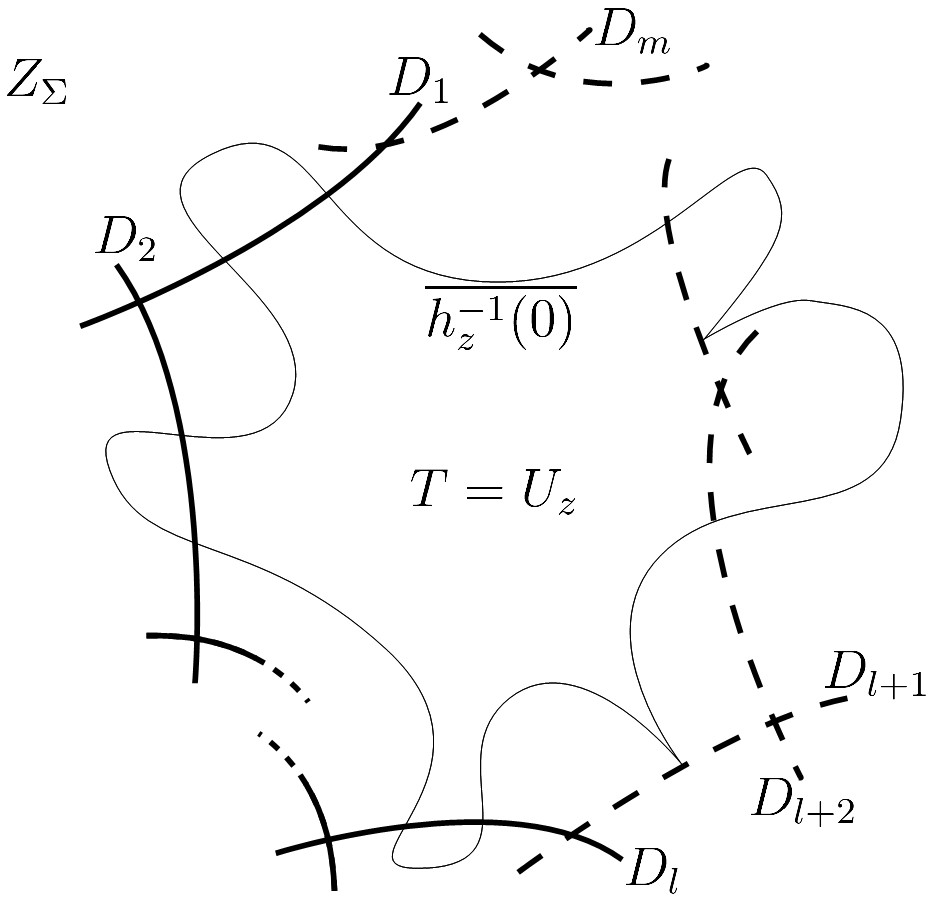}

Figure 2.
\end{center}


First we construct a tower of 
$m_1$ codimension-two blow-ups 
over $\overline{h_z^{-1}(0)} \cap D_1$ 
(see \cite[Section 3]{M-T-3} 
and \cite[Section 3 and Lemma 4.9]{M-T-4} 
for the details). 
Then the indeterminacy of $h_z$ 
over $D_1 \setminus ( \cup_{j \not= 1} D_j)$ 
is eliminated. 
By repeating this construction also over 
(the proper transforms of) 
$D_2, D_3, \ldots, D_l$ we finally obtain the desired 
proper morphism $Z= \tilde{Z_{\Sigma}} 
\longrightarrow Z_{\Sigma}$ 
of $Z_{\Sigma}$ as the figure below.

\begin{center}
\includegraphics[height=7cm]{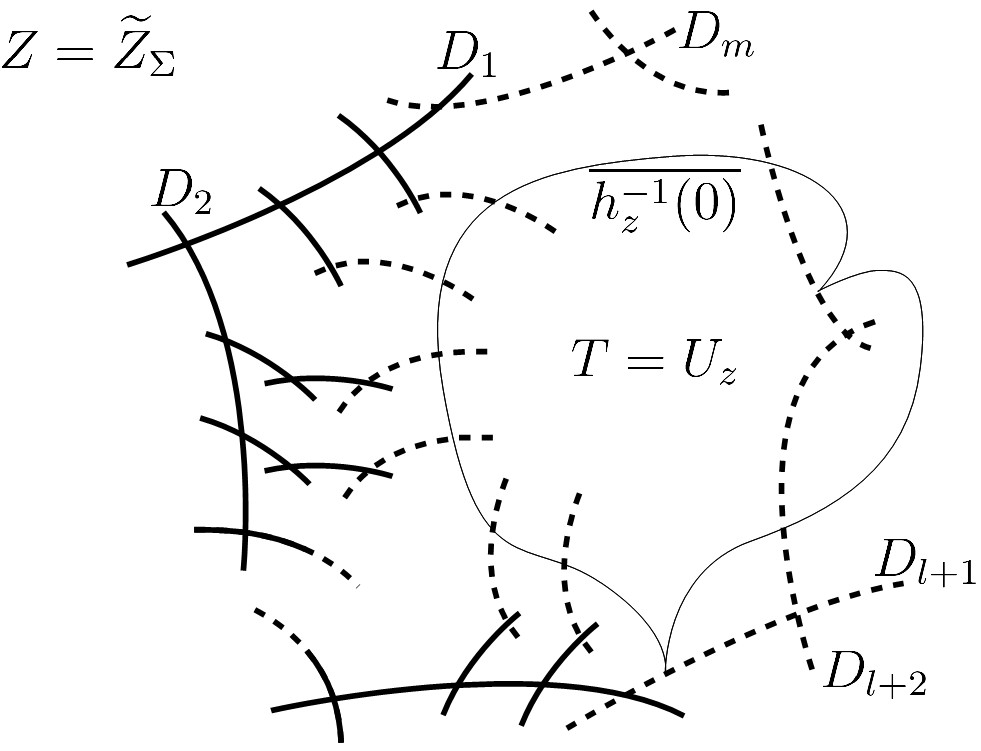}

Figure 3.
\end{center}


Now we can use this smooth 
compactification $Z$ of $U_z \simeq 
T$ and the normal crossing divisor 
$D:=Z \setminus U_z$ in it to define the 
rapid decay homology groups associated to the 
function $g_z(x)=\exp (h_z(x)) 
x_1^{c_1-1} \cdots x_n^{c_n-1}$ 
(see \cite[Section 2.1]{H-R}).  
In the figure of $Z$ above, 
the dotted curves stand for the irrelevant 
components (see the last half of 
Section \ref{sec:3}) 
of $D$. Note that 
for any $1 \leq i \leq l$ the inverse image 
of $\overline{h_z^{-1}(0)} \cap D_i$ 
by the morphism $Z= \tl{Z_{\Sigma}} 
\longrightarrow Z_{\Sigma}$ is a union 
of $\PP^1$-bundles on it and only the 
last one among them is irrelevant. 
Let $\pi : \tilde{Z} 
\longrightarrow Z^{\an}$ be the real oriented 
blow-up of $Z^{\an}$ along $D^{\an}$ and set 
$\tilde{D} =\pi^{-1}(D^{\an})$. 
Then, as in Section \ref{sec:3} we define 
the rapid decay homology groups 
$H_p^{\rd}(U_z; \CK_z^*)$ by using 
$\pi : \tilde{Z} 
\longrightarrow Z^{\an}$, 
$\tilde{D}$ and $g_z$ etc. By 
defining the set $Q \subset \tilde{D}$ 
of the rapid decay directions of 
$g_z(x)$ etc. as in Proposition \ref{RTH}, 
for the local system 
$\LL =
\CC_{T^{\an}}x_1^{c_1-1} \cdots x_n^{c_n-1}$ 
on $(U_z)^{\an} \simeq T^{\an}$ 
and the inclusion map 
$\iota: (U_z)^{\an} \simeq T^{\an} 
\hookrightarrow \tilde{Z}$ 
we obtain isomorphisms 
\begin{equation}
H_p^{\rd}(U_z ; \CK_z^*) \simeq 
H_p(T^{\an} \cup Q, Q; \iota_*(\LL ) ) 
\qquad (p \in \ZZ ). 
\end{equation}
For $1 \leq i \leq l$ we define a face 
$\Gamma_i \prec \Delta$ of $\Delta$ by 
\begin{equation}
\Gamma_i= \{ b \in \Delta \ | \ 
\langle \kappa_i, b \rangle = \min_{a \in \Delta} 
\langle \kappa_i, a \rangle \}. 
\end{equation}
We call it the supporting face of 
$\rho_i$ in $\Delta$. Denote by 
$v_i \geq 0$ the normalized 
(or simplicial) $(n-1)$-dimensional 
volume $\Vol_{\ZZ}( \Gamma_i) 
\in \ZZ_+$ of $\Gamma_i$. Then the $\Gamma_i$-part  
$h_z^{\Gamma_i}$ of $h_z$ is naturally 
identified with the defining (Laurent) 
polynomial of the hypersurface 
$T_{\rho_i} \cap \overline{h_z^{-1}(0)}$ 
in $T_{\rho_i} \simeq (\CC^*)^{n-1}$. 
Moreover by the Bernstein-Khovanskii-Kouchnirenko 
theorem (see \cite{Khovanskii}) its 
Euler characteristic is equal to 
$(-1)^n v_i$. Note that we have 
$\sum_{i=1}^l (v_i \times m_i)= 
\Vol_{\ZZ}( \Delta )$. 
Indeed, for the convex hulls $\widehat{\Gamma_i}$ 
of $\Gamma_i \cup \{ 0 \}$ in $\RR^n$ we 
have $\cup_{i=1}^l \widehat{\Gamma_i} = \Delta$ 
and $\sum_{i=1}^l 
\Vol_{\ZZ}( \widehat{\Gamma_i} )= 
\Vol_{\ZZ}( \Delta )$. Since $\Vol_{\ZZ}( 
\widehat{\Gamma_i} )$ 
(resp. $v_i= \Vol_{\ZZ}( \Gamma_i )$) 
is $n!$ times (resp. $(n-1)!$ times)  
the Euclidean volume of $\widehat{\Gamma_i}$ 
(resp $\Gamma_i$) and $m_i>0$ is the 
lattice height of $\widehat{\Gamma_i}$ from 
its base $\Gamma_i \prec \widehat{\Gamma_i}$, 
we have also $\Vol_{\ZZ}( \widehat{\Gamma_i} ) 
= \Vol_{\ZZ}( \Gamma_i ) \times m_i= 
v_i \times m_i$ for any $1 \leq i \leq l$. 

\medskip \par 
Let $Z= \sqcup_{\alpha} Z_{\alpha}$ be the 
canonical stratification of 
$Z= \tl{Z_{\Sigma}}$ associated 
to the normal crossing divisor $D=Z \setminus 
T$ and $E \subset Z$ the 
union of the exceptional divisors of the blow-up 
$Z= \tl{Z_{\Sigma}} 
\longrightarrow Z_{\Sigma}$. Then for any 
$1 \leq i \leq l$ there exists a unique stratum 
$Z_{\alpha_i}$ such that $T_{\rho_i} \setminus E= 
Z_{\alpha_i}$. For each stratum $Z_{\alpha}$ in 
the stratification 
we take its sufficiently small tubular 
neighborhood $V_{\alpha}$ in $Z$. For 
$1 \leq i \leq l$ we denote the 
alternating sum 
\begin{equation}\label{Eul} 
\sum_{p \in \ZZ} (-1)^p  
H_p( (V_{\alpha_i} \cap T^{\an}) 
\cup ( \pi^{-1}( V_{\alpha_i} ) \cap Q) , 
( \pi^{-1}( V_{\alpha_i} ) \cap Q) ; 
\iota_*( \LL ) ) 
\end{equation}
simply by $\Eu^{\rd}_i$. 
Then by applying Lemmas \ref{EC} and \ref{ECL} 
to the Mayer-Vietoris exact 
sequences for the relative twisted homology groups 
\begin{equation} 
H_p( (V_{\alpha} \cap T^{\an}) 
\cup ( \pi^{-1}( V_{\alpha} ) \cap Q) , 
( \pi^{-1}( V_{\alpha} ) \cap Q) ; 
\iota_*( \LL ) ) 
\end{equation}
and the geometric situation in Figure $3$ above, 
we can easily show that
\begin{equation} 
\Eu^{\rd}(U_z; \CK_z^*)= 
\sum_{p \in \ZZ} (-1)^p \dim 
H_p(T^{\an} \cup Q, Q; \iota_*(\LL ) )
= \sum_{i=1}^l \Eu^{\rd}_i. 
\end{equation}
Moreover by the proof of Lemma \ref{EC} and 
and Lemma \ref{ECL}, for any $1 \leq i \leq l$ 
we have $\Eu^{\rd}_i= (-1)^n v_i \times m_i$. 
Then the equality \eqref{AIM} follows 
from $\sum_{i=1}^l (v_i \times m_i)= 
\Vol_{\ZZ}( \Delta )$. This completes 
the proof of the isomorphism \eqref{ETT}. 

\medskip \par 
Let us prove the remaining assertion. 
Denote the distinguished section 
$(q_2^{*}w_0) \otimes e^{\tau}$ 
of the integrable connection 
$\CK =(q_2^* \R_c) \otimes_{\sho_{X \times T}} 
\sho_{X \times T}e^{\tau}$ by $t$. 
Let $\Omega^{\bullet}_{X \times T/X} 
\otimes_{\sho_{X \times T}} \CK$ be the 
relative algebraic de Rham complex of 
$\CK$ associated to the morphism 
$q_1 : X \times T \longrightarrow X$. 
Then we have an isomorphism 
\begin{equation}
\CS_{A, c}^{\FT} \simeq \int_{q_1} \CK 
\simeq H^n \left\{ 
(q_1)_*(\Omega^{\bullet}_{X \times T/X} 
\otimes_{\sho_{X \times T}} \CK ) 
\right\}. 
\end{equation}
For a relative $n$-form $\omega \in (q_1)_* 
\Omega^{n}_{X \times T/X}$ denote by 
$\cl (\omega \otimes t)$ the section of 
$\CS_{A, c}^{\FT}$ which corresponds to 
the cohomology class $[(q_1)_*(\omega 
\otimes t)] \in H^n \{ 
(q_1)_*(\Omega^{\bullet}_{X \times T/X} 
\otimes_{\sho_{X \times T}} \CK ) \}$ 
by the above isomorphism. According to the 
result of \cite{H-R}, by the isomorphism 
\begin{equation}
\CH_{n}^{\rd} \simeq 
\shom_{\D_{X^{\an}}}
(( \CS_{A, c}^{\FT} )^{\an}, \sho_{X^{\an}}) 
\end{equation}
of local systems 
on $\Omega^{\an}$, 
a family of rapid decay cycles 
$\gamma \in \CH_{n}^{\rd}$ is 
sent to the section 
\begin{equation}
\left[ ( \CS_{A, c}^{\FT} )^{\an} \ni 
f \otimes \cl (\omega \otimes t) 
\longmapsto \left\{ \Omega^{\an} \ni z 
\longmapsto f(z) \int_{\gamma^z} 
\exp (\sum_{j=1}^N z_j x^{a(j)}) 
x_1^{c_1-1} \cdots x_n^{c_n-1} 
\omega \right\} 
\right] 
\end{equation} 
($f \in \sho_{X^{\an}}$) 
of $\shom_{\D_{X^{\an}}}
(( \CS_{A, c}^{\FT} )^{\an}, \sho_{X^{\an}})$. 
Then the remaining assertion 
follows from the lemma below. 
This completes the proof. 
\end{proof}

\begin{remark}\label{TRAN} 
When $0 \in \Int ( \Delta )$ the irrelevant 
components of $D$ in the proof above are 
the last $\PP^1$-bundles 
on $\overline{h_z^{-1}(0)} \cap D_i$ 
($1 \leq i \leq l=m$). By the construction of 
the morphism $Z= \tilde{Z_{\Sigma}} 
\longrightarrow Z_{\Sigma}$ we can easily 
see that for any $t \in \CC$ the hypersurface 
$\overline{h_z^{-1}(t)} \subset Z$ intersects 
them transversally. 
\end{remark} 

\begin{lemma}  
By the morphism 
\begin{equation}
\M_{A,c} \longrightarrow 
\CS_{A, c}^{\FT} \simeq H^n \left\{ 
(q_1)_*(\Omega^{\bullet}_{X \times T/X} 
\otimes_{\sho_{X \times T}} \CK ) 
\right\}
\end{equation}
the canonical section $u=[1] \in \M_{A,c}$ 
is sent to the cohomology class 
$\cl ( (dx_1 \wedge \cdots 
\wedge dx_n) \otimes t)$. 
\end{lemma}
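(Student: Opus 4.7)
The plan is to trace the canonical section $u=[1]\in\M_{A,c}$ step by step through the chain of isomorphisms and morphisms producing the map $\M_{A,c}\to \CS_{A,c}^{\FT}\simeq \int_{q_1}\CK$. First, under the Fourier transform identification $\M_{A,c}\simeq \N_{A,c}^{\FT}$ (which at the level of cyclic presentations is induced by the ring isomorphism $\GFT$), the generator $u$ corresponds to $v=[1]\in \N_{A,c}$. Applying Theorem \ref{KAL} of Katz-Laumon, $u$ is represented in $\int_{p_1}\bigl((p_2^*\N_{A,c})\otimes_{\sho_{X\times Y}}\sho_{X\times Y}e^{\sigma}\bigr)$ by the cohomology class of the top relative $N$-form $(d\zeta_1\wedge\cdots\wedge d\zeta_N)\otimes(p_2^*v)\otimes e^{\sigma}$ in the relative algebraic de Rham complex for $p_1$.

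Next, I would push this class forward by $\Psi\colon \N_{A,c}\to\CS_{A,c}$ (after $p_2^*$ and $\otimes e^{\sigma}$): since $\Psi(v)=w=j_*(1_{Y\longleftarrow T}\otimes w_0)$, the image is the class of $(d\zeta_1\wedge\cdots\wedge d\zeta_N)\otimes(p_2^*w)\otimes e^{\sigma}$ in $\int_{p_1}\bigl((p_2^*\CS_{A,c})\otimes e^{\sigma}\bigr)$. I would then apply the base change for the Cartesian square formed by $p_2,q_2$ and $j,\tilde j=\id_X\times j$, giving $p_2^*\int_j\R_c\simeq \int_{\tilde j}q_2^*\R_c$, together with the projection formula and the elementary identity $\tilde j^*e^{\sigma}=e^{\tau}$. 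This yields $(p_2^*\CS_{A,c})\otimes e^{\sigma}\simeq \int_{\tilde j}\CK$, and by transitivity of $\D$-module direct image we recover $\CS_{A,c}^{\FT}\simeq \int_{p_1\circ\tilde j}\CK=\int_{q_1}\CK$. Under these identifications, the section $p_2^*w\otimes e^{\sigma}$ goes to $\tilde j_*\bigl(1_{X\times Y\longleftarrow X\times T}\otimes(q_2^*w_0)\otimes e^{\tau}\bigr)$, where $1_{X\times Y\longleftarrow X\times T}$ is the analogue of $1_{Y\longleftarrow T}$ for $\tilde j$ and contains the factor $(dx_1\wedge\cdots\wedge dx_n)\otimes(d\zeta_1\wedge\cdots\wedge d\zeta_N)^{\otimes -1}$.

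Finally, combining Steps 1--3, the factor $d\zeta_1\wedge\cdots\wedge d\zeta_N$ used to represent the class along $p_1$ cancels against the inverse relative volume form $(d\zeta_1\wedge\cdots\wedge d\zeta_N)^{\otimes -1}$ built into $1_{X\times Y\longleftarrow X\times T}$, leaving exactly $(dx_1\wedge\cdots\wedge dx_n)$ as the relative volume on $T$ and the section $t=(q_2^*w_0)\otimes e^{\tau}$ of $\CK$. Hence the image of $u$ is $\cl\bigl((dx_1\wedge\cdots\wedge dx_n)\otimes t\bigr)$. The main obstacle is the bookkeeping in the last paragraph: one must verify that the canonical sections of the various transfer bimodules are compatible under base change and transitivity, so that the relative volume forms cancel in precisely the stated way. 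This is a routine but delicate check against the definition $\D_{Y\longleftarrow T}=\Omega_T\otimes_{\sho_T}\D_{T\longrightarrow Y}\otimes_{j^{-1}\sho_Y}j^{-1}\Omega_Y^{\otimes -1}$ and the standard conventions of \cite[Chapter 1]{H-T-T}.
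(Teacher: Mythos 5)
Your proposal is correct and follows essentially the same route as the paper: identify the image of $u$ as $w=j_*(1_{Y\longleftarrow T}\otimes w_0)$, represent it via Malgrange's proof of the Katz--Laumon theorem by the class of $(d\zeta_1\wedge\cdots\wedge d\zeta_N)\otimes(p_2^*w)\otimes e^{\sigma}$, and then track the cancellation of the $\zeta$-volume forms against $(d\zeta_1\wedge\cdots\wedge d\zeta_N)^{\otimes-1}$ inside the transfer bimodule under the identification $\int_{p_1}\int_{\tilde j}\CK\simeq\int_{q_1}\CK$. The ``routine but delicate check'' you defer at the end is exactly the part the paper writes out, using a free resolution $\CP^{\cdot}\simto\CK$, flatness of $\D_{Y\longleftarrow T}$ over $\D_T$, and the isomorphism $j^{-1}\Omega_Y^{N+\cdot}\otimes_{j^{-1}\sho_Y}\D_{Y\longleftarrow T}\simeq\Omega_T^n$.
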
 
\begin{proof} 
First note that the morphism $\Psi^{\FT}(X): 
\M_{A,c}(X) \simeq M_{A,c} \simeq 
N_{A,c}^{\FT} \longrightarrow 
\CS_{A,c}^{\FT}(X) \simeq \CS_{A,c}(Y)$ 
sends the canonical generator $u=[1] 
\in \M_{A,c}(X)$ to $w= 
j_*(1_{Y \longleftarrow T} 
\otimes w_0) \in \CS_{A,c}(Y)$. 
On the other hand, by \eqref{KLS} 
we have an isomorphism 
\begin{equation} 
\CS_{A, c}^{\FT} \simeq H^N \left[ (p_1)_* 
\left\{ \Omega^{\bullet}_{X \times Y/X} 
\otimes_{\sho_{X \times Y}} 
(p_2^* \CS_{A, c}) \otimes_{\sho_{X \times Y}} 
\sho_{X \times Y}e^{\sigma} \right\} \right].  
\end{equation} 
Then by Malgrange's simple proof 
\cite[page 135]{Malgrange} of Theorem 
\ref{KAL}, via this isomorphism the section 
$w \in \CS_{A,c}^{\FT}(X) \simeq \CS_{A,c}(Y)$ 
corresponds to the cohomology class 
\begin{equation} 
\left[ (p_1)_* 
\left\{ (d \zeta_1 \wedge \cdots \wedge d \zeta_N) 
\otimes (p_2^* w) \otimes 
e^{\sigma} \right\} \right].  
\end{equation} 
Let $\tilde{j}: X \times T \hookrightarrow 
X \times Y$ be the embedding induced by $j$. 
By the isomorphism 
\begin{equation} 
\CS_{A, c}^{\FT} \simeq H^N \left[ (p_1)_* 
\left\{ \Omega^{\bullet}_{X \times Y/X} 
\otimes_{\sho_{X \times Y}} 
\tilde{j}_* \left( 
\D_{X \times Y \longleftarrow X \times T} 
\otimes_{X \times T} \CK \right) 
\right\} \right] 
\end{equation} 
the above cohomology class corresponds to 
the one 
\begin{equation} 
\rho := \left[ (p_1)_* 
\left\{ (d \zeta_1 \wedge \cdots \wedge d \zeta_N) 
\otimes \tilde{j}_*(1_{X \times Y \longleftarrow 
X \times T} \otimes t) \right\} \right], 
\end{equation} 
where the section $1_{X \times Y \longleftarrow 
X \times T} \in \D_{X \times Y \longleftarrow 
X \times T}$ is defined similarly to 
$1_{Y \longleftarrow T} \in 
\D_{Y \longleftarrow T}$. Then it suffices 
to show that via the isomorphism 
\begin{equation} 
\CS_{A, c}^{\FT} \simeq 
\int_{p_1} \int_{\tilde{j}} \CK 
\simeq \int_{q_1} \CK 
\end{equation} 
the cohomology class $\rho$ is 
sent to $\cl ( (dx_1 \wedge 
\cdots \wedge dx_n) \otimes t)= 
[(q_1)_* \{  (dx_1 \wedge 
\cdots \wedge dx_n) \otimes t \} ]$ in 
\begin{equation}
\int_{q_1} \CK 
\simeq H^n \left\{ 
(q_1)_*(\Omega^{\bullet}_{X \times T/X} 
\otimes_{\sho_{X \times T}} \CK ) 
\right\}. 
\end{equation}
Since $X$ and $X \times Y$ are affine, 
we have only to prove that via the 
isomorphism 
\begin{equation}
H^N \Gamma (X \times Y; 
\Omega^{\bullet}_{X \times Y/X} 
\otimes_{\sho_{X \times Y}} 
\int_{\tilde{j}} \CK ) \simeq  
H^n \Gamma (X \times T; 
\Omega^{\bullet}_{X \times T/X} 
\otimes_{\sho_{X \times T}} \CK )
\end{equation}
the cohomology class 
\begin{equation} 
\left[ (d \zeta_1 \wedge \cdots \wedge d \zeta_N) 
\otimes \tilde{j}_*(1_{X \times Y \longleftarrow 
X \times T} \otimes t) \right] 
\end{equation} 
is sent to  
$ [ (dx_1 \wedge 
\cdots \wedge dx_n) \otimes t ]$. 
Indeed, we have isomorphisms 
\begin{eqnarray}
&  & 
H^0 \Gamma (X \times Y; 
\Omega^{N+ \bullet}_{X \times Y/X} 
\otimes_{\sho_{X \times Y}} 
\int_{\tilde{j}} \CK )
\\
 & \simeq & 
H^0 \Gamma (X \times Y; 
p_2^{-1} \Omega^{N+ \bullet}_{Y} 
\otimes_{p_2^{-1} \sho_{Y}} \tilde{j}_* 
(q_2^{-1} \D_{Y \longleftarrow T} 
\otimes_{q_2^{-1} \D_{T}} \CK) )
\\
 & \simeq & 
H^0 \Gamma (X \times Y; \tilde{j}_* \{ 
\tilde{j}^{-1} p_2^{-1} \Omega^{N+ \bullet}_{Y} 
\otimes_{\tilde{j}^{-1} p_2^{-1} \sho_{Y}}  
(q_2^{-1} \D_{Y \longleftarrow T} 
\otimes_{q_2^{-1} \D_{T}} \CK) \} )
\\
 & \simeq & 
H^0 \Gamma (X \times T; 
q_2^{-1}(j^{-1} \Omega^{N+ \bullet}_{Y} 
\otimes_{j^{-1} \sho_{Y}} \D_{Y \longleftarrow T}) 
\otimes_{q_2^{-1} \D_{T}} \CK )
\end{eqnarray}
by which the element 
$[(d \zeta_1 \wedge \cdots \wedge d \zeta_N) 
\otimes \tilde{j}_*(1_{X \times Y \longleftarrow 
X \times T} \otimes t)] $ is sent to the one 
$[q_2^{-1} \{ j^{-1} 
( d \zeta_1 \wedge \cdots \wedge d \zeta_N )
\otimes 1_{Y \longleftarrow T} \} \otimes t]$. 
Let $\CP^{\bullet} \simto \CK$ be a free 
resolution of the left $\D_{X \times T}$-module 
$\CK$. Since $X \times T$ is affine, 
we obtain a surjective homomorphism 
\begin{equation} 
\Gamma (X \times T; \CP^{0}) 
\longrightarrow 
\Gamma (X \times T; \CK ) 
\end{equation} 
and can take a lift $\hat{t} \in 
\Gamma (X \times T; \CP^{0})$ 
of $t \in \Gamma (X \times T; \CK )$. 
Moreover by the flatness of the right 
$\D_T$-module $\D_{Y \longleftarrow T}$  
and the well-known formula 
\begin{equation} 
j^{-1} \Omega^{N+ \bullet}_{Y} 
\otimes_{j^{-1} \sho_{Y}} \D_{Y \longleftarrow T} 
\simeq 
j^{-1} \Omega^{N}_{Y} 
\otimes^L_{j^{-1} \D_{Y}} \D_{Y \longleftarrow T}
\simeq \Omega_T^n 
\end{equation} 
there exists an isomorphism 
\begin{eqnarray}
 &  
H^0 \Gamma (X \times T; 
q_2^{-1}(j^{-1} \Omega^{N+ \bullet}_{Y} 
\otimes_{j^{-1} \sho_{Y}} \D_{Y \longleftarrow T}) 
\otimes_{q_2^{-1} \D_{T}} \CK )
 & 
\\ 
& \simeq 
H^0 \Gamma (X \times T; 
q_2^{-1}(j^{-1} \Omega^{N+ \bullet}_{Y} 
\otimes_{j^{-1} \sho_{Y}} \D_{Y \longleftarrow T}) 
\otimes_{q_2^{-1} \D_{T}} \CP^{\bullet} )
 & 
\\
& \simeq 
H^0 \Gamma (X \times T; 
q_2^{-1} \Omega_T^n  
\otimes_{q_2^{-1} \D_{T}} \CP^{\bullet} )
 & 
\end{eqnarray}
by which $[q_2^{-1} \{ j^{-1} 
( d \zeta_1 \wedge \cdots \wedge d \zeta_N )
\otimes 1_{Y \longleftarrow T} \} \otimes t]$ 
is sent to $[q_2^{-1} (dx_1 \wedge 
\cdots \wedge dx_n) \otimes \hat{t}]$. 
Similarly, by the isomorphism 
\begin{equation}
H^0 \Gamma (X \times T; 
q_2^{-1} \Omega_T^n  
\otimes_{q_2^{-1} \D_{T}} \CP^{\bullet} )
\simeq  
H^0 \Gamma (X \times T; 
\Omega^{n+ \bullet}_{X \times T/X} 
\otimes_{\sho_{X \times T}} \CK ) 
\end{equation}
the element $[q_2^{-1} (dx_1 \wedge 
\cdots \wedge dx_n) \otimes \hat{t}]$ is sent to 
$[(dx_1 \wedge \cdots \wedge dx_n) \otimes t]$. 
This completes the proof. 
\end{proof} 

As a corollary of Theorem \ref{Main}, 
we recover the following Saito and Schulze-Walther's 
construction of Adolphson's 
confluent $A$-hypergeometric $\D$-module 
$\M_{A,c}$ on $\Omega \subset X=\CC^A$. 

\begin{corollary} 
(Saito \cite{S1} and Schulze-Walther 
\cite{SW1}, \cite{SW2}) 
Assume that the parameter vector 
$c \in \CC^n$ is nonresonant. Then 
we have an isomorphism $\M_{A,c} \simto 
\CS_{A,c}^{\FT}$ of integrable connections 
on $\Omega$. In particular, 
$\M_{A,c}$ is an irreducible 
connection there. 
\end{corollary}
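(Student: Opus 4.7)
The plan is to treat the isomorphism and the irreducibility separately, both relying almost entirely on results already established in the paper. On the open set $\Omega$, Adolphson's result (cited before Theorem \ref{ADL}) shows that $\M_{A,c}|_\Omega$ is an integrable connection, and the surjective morphism $\M_{A,c} \twoheadrightarrow \CS_{A,c}^{\FT}$ obtained just after Lemma \ref{IRR} implies that $\CS_{A,c}^{\FT}|_\Omega$ is also an integrable connection. To prove the isomorphism, I would compare generic ranks: Theorem \ref{ADL} asserts that the solution local system of $\M_{A,c}|_\Omega$ has rank $\Vol_{\ZZ}(\Delta)$, while by Theorem \ref{HAR} the solution local system of $\CS_{A,c}^{\FT}|_\Omega$ is $\CH_n^{\rd}$, and Theorem \ref{Main} identifies these two local systems. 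Hence both integrable connections have $\sho_\Omega$-rank equal to $\Vol_{\ZZ}(\Delta)$, and the given surjection between locally free coherent sheaves of the same rank on the connected smooth variety $\Omega$ is automatically an isomorphism.

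For the irreducibility, I would start from Lemma \ref{IRR}, which asserts that $\CS_{A,c}$ is irreducible as a regular holonomic $\D_Y$-module. Since the algebraic Fourier transform $\FT : D(Y) \simto D(X)$ is an auto-equivalence of the category of holonomic $\D$-modules on the affine space $\CC^N$, the $\D_X$-module $\CS_{A,c}^{\FT}$ is irreducible as well. Any proper nonzero sub-integrable connection $\F \subsetneq \CS_{A,c}^{\FT}|_\Omega$ would, via the canonical morphism from $\CS_{A,c}^{\FT}$ to the direct image of its restriction along the inclusion $\Omega \hookrightarrow X$, generate a proper nonzero sub-$\D_X$-module of $\CS_{A,c}^{\FT}$, contradicting this irreducibility. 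Combined with the already-proved isomorphism $\M_{A,c}|_\Omega \simeq \CS_{A,c}^{\FT}|_\Omega$, this yields the irreducibility of $\M_{A,c}|_\Omega$.

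The main obstacle is the irreducibility step. The rank comparison is essentially linear algebra once Theorem \ref{Main} is granted, but the irreducibility relies on the standard but non-elementary preservation of irreducibility under the algebraic Fourier transform on $\CC^N$, and on carefully arguing that the irreducibility of the global holonomic $\D_X$-module $\CS_{A,c}^{\FT}$ descends to irreducibility of its restriction to the open dense subset $\Omega$ where it is an integrable connection. This descent uses the density of $\Omega$ in $X$ and the functoriality of the restriction–direct image adjunction, so that every proper sub-integrable connection on $\Omega$ lifts to a proper $\D_X$-submodule of $\CS_{A,c}^{\FT}$ globally.
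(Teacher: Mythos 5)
Your proposal is correct and matches the paper's intended derivation: the paper states this corollary without a separate proof precisely because it follows from the surjection $\M_{A,c}\twoheadrightarrow \CS_{A,c}^{\FT}$ on $\Omega$, the rank identification supplied by Theorem \ref{Main} (together with Theorem \ref{ADL}), and the irreducibility of $\CS_{A,c}$ from Lemma \ref{IRR} transported through the Fourier transform and restricted to the open dense set $\Omega$. Your two auxiliary observations (a surjection of equal-rank locally free sheaves is an isomorphism, and irreducibility of a holonomic module descends to a nonvanishing open restriction via the adjunction into $j_*j^{-1}$) are exactly the standard facts the authors are implicitly invoking.
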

This result was first obtained in Saito \cite{S1} 
and Schulze-Walther \cite{SW1}, \cite{SW2} 
by using totally different methods. 
In fact, Saito \cite{S1} proved moreover that 
we have an isomorphism $\M_{A,c} \simto 
\CS_{A,c}^{\FT}$ on the whole $X$. 

\begin{remark}
Since $\CS_{A, c}$ 
is regular holonomic 
by a theorem of Hotta \cite{Hotta}, 
it is also regular at infinity in 
the sense of Daia \cite{Daia}. Then 
by using the Fourier-Sato transforms 
(see \cite{Malgrange}) 
we can apply the main theorem of 
Daia \cite{Daia} to get another 
sheaf-theoretical (or functorial) 
construction of the sheaf 
$\CH_{n}^{\rd}  \simeq 
\shom_{\D_{X^{\an}}} 
((\CS_{A, c}^{\FT})^{\an}, \sho_{X^{\an}})$. 
This construction is valid  
even when the parameter $c \in \CC^n$ is 
resonant. However if $c \in \CC^n$ 
is resonant, we cannot prove that the 
morphism \eqref{EQI} is an isomorphism. 
Namely for such $c \in \CC^n$, 
the sheaf $\CH_{n}^{\rd}  \simeq 
\shom_{\D_{X^{\an}}} 
((\CS_{A, c}^{\FT})^{\an}, \sho_{X^{\an}})$ 
may be different from the one 
$\shom_{\D_{X^{\an}}} 
((\M_{A, c})^{\an}, \sho_{X^{\an}})$ 
of confluent $A$-hypergeometric functions. 
\end{remark} 

\begin{example}
Assume that $n=1$ and $T=\CC^*_x$. 
\par \noindent (i) If $A= \{ 1,-1 \} \subset \ZZ$ 
our integral representation of the $A$-hypergeometric 
functions $u(z_1,z_2)$ on $\CC^2_z$ is 
\begin{equation}
u(z_1,z_2)= \int_{\gamma^z} 
\exp (z_1x + \frac{z_2}{x}) 
x^{c-1} dx. 
\end{equation}
Restricting the function $u(z_1,z_2)$ 
to $\CC_t$ by the inclusion 
map $\CC_t \hookrightarrow \CC^2_z$, 
$t \longmapsto (\frac{t}{2}, -\frac{t}{2})$ we obtain 
the classical Bessel function 
\begin{equation}
v(t)= \frac{1}{2 \pi i} 
\int_{\gamma^{(\frac{t}{2}, -\frac{t}{2})}} 
\exp (\frac{tx}{2} - \frac{t}{2x}) 
x^{- \nu -1} dx
\end{equation}
for the parameter $\nu =-c$. 
Here $\gamma^{(\frac{t}{2}, -\frac{t}{2})} 
\subset \CC$ is the path which comes from 
infinity along the line ${\rm arg} x= - \pi$, 
turns around the origin and goes back to 
infinity along ${\rm arg} x= \pi$. 
\par \noindent (ii) If $A= \{ 3, 1 \} \subset \ZZ$ 
our integral representation of the $A$-hypergeometric 
functions $u(z_1,z_2)$ on $\CC^2_z$ is 
\begin{equation}
u(z_1,z_2)= \int_{\gamma^z} 
\exp (z_1x^3 + z_2x) 
x^{c-1} dx. 
\end{equation}
Restricting the function $u(z_1,z_2)$ 
to $\CC_t$ by the inclusion 
map $\CC_t \hookrightarrow \CC^2_z$, 
$t \longmapsto (\frac{1}{3}, -t)$ we obtain 
the classical Airy function 
\begin{equation}
v(t)= \frac{1}{2 \pi i} 
\int_{\gamma^{(\frac{1}{3}, -t)}} 
\exp (\frac{x^3}{3} - tx) dx
\end{equation}
for $c=1$. 
Here $\gamma^{(\frac{1}{3}, -t)} 
\subset \CC$ is the path which comes from 
infinity along the line ${\rm arg} 
x= - \frac{\pi}{3}$ and goes back to 
infinity along ${\rm arg} x= \frac{\pi}{3}$. 
\end{example}

\section{Asymptotic expansions at infinity of 
confluent $A$-hypergeometric functions}\label{sec:5}

In this section, assuming the condition 
$0 \in \Int (\Delta )$ 
we construct natural bases of the rapid 
decay homology groups $(\CH_{n}^{\rd})_z \simeq 
H_n^{\rd}(U_z; \CK_z^* )$ and apply them to prove a 
formula for the asymptotic expansions 
at infinity of Adolphson's 
confluent $A$-hypergeometric functions.

\subsection{Preliminary results}

For the construction of the bases of the rapid 
decay homology groups, we first prove 
some preliminary results. 

\begin{definition}
We define a subset $\Omega_0$ of $\Omega \subset 
X= \CC_z^N$ by: $z \in \Omega_0$ 
$\Longleftrightarrow$ 
$z \in \Omega$ and the Laurent polynomial 
$h_z(x)= \sum_{j=1}^N z_jx^{a(j)}$ has only 
non-degenerate (Morse) critical points in 
$T=( \CC^*)^n$. 
\end{definition}
It is clear that $\Omega_0 \subset X= \CC_z^N$ 
is stable by the multiplication of $\CC^*$ 
(i.e. homothety) on $X= \CC_z^N$. Let $b(1), b(2), 
\ldots, b(n) \in A$ be elements of $A$ such that 
$\{ b(1), b(2), \ldots, b(n) \}$ is a basis of 
the vector space $\RR^n$. By our assumption 
that $A$ generates $\ZZ^n$, we can take such 
elements of $A$. 

\begin{proposition}
Let $h(x)= \sum_{j=1}^N z_jx^{a(j)}$ be a Laurent 
polynomial with support in 
$A \subset \ZZ^n$ on $T=( \CC^*)^n$. 
Assume that $h$ is non-degenerate i.e. 
$z=(z_1, z_2, \ldots, z_N) 
\in \Omega$. Then for generic $\alpha =( \alpha_1, \alpha_2, 
\ldots, \alpha_n) \in \CC^n$ the perturbation 
\begin{equation}
\tilde{h}(x)=h(x)- \sum_{i=1}^n \alpha_i x^{b(i)} 
\end{equation}
of $h$ is non-degenerate and has only 
non-degenerate (Morse) critical points in $T$. 
\end{proposition}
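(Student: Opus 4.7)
The plan is to establish the two required properties of $\tilde{h}$ separately, each on a Zariski open dense subset of the parameter space $\CC^n \ni \alpha$, and then intersect.

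First, non-degeneracy in the sense of Definition \ref{AND} is handled by openness alone. Since each $b(i)$ belongs to $A$, the Laurent polynomial $\tilde{h}$ still has support in $A$, and its coefficient vector $\tilde{z} \in \CC^A$ depends affinely on $\alpha$. Because $\Omega \subset \CC^A$ is Zariski open and $\tilde{z}(0) = z \in \Omega$, the preimage $\{ \alpha \in \CC^n \mid \tilde{z}(\alpha) \in \Omega \}$ is a non-empty, hence dense, Zariski open subset of $\CC^n$. Generic $\alpha$ therefore keeps $\tilde{h}$ non-degenerate.

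The heart of the argument is the Morse condition. I would introduce the algebraic map
\begin{equation}
\Phi : T \times \CC^n \longrightarrow \CC^n, \qquad
\Phi_i(x,\alpha) = \frac{\partial \tilde{h}}{\partial x_i}(x),
\end{equation}
and aim to show that $\Phi$ is a submersion on all of $T \times \CC^n$. The crucial calculation is that the partial Jacobian of $\Phi$ with respect to $\alpha$ factors as $-D^{-1} B E$, where $D = \mathrm{diag}(x_1,\ldots,x_n)$, $B$ is the $n \times n$ matrix whose $(i,k)$-entry is the $i$-th coordinate of $b(k)$, and $E = \mathrm{diag}(x^{b(1)},\ldots,x^{b(n)})$. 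Since $x^{b(k)}$ and $x_i$ are nowhere vanishing on $T$, this matrix is invertible there precisely because $\det B \neq 0$, which in turn uses the hypothesis that $\{b(1),\ldots,b(n)\}$ is a basis of $\RR^n$. This is exactly why the basis $\{b(i)\}$ was chosen, and it is the only non-formal step in the argument.

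Granted this submersivity, $V := \Phi^{-1}(0) \subset T \times \CC^n$ is smooth of dimension $n$, and a short linear-algebra computation on $\ker d\Phi$ shows that for the projection $\pi : V \to \CC^n$, $(x,\alpha) \mapsto \alpha$, the differential $d\pi_{(x,\alpha)}$ is an isomorphism if and only if the ordinary $x$-Hessian of $\tilde{h}$ at $x$ is non-singular, i.e., $x$ is a Morse critical point of $\tilde{h}$. Applying Sard's theorem in its algebraic form (the critical values of an algebraic map between smooth varieties of the same dimension form a proper algebraic subset) shows that a generic $\alpha \in \CC^n$ is a regular value of $\pi$, so every critical point of $\tilde{h}$ in $T$ is Morse. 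Intersecting this open dense set with the one from the non-degeneracy step completes the proof.
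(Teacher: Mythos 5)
Your proposal is correct and follows essentially the same route as the paper: the invertibility of the matrix $B$ built from the basis $\{b(1),\ldots,b(n)\}$ is the paper's key computation too (there phrased via the dual basis $l_1,\ldots,l_n$), and your incidence variety $V$ is precisely the graph of the paper's map $(g_1,\ldots,g_n):T\to\CC^n$, so your application of Sard to the projection $\pi:V\to\CC^n$ coincides with the paper's application of the Bertini--Sard theorem to that map. The non-degeneracy step is likewise handled by the same openness observation, which the paper simply delegates to a cited lemma.
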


\begin{proof} 
It is clear that $\tilde{h}$ is non-degenerate for 
generic $\alpha \in \CC^n$ (see for example 
\cite[Lemma 5.2]{M-T-4}). 
Let $l_1, l_2, \ldots, 
l_n \in (\RR^n)^*$ be the dual basis of 
$b(1), b(2), \ldots, b(n)$ and set 
\begin{equation}
g_i(x)= \sum_{j=1}^N l_i(a(j)) z_j x^{a(j)-b(i)} 
\qquad (i=1,2, \ldots, n). 
\end{equation}
Note that for $a=(a_1, a_2, \ldots, a_n) \in \RR^n$ 
we have 
\begin{equation}
a_j= \sum_{i=1}^n l_i(a) b(i)_j 
\qquad (j=1,2, \ldots, n). 
\end{equation}
Then we can easily prove the equality 
\begin{equation}
(x_1 \frac{\partial \tilde{h}}{\partial x_1}, 
\ldots, 
x_n \frac{\partial \tilde{h}}{\partial x_n})= 
(x^{b(1)}(g_1- \alpha_1), \ldots, 
x^{b(n)}(g_n- \alpha_n)) \cdot B, 
\end{equation}
where $B \in GL_n( \CC )$ is an invertible matrix 
defined by $B=(b_{ij})_{i,j=1}^n 
=(b(i)_j)_{i,j=1}^n$. Hence we obtain 
\begin{equation}
\{ x \in T \  |  \ 
\frac{\partial \tilde{h}}{\partial x_1}(x)
= \cdots = 
\frac{\partial \tilde{h}}{\partial x_n}(x)=0 \} 
= \{ x \in T \  |  \ 
g_i(x)= \alpha_i \ (1 \leq i \leq n) \}. 
\end{equation}
Moreover degenerate critical points of $\tilde{h}$ 
in $T$ correspond to critical points $x \in T$ of 
the map $(g_1,g_2, \ldots, g_n):T \longrightarrow \CC^n$ 
such that $g_i(x)= \alpha_i$ ($1 \leq i \leq n$). 
By the Bertini-Sard theorem, generic 
$\alpha =( \alpha_1, \alpha_2, 
\ldots, \alpha_n) \in \CC^n$ are not such critical 
values. This implies that for generic 
$\alpha \in \CC^n$ the Laurent polynomial $\tilde{h}$ 
has no degenerate critical point. 
This completes the proof. 
\end{proof} 

\begin{corollary}\label{OPD} 
The subset $\Omega_0$ of $\Omega$ is open 
dense in $X= \CC_z^N$ and stable by 
the multiplication of $\CC^*$ 
(i.e. homothety) on $X= \CC_z^N$. 
\end{corollary}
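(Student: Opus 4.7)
The plan is to split the corollary into three separate assertions — invariance of $\Omega_0$ under the $\CC^*$-action, density of $\Omega_0$ in $X$, and openness of $\Omega_0$ in $X$ — and to deduce the first two directly from the preceding proposition, leaving openness as the only nontrivial point.

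For the $\CC^*$-stability, I would simply observe that $h_{\lambda z}(x) = \lambda\, h_z(x)$ and likewise $h_{\lambda z}^{\Gamma}(x) = \lambda\, h_z^{\Gamma}(x)$ for every face $\Gamma \prec \Delta$, so the critical set of $h_{\lambda z}$ on $T$ coincides with that of $h_z$, the Hessian matrix is simply multiplied by $\lambda \neq 0$, and all face-polynomial zero sets are preserved; hence both Morse-ness and Kouchnirenko non-degeneracy are invariant under homothecy. For density, I would use that $\Omega$ itself is a nonempty Zariski open subset of $X$, hence already classically dense: given $z_0 \in X$ and a classical neighborhood $V$ of $z_0$, pick any $z \in V \cap \Omega$; since each $b(i)$ lies in $A$, there exist indices $j_i$ with $a(j_i) = b(i)$, and the preceding proposition yields generic $\alpha \in \CC^n$ for which $h_z(x) - \sum_{i=1}^n \alpha_i x^{b(i)}$ is simultaneously non-degenerate and Morse. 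This perturbed polynomial equals $h_{z'}$ for the point $z' \in X$ obtained from $z$ by subtracting $\alpha_i$ from the coordinate $z_{j_i}$, so taking $|\alpha|$ sufficiently small yields $z' \in V \cap \Omega_0$.

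The main work is openness, which I expect to be the principal obstacle. My plan is to introduce the relative critical locus
\[
\Sigma = \left\{(z,x) \in \Omega \times T \ \bigg| \ x_i \frac{\partial h_z}{\partial x_i}(x) = 0,\ 1 \leq i \leq n\right\},
\]
which is Zariski closed in $\Omega \times T$, and to analyze the first projection $\pi : \Sigma \to \Omega$. Applying the Bernstein-Khovanskii-Kouchnirenko theorem to the system of critical-point equations on the torus, combined with the Kouchnirenko non-degeneracy condition that defines $\Omega$, shows that every fiber of $\pi$ is finite of constant cardinality $\Vol_{\ZZ}(\Delta)$, so $\pi$ is a finite (hence proper) morphism; this is the step that relies essentially on non-degeneracy, which prevents critical points from escaping to the toric boundary. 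The Morse locus is the complement in $\Sigma$ of the closed subset $Z$ cut out by the vanishing of $\det\bigl(\partial^{2} h_z/\partial x_i \partial x_j\bigr)$, so by properness of $\pi$ the image $\pi(Z)$ is closed in $\Omega$ and $\Omega_0 = \Omega \setminus \pi(Z)$ is open in $\Omega$, hence in $X$. The subtle point is justifying the constant finite degree of $\pi$; the remaining manipulations are routine.
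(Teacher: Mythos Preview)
Your arguments for $\CC^*$-stability and density are correct and match what the paper evidently has in mind (the paper gives no proof of this corollary, treating it as immediate from the preceding proposition). The point that deserves attention is openness, and here your argument has a genuine gap.

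You assert that the projection $\pi:\Sigma\to\Omega$ has fibers of constant cardinality $\Vol_\ZZ(\Delta)$, and infer that $\pi$ is finite, hence proper. Two problems. First, the cardinality claim is precisely Proposition~\ref{NCP}, which comes \emph{after} this corollary and is proved under the hypothesis $0\in\Int(\Delta)$; you never invoke that hypothesis, yet it is the standing assumption of Section~\ref{sec:5}. Without it the claim is false: take $n=1$, $A=\{1,2,3\}$, so $\Delta=[0,3]$ and $\Omega=\{z_3\neq 0\}$. At $z=(0,0,1)$ one has $h_z(x)=x^3$ with no critical points in $\CC^*$, so $z\in\Omega_0$ vacuously; but along the curve $z(\epsilon)=(\epsilon^2/3,\epsilon,1)\to z$ one finds a degenerate critical point at $x=-\epsilon/3\in\CC^*$, so $z(\epsilon)\notin\Omega_0$. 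Thus $\Omega_0$ is not open without $0\in\Int(\Delta)$. Second, even granting finite fibers of constant cardinality, this does not by itself imply that $\pi$ is a finite morphism (an open immersion has finite fibers too); what you really need is that $\Sigma$ is already closed in $\Omega\times Z_\Sigma$ for a toric compactification $Z_\Sigma\supset T$, i.e.\ that critical points do not escape to the boundary. You state this correctly as the geometric content of non-degeneracy, but it is exactly here that $0\in\Int(\Delta)$ enters: only then does every proper face of $\Delta$ avoid $0$, so Adolphson's non-degeneracy rules out limits on every toric stratum. Once you make that explicit, properness of $\pi$ follows from completeness of $Z_\Sigma$, and then $\Omega_0=\Omega\setminus\pi(Z)$ is open as you say.
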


\begin{proposition}\label{NCP} 
Assume that $0 \in \Int (\Delta )$. Then for 
any $z \in \Omega_0$ the Laurent polynomial 
$h_z(x)= \sum_{j=1}^N z_j x^{a(j)}$ has exactly 
$\Vol_{\ZZ}( \Delta )$ 
non-degenerate (Morse) critical points in $T$. 
\end{proposition}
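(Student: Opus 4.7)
The plan is to reduce this to Kouchnirenko's theorem on the global Milnor number of a Newton non-degenerate Laurent polynomial. Since $z \in \Omega_0 \subseteq \Omega$, each critical point of $h_z$ in $T$ is non-degenerate (Morse), hence a simple isolated zero of the system
\[
f_i(x) := x_i \frac{\partial h_z}{\partial x_i}(x) = \sum_{j=1}^{N} a_{ij} z_j x^{a(j)} \qquad (i = 1, \ldots, n),
\]
contributing multiplicity one to any scheme-theoretic count. It therefore suffices to show that the total intersection multiplicity of the zero scheme of $(f_1, \ldots, f_n)$ in $T$ equals $\Vol_{\ZZ}(\Delta) = n!\Vol(\Delta)$.

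Adolphson's non-degeneracy condition for $z \in \Omega$ says that for every proper face $\Gamma \prec \Delta$ not containing the origin, the face polynomial $h_z^{\Gamma}$ has no critical points in $T$. Under the hypothesis $0 \in \Int(\Delta)$, every proper face of $\Delta$ automatically fails to contain $0$, so $h_z$ is Newton non-degenerate in Kouchnirenko's sense (cf.\ \cite{Kushnirenko}). Kouchnirenko's global Milnor-number formula then yields exactly $n!\Vol(\Delta)$ for the total intersection multiplicity of the critical locus of $h_z$ in $T$, which is what we want. Combined with the first paragraph, this finishes the proof.

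The principal subtlety is that one cannot directly apply the naive Bernstein--Khovanskii--Kouchnirenko theorem to the system $\{f_i = 0\}$: the Newton polytope $NP(f_i) = \mathrm{conv}\{a(j) \in A : a_{ij} \neq 0, z_j \neq 0\}$ may be strictly smaller than $\Delta$ whenever some vertex of $\Delta$ lies on the coordinate hyperplane $\{y_i = 0\}$, and the mixed volume $MV(NP(f_1), \ldots, NP(f_n))$ is not visibly equal to $n!\Vol(\Delta)$. The hypothesis $0 \in \Int(\Delta)$ is what bridges this gap: it makes the normal fan $\Sigma_0$ of $\Delta$ complete, so the toric compactification $Z_{\Sigma}$ constructed in the proof of Theorem \ref{Main} is projective and $h_z$ extends to a meromorphic function with poles along every toric divisor. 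On $Z_{\Sigma}$ the critical scheme of $h_z$ can be realized as the zero locus of a global section of an appropriate line bundle whose top Chern class integral is $n!\Vol(\Delta)$, while Adolphson non-degeneracy ensures all zeros of that section lie in the open torus $T$. This is the toric intersection-theoretic content of Kouchnirenko's theorem in our situation, and it is the main step on which the whole proposition rests.
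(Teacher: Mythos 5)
Your argument is correct, and it takes a genuinely different route from the paper. You reduce the statement to Kouchnirenko's global (torus) Milnor-number theorem: since $z\in\Omega_0$ makes every critical point of $h_z$ a simple zero of the system $f_i=x_i\partial h_z/\partial x_i$, the number of critical points equals the total intersection multiplicity, and since $0\in\Int(\Delta)$ every proper face of $\Delta$ misses the origin, so Adolphson's condition is exactly Kouchnirenko's and the multiplicity is $n!\Vol(\Delta)$. The paper instead sidesteps both Kouchnirenko and the difficulty you correctly flag (that $NP(f_i)$ may be strictly smaller than $\Delta$) by a generic-linear-combination trick: it sets $(h_1,\ldots,h_n)=(f_1,\ldots,f_n)\cdot C$ for generic $C\in GL_n(\CC)$, observes that $0\in\Int(\Delta)$ then forces $NP(h_i)=\Delta$ for all $i$, checks that $\{h_1=\cdots=h_n=0\}$ is a non-degenerate complete intersection (emptiness of the face systems for proper faces comes from $z\in\Omega$, simplicity of the roots from $z\in\Omega_0$), and concludes by Bernstein's theorem. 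Your final paragraph essentially re-proves Kouchnirenko's theorem via toric intersection theory and could be dropped if you cite it as a black box; note only the small slip that the normal fan of a full-dimensional $\Delta$ is complete whether or not $0\in\Int(\Delta)$ --- what the hypothesis really buys is that $h_z$ has a pole along every toric divisor and that every proper face of $\Delta$ avoids the origin. Both routes also tacitly use that the Newton polytope of $h_z$ equals $\Delta$, i.e.\ that $z_j\neq0$ whenever $a(j)$ is a vertex of $\Delta$; this follows from non-degeneracy applied to the vertex faces and deserves a sentence in either write-up.
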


\begin{proof} 
Let us fix $z \in \Omega_0$ and set $h(x)=h_z(x)$. 
By an invertible matrix $C \in GL_n( \CC )$ we 
define new Laurent polynomials $h_1, h_2, \ldots, h_n$ 
on $T$ by 
\begin{equation}
(h_1, \ldots, h_n)= 
(x_1 \frac{\partial h}{\partial x_1}, 
\ldots, 
x_n \frac{\partial h}{\partial x_n}) \cdot C. 
\end{equation}
By our assumption $0 \in \Int (\Delta )$, 
taking sufficiently generic $C$ 
we may assume that all the Newton polytopes 
of $h_1, h_2, \ldots, h_n$ are equal to $\Delta$. 
Then for any face $\Gamma \prec \Delta$ of $\Delta$ the set 
\begin{equation}
\{ x \in T \  |  \ 
h_1^{\Gamma}(x)= \cdots =h_n^{\Gamma}(x)=0 \}
\end{equation}
coincides with that of the critical points of 
$h^{\Gamma}$ in $T$. In this correspondence 
for the special 
case $\Gamma = \Delta$, multiple roots of 
the equation $h_1(x)= \cdots =h_n(x)=0$ in $T$ 
correspond to degenerate critical points of 
$h:T \longrightarrow \CC$. But by our assumption 
$z \in \Omega_0$ there is no such point in $T$. 
Moreover by the non-degeneracy of $h$ 
($\Longleftrightarrow z \in \Omega$), for any 
face $\Gamma \prec \Delta$ of $\Delta$ such that 
$0 \notin \Gamma$ (i.e. $\Gamma \not= \Delta$ 
when $0 \in \Int ( \Delta )$) we have 
\begin{equation}
\{ x \in T \  |  \ 
h_1^{\Gamma}(x)= \cdots =h_n^{\Gamma}(x)=0 \}
= \emptyset. 
\end{equation}
This means that the ($0$-dimensional) 
subvariety $\{ x \in T \  |  \ 
h_1(x)= \cdots =h_n(x)=0 \}$ of 
$T$ is a non-degenerate complete intersection 
(for the definition, see 
\cite[Definition 2.7]{M-T-2} and 
\cite{Oka}). 
Then by Bernstein's theorem 
its cardinality is 
equal to $\Vol_{\ZZ}( \Delta )$. 
\end{proof}

\subsection{A basis of the rapid decay 
homology group}\label{SS-5}

From now on, assuming the condition 
$0 \in \Int (\Delta )$, for any $z \in \Omega_0$ 
we construct a natural basis of 
the rapid decay homology group $(\CH_{n}^{\rd})_z \simeq 
H_n^{\rd}(U_z; \CK_z^* )$ by using the (relative) 
twisted Morse theory for the function 
${\rm Re}(h_z):T^{\an} \longrightarrow \RR$. 
For the twisted Morse theory and 
its applications to period integrals, we refer to 
Aomoto-Kita \cite{A-K}, Pajitnov \cite{Paj} and 
Pham \cite{Pham}. Our construction of 
the basis is similar to the ones of 
Dubrovin \cite{Dubrovin} and 
Tanabe-Ueda \cite{T-U} 
in the untwisted case. Note that by our 
assumption $0 \in \Int (\Delta )$ any 
parameter vector $c \in \CC^n$ is nonresonant. 
This implies that Theorem \ref{Main} holds 
for any $c \in \CC^n$. 
For $z \in \Omega_0$ let $\alpha (i) \in T$ 
($1 \leq i \leq \Vol_{\ZZ}( \Delta )$) be 
the non-degenerate (Morse) critical points 
of the Laurent polynomial 
$h_z(x)= \sum_{j=1}^N z_jx^{a(j)}$ in 
Proposition \ref{NCP}. By the Cauchy-Riemann 
equation, they are also non-degenerate (Morse) 
critical points of the real-valued function 
${\rm Re}(h_z):T^{\an} \longrightarrow \RR$. 
We can observe this fact more explicitly 
by taking a holomorphic Morse coordinate around 
each $\alpha (i) \in T$ as follows. For a 
fixed $1 \leq i \leq \Vol_{\ZZ}( \Delta )$ 
let $y=(y_1, \ldots, y_n)$, 
$y_j= \xi_j + \sqrt{-1} \eta_j$ ($1 \leq j \leq  
n$) be a holomorphic Morse coordinate for 
$h_z$ around its critical point $\alpha (i) \in T$ 
such that $h_z(x)=h_z (\alpha (i))+ 
y_1^2+y_2^2+ \cdots +y_n^2$ 
in a neighborhood of $\alpha (i) \in T$. 
Since we have 
\begin{equation}
{\rm Re}(h_z)(x)={\rm Re}(h_z) ( \alpha (i))+
( \xi_1^2+ \cdots + \xi_n^2) 
-( \eta_1^2+ \cdots + \eta_n^2), 
\end{equation}
we regard the smooth submanifold $\{ \xi_1= \cdots 
= \xi_n=0 \}$ in it as the stable manifold of 
the gradient flow of the Morse function 
${\rm Re}(h_z):T^{\an} \longrightarrow \RR$ 
in a neighborhood of $\alpha (i) \in T^{\an}$ 
and denote it by $S_i$. By shrinking $S_i$ 
if necessary, we may assume that $\overline{S_i}$ 
is homeomorphic to the $n$-dimensional disk. 
For $1 \leq i \leq \Vol_{\ZZ}( \Delta )$ 
let $R_i \subset \CC$ be the ray in $\CC$ 
defined by 
\begin{equation} 
R_i = \{ \lambda \in \CC \ | \ 
{\rm Re} \lambda \leq {\rm Re} (h_z)( \alpha (i)), 
 \ {\rm Im} \lambda = {\rm Im} 
(h_z)( \alpha (i)) \}. 
\end{equation}
Namely $R_i$ emanates from the critical value 
$h_z( \alpha (i)) \in \CC$ of $h_z$ and 
goes to the left in the complex plane 
$\CC$ so that we have ${\rm Re} 
\lambda \longrightarrow - \infty$ along it. 
By shrinking the stable manifold $S_i$ 
if necessary, we may assume also that the image 
of $\overline{S_i} \subset T^{\an}$ by 
the map $h_z: T^{\an} \longrightarrow \CC$ 
is the closed interval 
\begin{equation} 
R_i^{\e} = \{ \lambda \in R_i \ | \ 
 {\rm Re} (h_z)( \alpha (i)) - \e \leq 
{\rm Re} \lambda \leq {\rm Re} 
(h_z)( \alpha (i)) \} 
\end{equation}
in $R_i$ for some $\e >0$ and 
$h_z( \partial S_i)$ is just the one point 
$\{ h_z( \alpha (i))- \e \}$ in $R_i$. 
We drag $\partial S_i \simeq S^{n-1}$ 
over the complement of $R_i^{\e}$ 
in $R_i$ to construct a tube 
$M_i \simeq (- \infty, 0] \times S^{n-1}$ 
in $T^{\an}$. Finally we set $\gamma_i:=
\overline{S_i} \cup M_i \subset T^{\an}$. 
From now we shall use the notations in 
the proof of Theorem \ref{Main}.  
Then by Proposition \ref{RTH}, for $U_z=T$ 
and the local system 
\begin{equation}
\LL = \CC_{T^{\an}}x_1^{c_1-1} \cdots x_n^{c_n-1}. 
\end{equation}
there exists an isomorphism 
\begin{equation}
H_n^{\rd}(U_z ; \CK_z^* ) \simeq 
H_n(T^{\an} \cup Q, Q; \iota_* \LL ). 
\end{equation}
Since $\gamma_i \subset T^{\an}$ is a singular 
$n$-chain in $T^{\an}$ whose boundary 
in the real oriented blow-up 
$\tilde{Z}$ is contained in 
$Q \subset \tilde{D}$, we obtain 
an element $[ \gamma_i ]$ of the relative 
twisted homology group 
$H_n(T^{\an} \cup Q, Q; \iota_* \LL )$. 
Namely $[ \gamma_i ] \in 
H_n(T^{\an} \cup Q, Q; \iota_* \LL )$ thus 
obtained is a rapid decay $n$-cycle 
in $T^{\an}$ for the function 
\begin{equation}
g_z(x)= \exp (h_z(x)) x_1^{c_1-1} \cdots x_n^{c_n-1}
\end{equation}
satisfying the conditions 
\begin{eqnarray}
(i)  : & S_i \subset \gamma_i, 
\\
(ii)  : & \gamma_i \setminus 
\overline{S_i} \subset 
\{ x \in T^{\an} \ | \ {\rm Re} (h_z) (x) < 
{\rm Re} (h_z)( \alpha (i)) - \varepsilon \} 
\quad \text{for some} \ \varepsilon >0. 
\end{eqnarray}

\begin{theorem}\label{BRH} 
In the situation as above (i.e. $0 \in \Int (\Delta )$ 
and $z \in \Omega_0$), 
the elements $[ \gamma_1], [ \gamma_2], \ldots, 
[ \gamma_{\Vol_{\ZZ}( \Delta )}] \in 
(\CH_{n}^{\rd})_z \simeq H_n^{\rd}
(U_z; \CK_z^* ) \simeq 
H_n(T^{\an} \cup Q, Q; \iota_* \LL )$ 
form a basis of the rapid 
decay homology group 
$H_n^{\rd}(U_z ; \CK_z^* ) \simeq 
H_n(T^{\an} \cup Q, Q; \iota_* \LL )$. 
\end{theorem}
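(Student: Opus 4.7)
By Theorem \ref{Main} combined with the Euler characteristic calculation in its proof, we have $\dim H_n^{\rd}(U_z; \CK_z^*) = \Vol_{\ZZ}( \Delta )$, which by Proposition \ref{NCP} equals the number $N$ of non-degenerate critical points $\alpha (1), \ldots, \alpha (N)$ of $h_z$ in $T$. Hence it suffices to prove that the $N$ classes $[\gamma_1], \ldots, [\gamma_N]$ are linearly independent in $H_n^{\rd}(U_z; \CK_z^*)$. The plan is to exploit the fact that $\mathrm{Re}(h_z): T^{\an} \longrightarrow \RR$ is a real Morse function whose critical set is exactly $\{ \alpha (1), \ldots, \alpha (N) \}$ and whose Morse index at each $\alpha (i)$ equals $n$: indeed, in the holomorphic Morse coordinate $y_j = \xi_j + \sqrt{-1} \eta_j$ used to define $S_i$, the function $\mathrm{Re}(h_z)$ takes the form $\mathrm{const} + (\xi_1^2 + \cdots + \xi_n^2) - (\eta_1^2 + \cdots + \eta_n^2)$.

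After a small generic perturbation of $z$ inside $\Omega_0$ (which does not affect the basis property, as both the rapid decay homology and the cycles $\gamma_k$ vary continuously with $z$), I may assume that the real parts $r_i := \mathrm{Re}(h_z)(\alpha (i))$ are pairwise distinct, and reorder so that $r_1 < r_2 < \cdots < r_N$. First I would consider the sublevel sets $W_a := \{ x \in T^{\an} \ | \ \mathrm{Re}(h_z)(x) \leq a \}$, together with their closures $\widetilde{W}_a \subset \widetilde{Z}$ which include the portion of the rapid decay locus $Q \subset \tilde{D}$ lying over directions where $\mathrm{Re}(h_z) \to -\infty$. By repeating the argument of Proposition \ref{RTH} in this relative setting, the pair $(\widetilde{W}_a, \widetilde{W}_a \cap Q)$ has a relative twisted homology which computes the rapid decay homology of $\CK_z^*$ restricted to $W_a$.

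The core technical step is to show that for small $\delta > 0$
\begin{equation*}
H_p( \widetilde{W}_{r_k + \delta}, \widetilde{W}_{r_k - \delta} \cup (\widetilde{W}_{r_k + \delta} \cap Q) ) \simeq \begin{cases} \CC \cdot [\gamma_k] & (p = n), \\ 0 & (p \neq n), \end{cases}
\end{equation*}
with twisted coefficients in the appropriate local system. This should follow from the local Morse lemma at $\alpha (k)$ (which attaches a single $n$-cell, since the index is $n$), combined with the key geometric property (ii) of $\gamma_k$: the ``skirt'' $\gamma_k \setminus \overline{S_k}$ already lies inside $\{ \mathrm{Re}(h_z) \leq r_k - \varepsilon \} \subset \widetilde{W}_{r_k - \delta}$, so it is killed in the relative homology, while $\overline{S_k}$ is exactly the attached $n$-cell.

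Granted this concentration-in-degree-$n$ statement, an induction on $k$ via the long exact sequence of the pair above shows that $[\gamma_1], \ldots, [\gamma_k]$ are linearly independent in the rapid decay homology of $\widetilde{W}_{r_k + \delta}$ for every $k$. Letting $k = N$ and $a \to +\infty$, and noting that the remaining directions of $\tilde{D}$ along which $\mathrm{Re}(h_z) \to +\infty$ are excluded from $Q$ and carry no rapid decay chains, I recover $H_n^{\rd}(U_z; \CK_z^*)$ and obtain the linear independence of $[\gamma_1], \ldots, [\gamma_N]$ there. The hardest part will be verifying the concentration-in-degree-$n$ statement above; this should go through by combining the local Morse model at each $\alpha (k)$ with excision and Mayer-Vietoris arguments in the spirit of Lemmas \ref{EC} and \ref{ECL}, once the rapid decay behavior of $\gamma_k$ at infinity is carefully matched to the local structure of $Q \subset \tilde{D}$.
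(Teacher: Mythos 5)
Your proposal is correct and follows essentially the same route as the paper's proof: both reduce to linear independence via the dimension count $\dim H_n^{\rd}=\Vol_{\ZZ}(\Delta)$, then run twisted Morse theory for ${\rm Re}(h_z)$ on the sublevel sets $T_t^{\an}$, using the vanishing for $t\ll 0$, the stabilization for $t\gg 0$, and the attachment of one relative $n$-cell $\overline{S_i}$ per critical point whose class lifts to $[\gamma_i]$. The only cosmetic differences are that the paper treats all critical points on a common level set simultaneously (so no perturbation of $z$ is needed) and deduces the exactness of the key short exact sequence from the global dimension count \eqref{VOL} together with \eqref{EQN1} and \eqref{EQN2}, rather than proving the degree-$n$ concentration locally first.
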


\begin{proof} 
First note that by \eqref{key} we have 
\begin{equation}\label{VOL} 
\dim 
H_n(T^{\an} \cup Q, Q; \iota_* \LL )= 
\Vol_{\ZZ}( \Delta )= \sharp \{ \alpha (i) \}. 
\end{equation}
For $t \in \RR$ we define an open 
subset $T_t^{\an} \subset T^{\an}$ 
of $T^{\an}$ by 
\begin{equation}
T_t^{\an}= \{ x \in T^{\an} \ | \ 
{\rm Re}(h_z) (x) <t \}. 
\end{equation}
Then by Remark \ref{TRAN} 
for any $t \in \RR$ the closure 
of $\partial T_t^{\an} 
\subset T^{\an}$ in $Z$ 
intersects each irrelevant 
divisor $D_i \subset Z$ transversally. 
This implies that for any $p \in \ZZ$ and 
$t \ll 0$ we have 
\begin{equation}\label{EQN1} 
H_p(T_t^{\an} \cup Q, Q; \iota_* \LL ) \simeq 0. 
\end{equation}
Moreover for any $p \in \ZZ$ and 
$t \gg 0$ we have an isomorphism 
\begin{equation}\label{EQN2} 
H_p(T_t^{\an} \cup Q, Q; \iota_* \LL ) \simeq 
H_p(T^{\an} \cup Q, Q; \iota_* \LL ). 
\end{equation}
Now let $- \infty < t_1 < t_2 < \cdots < t_r < 
+ \infty$ be the critical values of 
${\rm Re}(h_z):T^{\an} \longrightarrow \RR$. 
Then by Remark \ref{TRAN} for any $p \in \ZZ$ and 
$s_1, s_2 \in \RR$ such that 
$s_1 < s_2$, $[s_1, s_2] \cap \{ t_1, t_2, 
\ldots, t_r \} = \emptyset$ we have a natural 
isomorphism 
\begin{equation} 
H_p(T_{s_1}^{\an} \cup Q, Q; \iota_* \LL ) \simeq 
H_p(T_{s_2}^{\an} \cup Q, Q; \iota_* \LL ). 
\end{equation}
For $1 \leq j \leq r$ let 
$\alpha (i_1), \alpha (i_2), 
\ldots, \alpha (i_{n_j}) \in T^{\an}$ be the 
critical points of ${\rm Re}(h_z):T^{\an} 
\longrightarrow \RR$ such that 
${\rm Re}(h_z)(\alpha (i_q))=t_j$. 
Then, for sufficiently small 
$0 < \varepsilon \ll 1$ we obtain a short 
exact sequence 
\begin{eqnarray}
0 \longrightarrow  
H_n(T_{t_j - \varepsilon}^{\an} 
\cup Q, Q; \iota_* \LL )
  \longrightarrow 
H_n(T_{t_j - \varepsilon}^{\an} 
\cup ( \cup_{q=1}^{n_j} S_{i_q}) 
\cup Q, Q; \iota_* \LL )
\\
\longrightarrow   
\oplus_{q=1}^{n_j} 
H_n( \overline{S_{i_q}}, \partial S_{i_q} 
 ; \iota_* \LL ) \longrightarrow  0 
\end{eqnarray}
by induction on $j$ 
with the help of \eqref{EQN1} and 
the fact $H_p( \overline{S_{i_q}} , 
\partial S_{i_q}  ; \iota_* \LL ) 
\simeq 0$ $(p \not= n )$. 
Moreover there exist $[S_{i_q}] \in 
H_n( \overline{S_{i_q}}, 
\partial S_{i_q}  ; \iota_* \LL ) 
\simeq \CC$ which can be lifted 
to the elements 
$[ \gamma_{i_q}]$ of $H_n(T_{t_j - \varepsilon}^{\an} 
\cup ( \cup_{q=1}^{n_j} S_{i_q}) 
\cup Q, Q; \iota_* \LL ) \subset 
H_n(T^{\an} \cup Q, Q; \iota_* \LL )$. 
This implies that $[ \gamma_1], [ \gamma_2], \ldots, 
[ \gamma_{\Vol_{\ZZ}( \Delta )}] \in 
H_n(T^{\an} \cup Q, Q; \iota_* \LL )$ 
form a basis of 
$H_n(T^{\an} \cup Q, Q; \iota_* \LL )$. 
\end{proof} 

Note that for a connected 
open neighborhood $V$ of the point 
$z$ in $(\Omega_0)^{\an}$ 
the basis $[ \gamma_1], \ldots, 
[ \gamma_{\Vol_{\ZZ}( \Delta )}] \in 
(\CH_{n}^{\rd})_z$ constructed in Theorem \ref{BRH} 
can be naturally extended to a family 
of the bases $[ \gamma^w_1], \ldots, 
[ \gamma^w_{\Vol_{\ZZ}( \Delta )}] \in (\CH_{n}^{\rd})_w$ 
($w \in V$) i.e. a basis of the local system 
$\CH_{n}^{\rd}$ on $V$.  We can extend it so that 
$V \subset (\Omega_0)^{\an}$ is stable by 
the multiplication of the group 
$\RR_{>0}$ on $X= \CC^N$ and the rapid 
decay $n$-cycles $\gamma^w_1, \ldots, 
\gamma^w_{\Vol_{\ZZ}( \Delta )}$ 
($w \in V$) satisfy the conditions 
\begin{eqnarray}
(i)  : & S^w_i \subset \gamma^w_i, 
\\
(ii)  : & \gamma^w_i \setminus 
\overline{S^w_i} \subset 
\{ x \in T^{\an} \ | \ {\rm Re} (h_w) (x) < 
{\rm Re} (h_w)( \alpha (i)^w ) - \varepsilon \} 
\quad \text{for some} \ \varepsilon >0, 
\end{eqnarray}
where $S^w_i \subset T^{\an}$ is the stable 
manifold  of the gradient flow of 
${\rm Re}(h_w)$ passing through 
its $i$-th non-degenerate 
critical point $\alpha (i)^w \in T^{\an}$. 
For $1 \leq i \leq \Vol_{\ZZ}( \Delta )$ 
we define a confluent $A$-hypergeometric 
function $u_i$ on $V \subset (\Omega_0)^{\an}$ by 
\begin{equation}
u_i(w)= \int_{\gamma_i^w} 
\exp (\sum_{j=1}^N w_j x^{a(j)}) 
x_1^{c_1-1} \cdots x_n^{c_n-1} 
dx_1 \wedge \cdots \wedge dx_n  
\end{equation}
for $w \in V$.

\subsection{Asymptotic expansions 
at infinity}

Now by applying the higher-dimensional 
saddle point (steepest descent) method to holomorphic 
Morse coordinates around the critical points of 
${\rm Re}(h_w):T^{\an} \longrightarrow \RR$ 
($w \in V$) in $T^{\an}$ we obtain 
the following result. For $\delta >0$ let 
$\Lambda_{\delta} \subset \CC$ be the open 
sector in $\CC$ defined by 
$\Lambda_{\delta} = \{ \lambda \in \CC \ | \ 
- \delta < {\rm arg } \lambda < \delta \}$. 
By taking a sufficiently small $\delta >0$ 
such that $\lambda \cdot z \in V$ for any 
$\lambda \in \Lambda_{\delta}$ we set 
$\Lambda :=\Lambda_{\delta}$. 

\begin{theorem}\label{ASE} 
In the situation as above (i.e. 
$0 \in \Int ( \Delta )$), 
if $\delta >0$ is sufficiently small, 
for any $1 \leq i \leq \Vol_{\ZZ}( \Delta )$ 
and $\lambda \in \Lambda$ 
we have an asymptotic 
expansion: 
\begin{eqnarray}
u_i(\lambda \cdot z)=  
\int_{\gamma_i^{\lambda \cdot z}} 
\exp ( \lambda \sum_{j=1}^N z_j x^{a(j)}) 
x_1^{c_1-1} \cdots x_n^{c_n-1} 
dx_1 \wedge \cdots \wedge dx_n  
\\ 
 \ \sim \ ( \sqrt{-1})^n \alpha (i)_1^{c_1-1} \cdots  
\alpha (i)_n^{c_n-1} \times 
\exp (\lambda \cdot h_z( \alpha (i))) 
\\ 
\times    
\left\{ \frac{(2 \pi )^{\frac{n}{2}} }{\sqrt{H_i(z)}} 
\cdot \frac{1}{ \lambda^{\frac{n}{2}} } + 
\frac{\beta_1(z)}{ \lambda^{\frac{n}{2}+1} } + 
\frac{\beta_2(z)}{ \lambda^{\frac{n}{2}+2} } + 
\cdots \cdots \cdots 
\right\} 
\end{eqnarray}
as $| \lambda | \longrightarrow + \infty$ 
in the sector $\Lambda$, where 
$\beta_i(z) \in \CC$ are functions of $z$ and 
\begin{equation}
H_i(z)={\rm det} \left( 
\frac{\partial^2 h_z}{\partial x_j \partial x_k} 
\right)_{x= \alpha (i)}
\end{equation}
is the Hessian of $h_z$ at $x= \alpha (i) \in T^{\an}$. 
\end{theorem}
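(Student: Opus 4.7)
The plan is to apply the higher-dimensional saddle point (steepest descent) method, using the Morse-theoretic construction of $\gamma_i^{\lambda\cdot z}$ recalled before the theorem to localize the leading contribution of the integral to a small neighborhood of the critical point $\alpha(i)\in T$ of $h_z$. Since $h_{\lambda z}=\lambda h_z$ has exactly the same critical points as $h_z$ and $\Lambda$ is chosen small enough that the Morse picture of ${\rm Re}(\lambda h_z)$ deforms continuously from that of ${\rm Re}(h_z)$, we may take $\gamma_i^{\lambda\cdot z}=\overline{S_i^{\lambda\cdot z}}\cup M_i^{\lambda\cdot z}$ satisfying the two bullet conditions before the theorem, with $\varepsilon>0$ uniform for $\lambda\in\Lambda$ after possibly shrinking $\delta$.

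First I would estimate the contribution of the tube $M_i^{\lambda\cdot z}$. By condition (ii), ${\rm Re}(\lambda h_z(x)) \leq {\rm Re}(\lambda h_z(\alpha(i))) - \varepsilon |\lambda|$ holds on $M_i^{\lambda\cdot z}$ uniformly in $\lambda\in\Lambda$, so the integrand is bounded by $|\exp(\lambda h_z(\alpha(i)))|\cdot e^{-\varepsilon|\lambda|}$ times a polynomial in $|\lambda|$. This is exponentially smaller than every term of the claimed expansion, so only the contribution of the stable piece $S_i^{\lambda\cdot z}$ survives at each asymptotic order.

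Next I would introduce a holomorphic Morse coordinate $y=(y_1,\ldots,y_n)$ near $\alpha(i)$ with $h_z(x)-h_z(\alpha(i))=y_1^2+\cdots+y_n^2$. On $S_i^{\lambda\cdot z}$ (for $\lambda\in\Lambda$) the $y_j$ are essentially purely imaginary, so write $y_j=\sqrt{-1}\eta_j$ with $\eta_j\in\RR$. Setting
\begin{equation*}
x_1^{c_1-1}\cdots x_n^{c_n-1}\, dx_1\wedge\cdots\wedge dx_n = \phi(y)\, dy_1\wedge\cdots\wedge dy_n,
\end{equation*}
$\phi$ is holomorphic near $0$, and comparing the quadratic parts of $h_z$ in the two coordinate systems (if $y=A(x-\alpha(i))+O(|x-\alpha(i)|^2)$ then $A^{\top}A=\tfrac12(\partial^2 h_z/\partial x_j\partial x_k)_{\alpha(i)}$, hence $(\det A)^2=H_i(z)/2^n$) yields
\begin{equation*}
\phi(0)=\alpha(i)_1^{c_1-1}\cdots\alpha(i)_n^{c_n-1}\cdot\sqrt{2^n/H_i(z)}
\end{equation*}
for a suitable branch. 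After deforming $S_i^{\lambda\cdot z}$ onto the contour $y_j=\sqrt{-1}\eta_j$ (allowed since the integrand is holomorphic and this is the standard steepest-descent rotation), the local integral becomes
\begin{equation*}
\exp(\lambda h_z(\alpha(i)))\cdot(\sqrt{-1})^n\int_{\RR^n}\phi(\sqrt{-1}\eta)\,e^{-\lambda|\eta|^2}\,d\eta_1\cdots d\eta_n.
\end{equation*}
Taylor expanding $\phi(\sqrt{-1}\eta)$ at $\eta=0$ and applying the Gaussian moment integrals $\int_{\RR^n}\eta^{2\mu}e^{-\lambda|\eta|^2}d\eta=O(\lambda^{-n/2-|\mu|})$ produces the full asymptotic series. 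The leading term is $\phi(0)\cdot(\sqrt{-1})^n\cdot\pi^{n/2}/\lambda^{n/2}$, which after substitution of $\phi(0)$ recovers exactly
\begin{equation*}
(\sqrt{-1})^n\alpha(i)_1^{c_1-1}\cdots\alpha(i)_n^{c_n-1}\exp(\lambda h_z(\alpha(i)))\cdot\frac{(2\pi)^{n/2}}{\sqrt{H_i(z)}\,\lambda^{n/2}},
\end{equation*}
while the subsequent coefficients $\beta_k(z)$ are polynomial expressions in the higher Taylor coefficients of $\phi$ at the origin.

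The main technical obstacle is the uniformity of the expansion over the sector $\Lambda$ together with the justification that the global rapid decay cycle $\gamma_i^{\lambda\cdot z}$ may indeed be deformed continuously in $\lambda$: one must verify that as $\lambda$ rotates within $\Lambda$ no critical value of ${\rm Re}(\lambda h_z)$ collides with another, so that $S_i^{\lambda\cdot z}$ remains well-defined and admits a smooth deformation onto $\{y_j=\sqrt{-1}\eta_j\}$, and that the outgoing ray used to form the tube avoids Stokes directions so that condition (ii) persists with $\varepsilon$ independent of $\arg\lambda$. For $\delta$ sufficiently small this is automatic, and standard saddle-point theory (as in Pham or Fedoryuk) then yields the uniform asymptotic expansion with remainder of order $\lambda^{-n/2-K}$ for any $K$.
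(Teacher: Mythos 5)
Your proposal is correct and follows essentially the same route as the paper's proof: exponential suppression of the tube $M_i^{\lambda\cdot z}$ via condition (ii), passage to a holomorphic Morse coordinate, the Jacobian computation giving $\phi(0)=\alpha(i)_1^{c_1-1}\cdots\alpha(i)_n^{c_n-1}\sqrt{2^n/H_i(z)}$, and termwise Gaussian integration of the Taylor expansion. The only cosmetic difference is that the paper parametrizes the $\lambda$-dependent stable manifold by the rotated coordinates $y_j'=e^{\sqrt{-1}\theta/2}y_j$ (yielding $|\lambda|$ in the Gaussian) rather than deforming onto the fixed imaginary contour with complex $\lambda$; the two computations are equivalent.
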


\begin{proof} First, it is clear that for any 
$\lambda \in \Lambda$ the critical points of 
the function ${\rm Re} (h_{\lambda \cdot z}) 
={\rm Re} ( \lambda \cdot h_{z}) : T^{\an} 
\longrightarrow \RR$ are $\alpha (i)$ ($1 \leq i \leq 
\Vol_{\ZZ}( \Delta )$). 
Fix $1 \leq i \leq \Vol_{\ZZ}( \Delta )$. 
Let $y=(y_1, \ldots, y_n)$, 
$y_j= \xi_j + \sqrt{-1} \eta_j$ ($1 \leq j \leq 
n$) be the holomorphic Morse coordinate for 
the function $h_z$ around 
its $i$-th critical point $\alpha (i) \in T^{\an}$ 
such that $h_z(x)=h_z (\alpha (i))+ 
y_1^2+ \cdots +y_n^2$. 
For sufficiently small $\e >0$ we define an 
open neighborhood $W_{\e}$ of 
$\alpha (i) \in T^{\an}$ by 
$W_{\e}= \{ 
y=(y_1, \ldots, y_n) \ | \ |y_j| < \e \ 
(1 \leq j \leq n)  \} \simeq B(0; \e ) \times 
\cdots \times B(0; \e ) \subset T^{\an}$ and set 
\begin{equation}
S_i^z= \{ \xi_1= \cdots 
= \xi_n=0 \} = \{ \eta \in \RR^n \ | \ 
| \eta_j| < \e \ (1 \leq j \leq n)  \} \subset 
W_{\e}
\end{equation}
in it. Then $S_i^z$ is the stable manifold of 
the gradient flow of the function 
${\rm Re}(h_{z})$ passing through 
its non-degenerate 
critical point $\alpha (i) \in T^{\an}$. 
For $\lambda = | \lambda | e^{\sqrt{-1} \theta} 
\in \Lambda \subset \CC$ 
($- \delta < \theta ={\rm arg} \lambda 
< \delta$) we set 
$(y_1^{\prime}, \ldots, y_n^{\prime})
=(e^{\frac{\sqrt{-1} \theta}{2}}y_1, \ldots, 
e^{\frac{\sqrt{-1} \theta}{2}}y_n)$. 
Then the Laurent 
polynomial $h_{\lambda \cdot z} 
= \lambda \cdot h_z$ can be rewritten as 
\begin{equation}
h_{\lambda \cdot z}(x)= 
\lambda \cdot h_z( \alpha (i))
+ | \lambda | (y_1^{\prime})^2 + \cdots + 
| \lambda | (y_n^{\prime})^2. 
\end{equation}
By setting $y_j^{\prime}
= \xi_j^{\prime} + \sqrt{-1} \eta_j^{\prime}$ 
($1 \leq j \leq n$) we see also that the subset 
\begin{equation}
S_i^{\lambda \cdot z}=\{ \xi_1^{\prime}= 
\cdots = \xi_n^{\prime}=0 \} = 
\{ \eta^{\prime} \in \RR^n \ | \ 
| \eta_j^{\prime} | < \e \ (1 \leq j \leq n)  \} 
\subset W_{\e}
\end{equation}
of $W_{\e}$ is the stable manifold of 
the gradient flow of  
${\rm Re}(h_{\lambda \cdot z})$ through 
$\alpha (i) \in T^{\an}$. 
By our construction of the rapid decay 
$n$-cycle $\gamma^{\lambda \cdot z}_i$ 
we may assume that 
$S^{\lambda \cdot z}_i 
\subset \gamma^{\lambda \cdot z}_i$ and 
\begin{equation}
{\rm Re} (h_{\lambda \cdot z})(x) - 
{\rm Re} (h_{\lambda \cdot z})( \alpha (i)) 
< - \e^2 | \lambda |, \quad 
{\rm Im} (h_{\lambda \cdot z})(x) = 
{\rm Im} (h_{\lambda \cdot z})( \alpha (i)) 
\end{equation}
for any $x \in \gamma^{\lambda \cdot z}_i 
\setminus \overline{S^{\lambda \cdot z}_i}$. 
We may assume also that for any 
$\lambda, \lambda^{\prime} \in 
\Lambda$ such that ${\rm arg} \lambda = 
{\rm arg} \lambda^{\prime}$ we have 
$\gamma^{\lambda \cdot z}_i=
\gamma^{\lambda^{\prime} \cdot z}_i$. 
This implies that 
(if $\delta >0$ is sufficiently small) there exists 
a positive real numbers $C>0$ such that 
\begin{equation}
\int_{ \gamma^{\lambda \cdot z}_i 
\setminus \overline{S^{\lambda \cdot z}_i} }  
 \Bigl| \exp ( h_z(x)-h_z( \alpha (i) ) ) 
x_1^{c_1-1} \cdots x_n^{c_n-1} 
dx_1 \wedge \cdots \wedge dx_n \Bigr| <C
\end{equation}
for any $\lambda \in \Lambda$. Then we have 
\begin{eqnarray}
 \Bigl| \int_{ \gamma^{\lambda \cdot z}_i 
\setminus \overline{S^{\lambda \cdot z}_i} }  
\exp ( \lambda \cdot h_z(x) ) 
x_1^{c_1-1} \cdots x_n^{c_n-1} 
dx_1 \wedge \cdots \wedge dx_n \Bigr| 
 \ = \ | \exp ( \lambda \cdot h_z( \alpha (i)) ) | 
\\ 
\times  
 \Bigl| \int_{ \gamma^{\lambda \cdot z}_i 
\setminus \overline{S^{\lambda \cdot z}_i} }  
\exp (  \lambda \cdot 
\{ h_z(x)-h_z( \alpha (i)) \} ) 
x_1^{c_1-1} \cdots x_n^{c_n-1} 
dx_1 \wedge \cdots \wedge dx_n \Bigr| 
\\
\leq C | \exp ( \lambda \cdot 
h_z( \alpha (i)) ) | \times 
\sup_{ x \in \gamma^{\lambda \cdot z}_i 
\setminus \overline{S^{\lambda \cdot z}_i} } 
 \Bigl| \exp \{ \frac{\lambda -1}{\lambda} 
(h_{\lambda \cdot z}(x)- 
h_{\lambda \cdot z}( \alpha (i))) \} \Bigr| 
\\
\leq C | \exp ( \lambda \cdot 
h_z( \alpha (i)) ) | \times 
\exp (- \frac{\e^2}{2} | \lambda | )
\end{eqnarray}
for any $\lambda \in \Lambda$ satisfying 
$| \lambda | \gg 0$. Hence, to prove the theorem, 
it suffices to calculate the asymptotic expansion of 
the integral 
\begin{equation}
\tl{u_i}( \lambda \cdot z )=  
 \int_{S_i^{\lambda \cdot z}}  
\exp ( \lambda \sum_{j=1}^N z_j x^{a(j)}) 
x_1^{c_1-1} \cdots x_n^{c_n-1} 
dx_1 \wedge \cdots \wedge dx_n  
\end{equation}
as $| \lambda | \longrightarrow + \infty$ 
in the sector $\Lambda$. 
For the  Morse coordinate $y=(y_1, \ldots, y_n)$ of 
$h_z$ we can easily show 
\begin{equation}\label{COE} 
{\rm det} \left( 
\frac{\partial y_j}{\partial x_k} 
\right)_{x= \alpha (i)}=
\sqrt{\frac{H_i(z)}{2^n}}. 
\end{equation}
Also by using the coordinate 
$y=(y_1, \ldots, y_n)$ set 
\begin{equation}
f(y_1, \ldots, y_n):=x_1^{c_1-1} \cdots x_n^{c_n-1} 
\times {\rm det} \left( 
\frac{\partial x_j}{\partial y_k} \right)
\end{equation}
and let 
\begin{equation}
f(y_1, \ldots, y_n)= \sum_{a \in \ZZ_+^n} 
f_a y^a \quad (f_a \in \CC ) 
\end{equation}
be its Taylor expansion at $y=0$ 
i.e. $x= \alpha (i)$. Then by 
\eqref{COE} we obtain 
\begin{equation}\label{HES} 
f_0=f(0, \ldots, 0)= \alpha (i)_1^{c_1-1} \cdots  
\alpha (i)_n^{c_n-1} \times 
\sqrt{ \frac{2^n}{H_i(z)} }. 
\end{equation}
Now the restriction of the $n$-form 
\begin{equation}
\exp ( \lambda \cdot h_z(x)) 
x_1^{c_1-1} \cdots x_n^{c_n-1} 
dx_1 \wedge \cdots \wedge dx_n
\end{equation}
to the stable manifold 
$S_i^{\lambda \cdot z} 
=  \{ \eta^{\prime} \in \RR^n \ | \ 
| \eta_j^{\prime} | < \e \ (1 \leq j \leq n) \} 
\subset \RR^n$ 
has the following form: 
\begin{eqnarray}
( \sqrt{-1})^n e^{- \frac{\sqrt{-1} n \theta}{2}} 
\exp \left\{ 
\lambda \cdot h_z( \alpha (i))-
| \lambda | ( \eta_1^{\prime})^2 - \cdots - 
| \lambda | ( \eta_n^{\prime})^2 
\right\}  
\\ 
\times 
\left[
 \sum_{a \in \ZZ_+^n} f_a \cdot 
e^{- \frac{\sqrt{-1} |a| \theta}{2}} \cdot 
\{ \sqrt{-1} \eta^{\prime} \}^a 
\right] 
d \eta_1^{\prime} \wedge \cdots 
\wedge d \eta_n^{\prime}. 
\end{eqnarray}
For any $a=(a_1, \ldots, a_n) \in \ZZ_+^n$ 
we can show that the integral of the 
$n$-form 
\begin{eqnarray}
\omega_a := 
( \sqrt{-1})^n e^{- \frac{\sqrt{-1} n \theta}{2}} 
\exp \left\{ 
\lambda \cdot h_z( \alpha (i))-
| \lambda | ( \eta_1^{\prime})^2 - \cdots - 
| \lambda | ( \eta_n^{\prime})^2 
\right\}  
\\ 
\times 
f_a \cdot e^{- \frac{\sqrt{-1} 
|a| \theta}{2}} \cdot 
\{ \sqrt{-1} \eta^{\prime} \}^a 
d \eta_1^{\prime} \wedge \cdots 
\wedge d \eta_n^{\prime} 
\end{eqnarray}
over the whole $\RR^n_{\eta^{\prime}}$ is 
equal to 
\begin{eqnarray}
( \sqrt{-1})^n \lambda^{- \frac{n}{2}} 
\exp ( \lambda \cdot h_z( \alpha (i))) \times 
f_a \cdot \lambda^{- \frac{|a|}{2}} 
\\ 
\times 
\int_{\RR^n}
\exp (-t_1^2- \cdots -t_n^2) 
\{ \sqrt{-1}t \}^a 
d t_1 \wedge \cdots \wedge d t_n 
\end{eqnarray}
by setting $(t_1, \ldots, t_n)= 
(\sqrt{| \lambda |} \eta_1^{\prime}, \ldots, 
\sqrt{| \lambda |} \eta_n^{\prime})$. 
Note that the integral 
\begin{equation}
\int_{\RR^n}
\exp (-t_1^2- \cdots -t_n^2) 
\{ \sqrt{-1}t \}^a 
d t_1 \wedge \cdots \wedge d t_n 
\end{equation}
is zero if $a_i \in \ZZ_+$ is odd for some 
$1 \leq i \leq n$. As the previous part 
of this proof, we can show also that 
there exists $M>0$ such that 
\begin{equation}
 \Bigl| \int_{\RR^n \setminus S_i^{\lambda \cdot z}}
 \omega_a \Bigr| \leq M 
 | \exp ( \lambda \cdot 
h_z( \alpha (i)) ) | \times 
\exp (- \frac{\e^2}{2} | \lambda | )
\end{equation}
for any $\lambda \in \Lambda$ satisfying 
$| \lambda | \gg 0$. 
Then the result follows immediately from 
\eqref{HES}. This completes the proof. 
\end{proof} 

\begin{remark}
If $z \in \Omega_0$ and the critical 
point $\alpha (i)$ 
of ${\rm Re}(h_z):T^{\an} \longrightarrow \RR$ 
in $T^{\an}$ is given, by using the holomorphic 
Morse coordinate in the proof above 
we can calculate also the coefficients 
$\beta_1(z), \beta_2(z), \ldots \in \CC$ explicitly. 
\end{remark} 

For the point $z \in \Omega_0$ let 
\begin{equation}
\mathbb{L}_z 
= \{ \lambda \cdot z \in X= \CC^N \ | \ 
\lambda \in \CC \} \simeq \CC_{\lambda} 
\end{equation}
be the complex line in $X= \CC^N$ passing through 
$z \in \Omega_0$. Then by 
Theorems \ref{BRH} and \ref{ASE} we can 
observe Stokes' phenomena for the restrictions 
$u_i|_{\mathbb{L}_z}$ 
of the functions $u_i$ ($1 \leq i \leq 
\Vol_{\ZZ}( \Delta )$) to the line $\mathbb{L}_z 
\simeq \CC_{\lambda}$. Indeed, by Theorem \ref{ASE} 
the dominance ordering of the 
functions $u_i|_{\mathbb{L}_z}$ 
at infinity (i.e. where $| \lambda | \gg 0$) 
changes as ${\rm arg}( \lambda )$ increases. 
More precisely, the asymptotic expansions at infinity 
of the restrictions of the $A$-hypergeometric 
functions to $\mathbb{L}_z 
\simeq \CC_{\lambda}$ may jump at the Stokes lines: 
\begin{equation}
\left\{ \lambda \in \CC \ | \ 
{\rm Re} \left[ \lambda \cdot \{ h_z( \alpha (i)) 
- h_z( \alpha (j)) \} \right] =0 
\right\}  \qquad (i \not= j). 
\end{equation}
It would be an interesting problem to 
determine the Stokes multipliers in 
this case. 

\begin{remark}
When $\Delta^{\prime} = 
{\rm conv}(A)$ does not contain the origin 
$0 \in \RR^n$, the last half of the proof of 
Proposition \ref{NCP} does not work. Namely 
for $z \in \Omega_0$ 
the number of the non-degenerate (Morse) 
critical points of 
$h_z(x)$ may be smaller than 
$\Vol_{\ZZ}( \Delta^{\prime} ) < 
\Vol_{\ZZ}( \Delta )$. 
Nevertheless, as in Theorems \ref{BRH} 
and \ref{ASE} we can construct a part of 
a basis of $H_n^{\rd}(U_z ; \CK_z^* ) \simeq 
H_n(T^{\an} \cup Q, Q; \iota_* \LL )$ 
by the corresponding rapid decay $n$-cycles 
and obtain the asymptotic 
expansions at infinity of the 
confluent $A$-hypergeometric functions 
associated to them. Note that 
the number of the critical points of $h_z(x)$ 
in such a case is given by 
\cite[Lemma 2.10]{E-2}. 
\end{remark}

\section{The two-dimensional case}\label{sec:6}

In this section, we shall construct a natural 
basis of the rapid decay homology group 
$H_n (T^{\an} \cup Q, Q; \iota_* \LL )$ in 
the two-dimensional case i.e. $n=2$.

\subsection{Some results 
on relative twisted homology groups}

First, we prepare some elementary results 
on relative twisted homology groups. 
Set $Z= \CC^2_{x_1, x_2}$ and let $h_0$ 
be the meromorphic function on $Z^{\an}$ defined by 
\begin{equation}
h_0(x_1,x_2)=\frac{1}{x_1^{m_1} x_2^{m_2}} \qquad 
(m_1, m_2 \in \ZZ_{>0}). 
\end{equation}
Let $\pi_0 : \tl{Z_0} 
\longrightarrow Z^{\an}$ be the real oriented 
blow-up of $Z^{\an}$ along the 
normal crossing divisor $D_0^{\an}= 
\{ x_1=0 \} \cup \{ x_2=0 \}$ and set $\tl{D_0} = 
\pi_0^{-1}(D_0^{\an}) \subset \tilde{Z_0}$ and 
$U_0^{\an}=Z^{\an} \setminus D_0^{\an} 
\simeq (\CC^*)^2$. 
By the inclusion map 
$\iota_0 : U_0^{\an} \hookrightarrow \tl{Z_0}$ 
we consider $U_0^{\an}$ as an open 
subset of $\tl{Z_0}$ and set 
\begin{equation}
P_0= \tl{D_0} \cap 
\overline{ \{ x \in U_0^{\an}  \ | \ 
{\rm Re} h_0(x) \geq 0 \} } 
\end{equation}
and $Q_0=\tl{D_0} \setminus P_0$. 
Finally let $\LL_0$ be the local system 
of rank one on $U_0^{\an}$ defined by 
\begin{equation}
\LL_0=\CC_{U_0^{\an}}x_1^{\beta_1} x_2^{\beta_2} 
\qquad ( \beta =( \beta_1, \beta_2) \in \CC^2). 
\end{equation}
Then by homotopy and Lemma \ref{ECL} 
we obtain the following lemma. 

\begin{lemma}\label{2-dim-1} 
\par \noindent (i) For $0< \e \ll 1$ set 
\begin{equation}
U_0^{\an}(\e )=\{ x=(x_1,x_2) \in U_0^{\an} 
\ | \ \e < |x_1| < \frac{1}{\e} \} \subset 
U_0^{\an}. 
\end{equation}
Then for any $p \in \ZZ$ the natural morphism 
\begin{equation}
H_p(U_0^{\an}( \e ) \cup Q_0, 
Q_0; (\iota_0)_* \LL_0  ) \longrightarrow 
H_p(U_0^{\an} \cup Q_0, Q_0; 
(\iota_0)_* \LL_0  )
\end{equation}
is an isomorphism. 
\par \noindent (ii) Assume that 
$\beta =( \beta_1, \beta_2) \in \CC^2$ 
satisfies the condition $m_2 \beta_1- 
m_1 \beta_2 \notin \ZZ$. Then we have 
\begin{equation}
H_p(U_0^{\an} \cup Q_0, Q_0; 
(\iota_0)_* \LL_0 ) \simeq 0
\end{equation}
for any $p \in \ZZ$. 
\end{lemma}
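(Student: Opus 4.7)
The plan is to handle (i) and (ii) separately: (i) is a homotopy/excision statement about truncating the ambient space, while (ii) is a genuine vanishing theorem that exploits the monodromy hypothesis.

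For (i), I would start with the long exact sequence of the triple $(U_0^{\an} \cup Q_0, U_0^{\an}(\e) \cup Q_0, Q_0)$, reducing the claim to the vanishing
$H_p(U_0^{\an} \cup Q_0, U_0^{\an}(\e) \cup Q_0; (\iota_0)_* \LL_0) = 0$
for all $p \in \ZZ$. The complement $U_0^{\an} \setminus U_0^{\an}(\e)$ splits into the two disjoint pieces $\{|x_1| \geq 1/\e\}$ and $\{|x_1| \leq \e\}$, which I would analyze separately after excision. The outer piece $\{|x_1| \geq 1/\e\}$ does not meet $\tilde{D_0}$, so a purely radial retraction in the $|x_1|$-direction collapses it onto its inner boundary $\{|x_1| = 1/\e\}$, yielding vanishing relative homology there. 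For the inner piece $\{|x_1| \leq \e\}$ together with its limiting boundary $\tilde{D_1} = \pi_0^{-1}(\{x_1=0\})$, the analogous radial push $r_1 \to \e$ must be thickened so as to extend continuously across $r_1 = 0$ while landing in $U_0^{\an}(\e) \cup Q_0$; this is arranged using that $Q_0 \cap \tilde{D_1}$ is open in $\tilde{D_1}$ and already sits inside $U_0^{\an}(\e) \cup Q_0$.

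For (ii), after using (i) to replace $U_0^{\an}$ by $U_0^{\an}(\e)$, I would perform a monomial change of coordinates designed to make $h_0$ depend on a single variable. Setting $g = \gcd(m_1, m_2)$ and choosing $a, c \in \ZZ$ with $am_1 + cm_2 = g$, the invertible matrix $\bigl(\begin{smallmatrix} a & -m_2/g \\ c & m_1/g \end{smallmatrix}\bigr)$ induces a self-isomorphism $(x_1, x_2) = (s^a t^{-m_2/g}, s^c t^{m_1/g})$ of $(\CC^*)^2$ (after passing to a finite cover when $g>1$). Under this change, $h_0$ becomes $s^{-g}$, depending only on $s$, while $\LL_0$ becomes $\CC s^{\tilde\beta_1} t^{\tilde\beta_2}$ with $\tilde\beta_2 = (m_1\beta_2 - m_2\beta_1)/g$; the hypothesis $m_2\beta_1 - m_1\beta_2 \notin \ZZ$ forces $\tilde\beta_2 \notin \ZZ$. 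Recompactifying $(\CC^*)^2$ in the new coordinates (which remains a good compactification and so computes the same rapid decay homology via Proposition \ref{RTH}), the decay condition is confined to the $s$-factor, while the $t$-factor contributes only ordinary twisted homology. A Künneth-type splitting then decomposes the answer into a tensor product whose $t$-factor is $H_*(\CC^*_t; \CC t^{\tilde\beta_2})$, which vanishes in all degrees because $\CC t^{\tilde\beta_2}$ has non-trivial monodromy around $t = 0$.

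The main obstacles will be: first, making the inner radial retraction in (i) continuous across the blow-up boundary $\tilde{D_1}$ while mapping into $U_0^{\an}(\e) \cup Q_0$, which hinges on $Q_0 \cap \tilde{D_1}$ being large enough in $\tilde{D_1}$ to absorb nearby interior points; and second, justifying the Künneth decomposition in (ii) at the level of rapid decay chains after recompactification, by identifying $\{t = 0\}$ and $\{t = \infty\}$ as irrelevant components (on which the exponential of $s^{-g}$ is bounded) and checking that rapid decay in $t$ imposes no condition beyond ordinary twisted homology, so that the product structure genuinely factorizes the computation.
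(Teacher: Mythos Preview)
Your treatment of (i) is correct and is just a more explicit version of what the paper does (the paper simply says ``by homotopy'' and leaves it at that); your care about continuity of the inner retraction across $r_1=0$ is appropriate and can be handled as you indicate.

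For (ii) your route is genuinely different from the paper's. The paper retracts further to $S^1_{x_1}\times\CC^*_{x_2}$, identifies the fibre homology $H_1(\{e\}\times\CC^*\cup Q_0,Q_0)$ with $\CC^{m_2}$ via Lemma~\ref{ECL}, computes the monodromy $\Psi_0$ of this fibre as $x_1$ winds once around $S^1$, shows its eigenvalues are $m_2$-th roots of $\exp\bigl(2\pi\sqrt{-1}(m_2\beta_1-m_1\beta_2)\bigr)$, and then uses Mayer--Vietoris on the covering $S^1=G_+\cup G_-$ so that vanishing follows from the invertibility of $\id-\Psi_0$. Your monomial change $(x_1,x_2)\mapsto(s,t)$ with $h_0=s^{-g}$, followed by K\"unneth against a $t$-factor $H_*(\CC^*_t;\CC t^{\tilde\beta_2})=0$, is a pleasant and more conceptual alternative; note incidentally that $\det\bigl(\begin{smallmatrix}a&-m_2/g\\c&m_1/g\end{smallmatrix}\bigr)=1$, so the change is an honest automorphism of $(\CC^*)^2$ and no cover is needed.

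There is, however, a genuine gap at the recompactification step. You invoke Proposition~\ref{RTH} to pass between the $x$-- and the $(s,t)$--pictures, but Proposition~\ref{RTH} (and Hien's invariance statement underlying it) concern \emph{projective} good compactifications. Here $Z=\CC^2_x$ is not projective, and if one passes to $\PP^1_{x_1}\times\PP^1_{x_2}$ then $h_0=x_1^{-m_1}x_2^{-m_2}$ acquires points of indeterminacy at $(0,\infty)$ and $(\infty,0)$, so this is not a good compactification either. Thus the quantity $H_p(U_0^{\an}\cup Q_0,Q_0;(\iota_0)_*\LL_0)$ in the lemma is not \emph{a priori} a rapid decay homology group, and you cannot simply trade it for the K\"unneth-friendly pair coming from $\PP^1_s\times\PP^1_t$ (where, by contrast, $h_0=s^{-g}$ \emph{is} free of indeterminacy). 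To close this gap you would need an additional argument---for instance, resolve the indeterminacy on $\PP^1_{x_1}\times\PP^1_{x_2}$ and show, via the techniques around Lemma~\ref{EC}, that the exceptional contributions vanish so that $H_p(U_0^{\an}\cup Q_0,Q_0)$ really coincides with $H_p^{\rd}((\CC^*)^2;\E)$. The paper's monodromy argument sidesteps this issue entirely by working directly with the given pair.
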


\begin{proof} The assertion (i) can be easily shown 
by homotopy. We will prove (ii). Let $S^1$ be 
the unit circle $\{ x_1 \in \CC \ | \ |x_1|=1 \}$ 
in $\CC^1_{x_1}$. Then by (i) and homotopy we 
have an isomorphism 
\begin{equation}
H_p( (S^1 \times \CC^*) \cup Q_0, 
Q_0; (\iota_0)_* \LL_0  ) \simto 
H_p(U_0^{\an} \cup Q_0, Q_0; 
(\iota_0)_* \LL_0  )
\end{equation}
for any $p \in \ZZ$. Let us take the base point 
$e:=1 \in S^1$ of $S^1$. Then by Lemma \ref{ECL} 
we have 
\begin{equation}
H_p( ( \{ e \} \times \CC^*) \cup Q_0, 
Q_0; (\iota_0)_* \LL_0  ) \simeq 0
\end{equation}
for $p \not= 1$ and there exists a natural 
basis $[ \gamma_1], \ldots, [ \gamma_{m_2}]$ 
of $H_1( ( \{ e \} \times \CC^*) \cup Q_0, 
Q_0; (\iota_0)_* \LL_0  ) \simeq \CC^{m_2}$. Let 
\begin{equation}
\Psi_0: H_1( ( \{ e \} \times \CC^*) \cup Q_0, 
Q_0; (\iota_0)_* \LL_0  ) \simto 
H_1( ( \{ e \} \times \CC^*) \cup Q_0, 
Q_0; (\iota_0)_* \LL_0  )
\end{equation}
be the linear automorphism, i.e. the monodromy 
of $H_1( ( \{ e \} \times \CC^*) \cup Q_0, 
Q_0; (\iota_0)_* \LL_0  )$ induced by the 
(clockwise) rotation along the circle $S^1$. 
By the matrix representation of $\Psi_0$ 
with respect to the basis $[ \gamma_1], 
\ldots, [ \gamma_{m_2}]$ we see that the 
eigenvalues of $\Psi_0$ are 
contained in the set 
\begin{equation}
\left\{ t \in \CC \ | \ t^{m_2}= 
\exp [ 2 \pi \sqrt{-1} (m_2 \beta_1- 
m_1 \beta_2) ] \right\}.  
\end{equation}
In particular, our assumption $m_2 \beta_1- 
m_1 \beta_2 \notin \ZZ$ implies that 
$\id - \Psi_0$ is an automorphism of 
$H_1( ( \{ e \} \times \CC^*) \cup Q_0, 
Q_0; (\iota_0)_* \LL_0  )$. Now for 
$0 < \e <<1$ we define two arcs 
$G_{\pm} \subset S^1$ in $S^1$ by 
\begin{equation}
G_{\pm} = \{ x_1 \in S^1 \ | \ 
 \pm {\rm Re} x_1 > - \e 
 | {\rm Im} x_1| \} \subset S^1.  
\end{equation}
Then $S^1=G_+ \cup G_-$. By the Mayer-Vietoris 
exact sequence for relative twisted homology 
groups associated to the open covering 
$S^1 \times \CC^* 
=(G_+ \times \CC^*) \cup (G_- \times \CC^*)$ 
of $S^1 \times \CC^*$ we can calculate 
$H_p( (S^1 \times \CC^*) \cup Q_0, 
Q_0; (\iota_0)_* \LL_0  )$ ($p \in \ZZ$) from 
$H_p( (G_{\pm} \times \CC^*) \cup Q_0, 
Q_0; (\iota_0)_* \LL_0  ) \simeq 
H_p( ( \{ e \} \times \CC^*) \cup Q_0, 
Q_0; (\iota_0)_* \LL_0  )$ ($p \in \ZZ$). 
Then the assertion (ii) follows 
from the invertibility of $\id - \Psi_0$. 
\end{proof} 

Next consider the meromorphic function 
$h_1$ on $Z^{\an}=\CC^2$ defined by 
\begin{equation}
h_1(x_1,x_2)=\frac{1}{(x_1- \lambda_1)^{n_1}
 \cdots (x_1- \lambda_k)^{n_k} 
x_1^{m_1} x_2^{m_2}} \qquad 
(n_j, m_1, m_2 \in \ZZ_{>0}), 
\end{equation}
where $\lambda_1, \ldots, \lambda_k$ are 
distinct non-zero complex numbers. 
Let $\pi_1 : \tl{Z_1} 
\longrightarrow Z^{\an}$ be the real oriented 
blow-up of $Z^{\an}$ along $D_1^{\an}= 
\cup_{j=1}^k \{ x_1= \lambda_j \} \cup 
\{ x_1=0 \} \cup \{ x_2=0 \}$ and define 
$\tl{D_1} \subset \tl{Z_1}$, 
$\iota_1 : U_1^{\an} 
=Z^{\an} \setminus D_1^{\an} 
\hookrightarrow \tl{Z_1}$, 
$P_1 \subset \tl{D_1}$ and 
$Q_1=\tl{D_1} \setminus P_1$ as above. 
Moreover let $\LL_1$ be 
the local system of rank one on $U_1^{\an}$ 
defined by 
\begin{equation}
\LL_1=\CC_{U_1^{\an}}x_1^{\beta_1} x_2^{\beta_2} 
\prod_{j=1}^k (x_1- \lambda_j)^{\beta_j^{\prime}} 
\qquad ( \beta =( \beta_1, \beta_2) \in \CC^2, 
\ \beta^{\prime}=
(\beta_1^{\prime}, \ldots, \beta_k^{\prime}) 
\in \CC^k ). 
\end{equation}
Then by the proof of Lemma \ref{2-dim-1} (ii) 
and Mayer-Vietoris exact sequences for 
relative twisted homology groups we 
obtain the following proposition. 

\begin{proposition}\label{2-dim-2} 
\par \noindent (i) For $0< \e \ll $ set 
\begin{equation}
U_1^{\an}(\e )=\{ (x_1,x_2) \in U_1^{\an} 
\ | \ \e < |x_1| < \frac{1}{\e}, \quad 
 |x_1- \lambda_j| > \e \quad (1 \leq j \leq k) 
\} \subset 
U_1^{\an}. 
\end{equation}
Then for any $p \in \ZZ$ the natural morphism 
\begin{equation}
H_p(U_1^{\an}( \e ) \cup Q_1, 
Q_1; (\iota_1)_* \LL_1 ) \longrightarrow 
H_p(U_1^{\an} \cup Q_1, Q_1; 
(\iota_1)_* \LL_1 )
\end{equation}
is an isomorphism. 
\par \noindent (ii) Assume that $k \geq 1$ and 
$\beta =( \beta_1, \beta_2) \in \CC^2$, 
$\beta^{\prime}=
(\beta_1^{\prime}, \ldots, \beta_k^{\prime}) 
\in \CC^k$ 
satisfy the conditions $m_2 \beta_1- 
m_1 \beta_2 \notin \ZZ$ and 
$m_2 \beta_j^{\prime} - n_j \beta_2 
\notin \ZZ$ for any $1 \leq j \leq k$. Then 
we have 
\begin{equation}
\dim H_p (U_1^{\an} \cup Q_1, Q_1; 
(\iota_1)_* \LL_1 )
=\begin{cases}
k \times m_2 & (p=2), \\
\ 0 & (p \not= 2) 
\end{cases}
\end{equation}
and can explicitly construct a basis 
of the vector space $H_2(U_1^{\an} \cup Q_1, Q_1; 
(\iota_1)_* \LL_1 )$ over $\CC$. 
\end{proposition}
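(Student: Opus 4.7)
The plan is to prove (i) by a direct deformation retract in the $x_1$-plane, and to establish (ii) by viewing $U_1^{\an}(\e)$ as a trivial $\CC^*_{x_2}$-bundle over $B_\e:=\{\e<|x_1|<1/\e\}\setminus\bigcup_j\{|x_1-\lambda_j|\le\e\}$ and running a Leray-style computation whose fibre input is Lemma \ref{ECL}(ii) and whose vanishing input at each puncture is Lemma \ref{2-dim-1}(ii).

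For part (i), the complement $U_1^{\an}\setminus U_1^{\an}(\e)$ is the union of the ``outer'' region $\{|x_1|\ge 1/\e\}\times\CC^*_{x_2}$ and the ``inner'' regions $\{|x_1|\le\e\}\times\CC^*_{x_2}$ and $\{|x_1-\lambda_j|\le\e\}\times\CC^*_{x_2}$. A radial homotopy in the $x_1$-coordinate pushes each of these regions onto its inner boundary cylinder inside $U_1^{\an}(\e)$, lifts to the real oriented blow-up $\tl{Z_1}$, and sends rapid decay chains modulo $Q_1$ to rapid decay chains modulo $Q_1$ in $U_1^{\an}(\e)$; excision then yields the claimed isomorphism.

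For (ii), after using (i) to work in $U_1^{\an}(\e)=B_\e\times\CC^*_{x_2}$, the base $B_\e$ is a bounded open surface with $k+2$ boundary circles and Euler characteristic $-k$. Over each $x_1\in B_\e$, the restriction of $h_1$ to the fibre is the Laurent polynomial $c(x_1)/x_2^{m_2}$ with $c(x_1)\ne 0$, so Lemma \ref{ECL}(ii) identifies the fibrewise relative twisted homology of $(\CC^*_{x_2}\cup Q_{1,\mathrm{fib}},Q_{1,\mathrm{fib}})$ with $\CC^{m_2}$ concentrated in degree one. Because the $m_2$ rapid decay sectors for $x_2^{-m_2}$ rotate with $\arg c(x_1)$, these fibrewise homologies assemble into a rank-$m_2$ local system $\CF$ on $B_\e$, and a Leray spectral sequence for the pair $(U_1^{\an}(\e)\cup Q_1,Q_1)$ fibered by the projection $p_1$ degenerates at $E_2$, giving
\begin{equation}
H_p(U_1^{\an}(\e)\cup Q_1,Q_1;(\iota_1)_*\LL_1)\simeq H_{p-1}(B_\e;\CF\otimes\LL_B),
\end{equation}
where $\LL_B=\CC_{B_\e}\,x_1^{\beta_1}\prod_j(x_1-\lambda_j)^{\beta_j'}$.

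It remains to compute $H_*(B_\e;\CF\otimes\LL_B)$. Applying Lemma \ref{2-dim-1}(ii) to each punctured-disk $\times\,\CC^*_{x_2}$ local model at $x_1=\lambda_j$ (and the analogous model at $x_1=0$) shows, via the same Leray identification, that $\id$ is not an eigenvalue of the monodromy of $\CF\otimes\LL_B$ around any puncture; hence $H_0(B_\e;\CF\otimes\LL_B)=0$, while $H_2=0$ because $B_\e$ has non-empty boundary. The Euler characteristic computation then forces $\dim H_1=km_2$, which proves the dimension statement. An explicit basis is $\gamma^{(j)}_i:=\sigma_j\times\delta^{(j)}_i$ for $1\le j\le k$ and $1\le i\le m_2$, where $\sigma_j$ is a small positively oriented loop around $\lambda_j$ in $B_\e$ and $\delta^{(j)}_i$ are the $m_2$ rapid decay $1$-chains in the $x_2$-fibre over a base point of $\sigma_j$ provided by Lemma \ref{ECL}(ii); their linear independence is verified by projecting to $H_1(B_\e;\CF\otimes\LL_B)$ along the $k$ independent loops around $\lambda_1,\ldots,\lambda_k$. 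The main obstacle is a rigorous setup of the Leray spectral sequence for the \emph{relative} rapid decay pair, since $Q_{1,\mathrm{fib}}$ rotates with $x_1$; this is handled by a concrete Mayer--Vietoris on a cover of $B_\e$ by simply connected sectors over which the rotating rapid decay arcs can be trivialised, in direct analogy with the $G_\pm$ cover used in the proof of Lemma \ref{2-dim-1}(ii).
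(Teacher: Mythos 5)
Your part (i) and the dimension count in part (ii) are essentially correct and follow the same route as the paper: restrict to $U_1^{\an}(\e)$ by a radial homotopy, use the local model of Lemma \ref{2-dim-1} (ii) at each of the punctures $x_1=0,\lambda_1,\ldots,\lambda_k$ to kill the local contributions (this is exactly where the hypotheses $m_2\beta_1-m_1\beta_2\notin\ZZ$ and $m_2\beta_j^{\prime}-n_j\beta_2\notin\ZZ$ enter), and then glue by Mayer--Vietoris; your packaging of the fibrewise homologies as a rank $m_2$ local system on the $(k{+}1)$-punctured plane and the Euler characteristic count $(-k)\cdot(-m_2)=km_2$ concentrated in degree $2$ is a clean reformulation of what the paper does.

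However, the explicit basis you propose does not exist: the chains $\gamma^{(j)}_i=\sigma_j\times\delta^{(j)}_i$ are not relative cycles. Dragging the fibre chain $\delta^{(j)}_i$ once around the small loop $\sigma_j$ produces a $2$-chain whose boundary modulo $Q_1$ is $\delta^{(j)}_i-\Psi_j\delta^{(j)}_i$, where $\Psi_j$ is the monodromy of the fibrewise relative twisted homology around $x_1=\lambda_j$. Its eigenvalues are the $m_2$-th roots of $\exp[2\pi\sqrt{-1}(m_2\beta_j^{\prime}-n_j\beta_2)]$, so by the very hypothesis $m_2\beta_j^{\prime}-n_j\beta_2\notin\ZZ$ that you use elsewhere, $\id-\Psi_j$ is invertible and there is no nonzero invariant chain to close the tube up with. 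Equivalently, the relative homology of the piece lying over a small punctured disk around $\lambda_j$ vanishes in all degrees (this is Lemma \ref{2-dim-1} (ii) transported to $\lambda_j$), so any class supported over a single small loop is zero; for the same reason a small loop $\sigma_j$ carries no nonzero class of $H_1(B_\e;\F)$ for a local system $\F$ without local invariants, so your ``projection onto the $k$ loops'' independence argument also collapses. Genuine generators must be built from chains spanning between two punctures, which is precisely why the paper's construction in Proposition \ref{2-dim-new} drags the fibre chains along the figure-eight contours $F_j$ joining $\lambda_j$ and $\lambda_{j+1}$ (and needs $n_j=n_{j+1}$ to do so). Your proof of the dimension formula stands, but the basis construction needs to be replaced by a Pochhammer-type (double-loop) construction of this kind.
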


In the special but important case where $k \geq 2$ 
and $n_1=n_2= \cdots =n_k>0$, we can 
construct the basis of $H_2(U_1^{\an} \cup Q_1, Q_1; 
(\iota_1)_* \LL_1 )$ in Proposition \ref{2-dim-2} (ii) 
very elegantly as follows. By homotopy we may 
assume that $\lambda_j = 
\exp ( \frac{2 \pi j}{k} \sqrt{-1})$ 
($1 \leq j \leq k$) from the start. For 
$1 \leq j \leq k$ let $G_j \subset S^1= 
\{ x_1 \in \CC \ | \ |x_1|=1 \}$ be the arc 
in the unit circle $S^1$ between the two points 
$\lambda_j, \lambda_{j+1} \in S^1$, 
where we set $\lambda_{k+1}= \lambda_1$. For 
sufficiently small $\e >0$ let $F_j$ be the 
boundary of the set 
\begin{equation}
B( \lambda_j; \e ) \cup G_j \cup 
B( \lambda_{j+1} ; \e ) \subset \CC^1_{x_1} 
\end{equation}
and denote the central point 
of the arc $G_j$ by $e_j \in G_j$.

\begin{center}
\includegraphics[height=4.5cm]{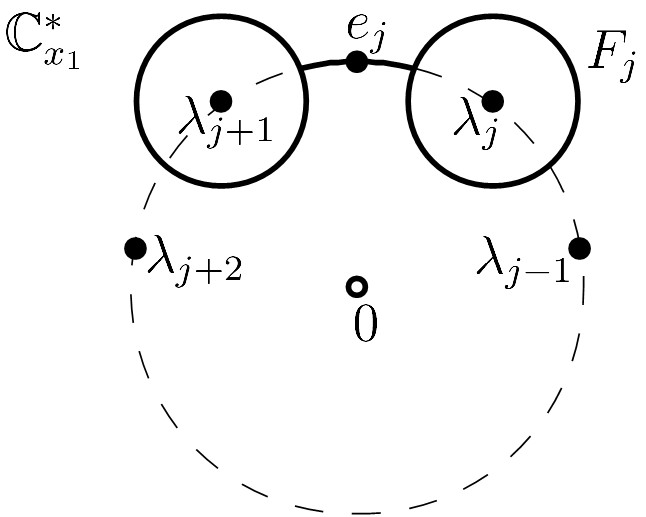}

Figure 4.
\end{center}


We regard $e_j \in F_j$ as the base point of 
the one-dimensional complex $F_j$. 
By Lemma \ref{ECL} we have 
\begin{equation}
H_p( ( \{ e_j \} \times \CC^*) \cup Q_1, 
Q_1; (\iota_1)_* \LL_1  ) \simeq 0
\end{equation}
for $p \not= 1$ and there exists a natural 
basis $[ \gamma_{j1}], \ldots, [ \gamma_{j m_2}]$ 
of $H_1( ( \{ e_j \} \times \CC^*) \cup Q_1, 
Q_1; (\iota_1)_* \LL_1  ) \simeq \CC^{m_2}$. 
Moreover by the proof of Lemma \ref{2-dim-1} (ii) 
and Mayer-Vietoris exact sequences we obtain 
\begin{equation}
H_p( (F_j \times \CC^*) \cup Q_1, 
Q_1; (\iota_1)_* \LL_1  ) \simeq 0
\end{equation}
for $p \not= 2$. Note that the shape of 
$F_j$ looks like that of the figure-$8$. 
We start from the base point  $e_j \in F_j$, 
go along the one-dimensional complex $F_j$ 
in the way of the usual drawing of the figure-$8$
and come back to the same place $e_j \in F_j$. 
Along this path on $F_j$ 
we drag the twisted $1$-cycles 
$\gamma_{j1}, \ldots, \gamma_{j m_2}$ 
over the point $e_j \in F_j$  
keeping their end points in 
the rapid decay direction $Q_1$ of 
$\exp (h_1)$. Then by our 
assumption $n_j=n_{j+1}$ we obtain 
the twisted $2$-cycles 
$[ \delta_{j1}], \ldots, [ \delta_{j m_2}]$ in 
$H_2( (F_j \times \CC^*) \cup Q_1, 
Q_1; (\iota_1)_* \LL_1  )$. 
It is easy to see that they form a basis of 
$H_2( (F_j \times \CC^*) \cup Q_1, 
Q_1; (\iota_1)_* \LL_1  ) \simeq \CC^{m_2}$. 
On the other hand, by Proposition \ref{2-dim-2} 
(i) and homotopy there exists an isomorphism 
\begin{equation}
H_p(  \{ ( \cup_{j=1}^k F_j) \times \CC^* \} \cup Q_1, 
Q_1; (\iota_1)_* \LL_1  ) \simto 
H_p( U_1^{\an} \cup Q_1, Q_1; (\iota_1)_* \LL_1  ) 
\end{equation}
for any $p \in \ZZ$. Moreover it follows from 
our assumption 
$m_2 \beta_j^{\prime} - n_j \beta_2 \notin \ZZ$ 
that we have 
\begin{equation}
H_p( \{ (F_j \cap F_{j-1}) \times \CC^* \} 
\cup Q_1, Q_1; (\iota_1)_* \LL_1  ) \simeq 0
\end{equation}
for any $1 \leq j \leq k$ and $p \in \ZZ$. 
Hence by the Mayer-Vietoris exact sequences 
associated to the covering 
$( \cup_{j=1}^k F_j) \times \CC^* = 
\cup_{j=1}^k (F_j \times \CC^*)$ of 
$( \cup_{j=1}^k F_j) \times \CC^*$ 
we obtain the following result. 

\begin{proposition}\label{2-dim-new} 
Assume that $k \geq 2$, 
$n_1=n_2= \cdots =n_k>0$ and 
$\beta =( \beta_1, \beta_2) \in \CC^2$, 
$\beta^{\prime}=
(\beta_1^{\prime}, \ldots, \beta_k^{\prime}) 
\in \CC^k$ satisfy the condition 
$m_2 \beta_j^{\prime} - n_1 \beta_2 
\notin \ZZ$ for $1 \leq j \leq k$. Then the elements 
$[ \delta_{j1}], \ldots, [ \delta_{j m_2}] 
\in H_2(U_1^{\an} \cup Q_1, Q_1; 
(\iota_1)_* \LL_1 )$ ($1 \leq j \leq k$) 
constructed above are linearly 
independent over $\CC$ and 
form a basis of $H_2(U_1^{\an} \cup Q_1, Q_1; 
(\iota_1)_* \LL_1 )$. 
\end{proposition}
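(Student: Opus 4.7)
The plan is to reduce the computation of $H_*(U_1^{\an} \cup Q_1, Q_1; (\iota_1)_* \LL_1)$ to that of a one-dimensional skeleton $(\cup_{j=1}^k F_j) \times \CC^*$ sitting inside $U_1^{\an}$, and then to apply a Mayer--Vietoris argument to the open covering $\{F_j \times \CC^*\}_{j=1}^k$ of this skeleton, exploiting the vanishing of the relative twisted homology at the pairwise intersections.

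More precisely, I would first invoke Proposition \ref{2-dim-2} (i) together with a deformation retraction of $U_1^{\an}(\e)$ onto an open neighborhood of $(\cup_{j=1}^k F_j) \times \CC^*$ (compatible with the rapid decay directions) to obtain, for every $p \in \ZZ$, a natural isomorphism
\begin{equation}
H_p \bigl( \{ (\cup_{j=1}^k F_j) \times \CC^* \} \cup Q_1, Q_1; (\iota_1)_* \LL_1 \bigr) \simto H_p \bigl( U_1^{\an} \cup Q_1, Q_1; (\iota_1)_* \LL_1 \bigr).
\end{equation}
Second, I would analyze each piece. On $F_j \times \CC^*$, the argument already given in the text (dragging the basis $\gamma_{j1}, \ldots, \gamma_{j m_2}$ of the fiber over $e_j$ along the figure-$8$ $F_j$, which works precisely because $n_j = n_{j+1}$) provides a basis $[\delta_{j1}], \ldots, [\delta_{j m_2}]$ of $H_2((F_j \times \CC^*) \cup Q_1, Q_1; (\iota_1)_* \LL_1) \simeq \CC^{m_2}$ and vanishing in all other degrees. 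On each intersection $(F_j \cap F_{j-1}) \times \CC^*$, a small neighborhood of $\lambda_j$ fibered over $\CC^*$, the argument of Lemma \ref{2-dim-1} (ii) combined with the assumption $m_2 \beta_j^{\prime} - n_1 \beta_2 \notin \ZZ$ forces
\begin{equation}
H_p \bigl( \{ (F_j \cap F_{j-1}) \times \CC^* \} \cup Q_1, Q_1; (\iota_1)_* \LL_1 \bigr) \simeq 0
\qquad (p \in \ZZ).
\end{equation}

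Third, feeding these into the Mayer--Vietoris long exact sequence for the covering $(\cup_{j=1}^k F_j) \times \CC^* = \cup_{j=1}^k (F_j \times \CC^*)$ (iterated, or written out as a spectral sequence of the nerve), the vanishing at the intersections collapses the sequence and yields
\begin{equation}
H_2 \bigl( U_1^{\an} \cup Q_1, Q_1; (\iota_1)_* \LL_1 \bigr) \simeq \bigoplus_{j=1}^k H_2 \bigl( (F_j \times \CC^*) \cup Q_1, Q_1; (\iota_1)_* \LL_1 \bigr),
\end{equation}
with vanishing in all other degrees. Under this identification, the $k \cdot m_2$ classes $[\delta_{ji}]$ are sent to the standard bases of the direct summands, so they are linearly independent and span, proving the proposition. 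The main technical obstacle I anticipate is justifying the dragging construction of $[\delta_{ji}]$ along the figure-$8$ $F_j$ as a genuine rapid decay relative twisted $2$-chain (rather than as a formal geometric picture), i.e.\ verifying that the end points in the $\CC^*$-direction remain in the rapid decay locus $Q_1$ uniformly along $F_j$; this is exactly the step where the hypothesis $n_j = n_{j+1}$ is used, since otherwise the monodromy of the twisted chain around $\lambda_j$ along $F_j$ would not match up and one could not close the $2$-chain on $F_j$.
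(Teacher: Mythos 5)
Your proposal follows essentially the same route as the paper: the text preceding the proposition is precisely this argument, namely the reduction of $H_p(U_1^{\an}\cup Q_1,Q_1;(\iota_1)_*\LL_1)$ to the skeleton $(\cup_{j=1}^k F_j)\times\CC^*$ via Proposition \ref{2-dim-2} (i) and homotopy, the identification of $H_2((F_j\times\CC^*)\cup Q_1,Q_1;(\iota_1)_*\LL_1)\simeq\CC^{m_2}$ with basis the dragged chains $\delta_{j1},\ldots,\delta_{jm_2}$ (using $n_j=n_{j+1}$), the vanishing at the overlaps $(F_j\cap F_{j-1})\times\CC^*$ forced by $m_2\beta_j^{\prime}-n_j\beta_2\notin\ZZ$ as in Lemma \ref{2-dim-1} (ii), and the Mayer--Vietoris sequence for the covering. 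Your proposal is correct and matches the paper's proof, including the observation that the only delicate point is closing the figure-$8$ drag inside the rapid decay locus $Q_1$.
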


\subsection{A construction of the basis in 
the two-dimensional case}

Now let us consider the situation 
in Sections \ref{sec:4} and \ref{sec:5} in the 
two-dimensional case. For $z \in \Omega$ we 
define $Q \subset \tilde{D} \subset \tilde{Z}$ 
in the real oriented blow-up $\pi : \tilde{Z} 
\longrightarrow Z^{\an}$ of $Z^{\an}=
( \tl{Z_{\Sigma}} )^{\an}$ as in 
the proof of Theorem \ref{Main}. 
For the local system 
$\LL =\CC_{T^{\an}} x_1^{c_1-1} 
x_2^{c_2-1}$ on $T^{\an}$ we shall construct 
a basis of the rapid decay homology 
group $H_2^{\rd}(T^{\an}):=H_2 
(T^{\an} \cup Q, Q; \iota_* \LL )$. By 
abuse of notations, for an open subset 
$W$ of $T^{\an}$ and $p \in \ZZ$ we set 
\begin{equation}
H_p^{\rd}(W):=H_p(W \cup Q, Q; \iota_* \LL )
\end{equation}
for short. Recall that $\Sigma$ is a smooth 
subdivision of the dual fan of $\Delta = 
{\rm conv} ( A \cup \{ 0 \} ) \subset \RR^2$ 
and $\rho_1, \ldots, \rho_{l} \in \Sigma$ are 
the rays i.e. the one-dimensional cones in 
$\Sigma$ which correspond to the relevant 
divisors $D_1, \ldots, D_{l}$ 
in $Z= \tl{Z_{\Sigma}}$. We renumber 
$\rho_1, \ldots, \rho_{l}$ 
in the clockwise order so that we have 
$D_i \cap D_{i+1} \not= \emptyset$ for 
any $1 \leq i \leq l-1$. 
By the primitive vector $\kappa_i \in \rho_i 
\cap (\ZZ^2 \setminus \{ 0 \} )$ on 
$\rho_i$ the order $m_i>0$ of the pole 
of $h_z(x)=\sum_{j=1}^N z_j x^{a(j)}$ 
along $D_i$ is explicitly given by 
\begin{equation}
m_i= - \min_{a \in \Delta} 
\langle \kappa_i, a \rangle . 
\end{equation}
For $1 \leq i \leq l$ we set 
\begin{equation}
\beta_i= \langle \kappa_i, (c_1-1, c_2-1) 
\rangle \in \CC. 
\end{equation}
Then at each point of 
$D_i \setminus (\cup_{j \not= i}D_j)$ 
there exists a local coordinate system 
$(y_1,y_2)$ of $Z_{\Sigma}$ such that 
$D_i= \{ y_1=0 \}$ and the function 
$x_1^{c_1-1} x_2^{c_2-1}$ has the 
form $y_1^{\beta_i}$. Namely 
the function $x_1^{c_1-1} x_2^{c_2-1}$ 
has the order $\beta_i \in \CC$ along $D_i$. 
By the non-degeneracy 
of $h_z$ the complex curve $\overline{h_z^{-1}(0)} 
\subset Z_{\Sigma}$ intersects each relevant 
divisor $D_i$ transversally. Set 
$v_i = \sharp 
(D_i \cap \overline{h_z^{-1}(0)})
\geq 0$ and $\{ q_{i1}, \ldots, q_{iv_i} \} 
=D_i \cap \overline{h_z^{-1}(0)}$. By our 
construction of the complex blow-up 
$Z=\tl{Z_{\Sigma}} \longrightarrow 
Z_{\Sigma}$ of $Z_{\Sigma}$, the fiber 
of the point $q_{ij}$ is a union 
$E_{ij}=E_{ij1} \cup \cdots \cup E_{ijm_i}$ 
of exceptional divisors $E_{ijk} \simeq 
{\mathbb{P}}^1$ ($1 \leq k \leq m_i$). 
Here $E_{ijk}$ is 
the exceptional divisor constructed by the 
$k$-th blow-up over $q_{ij}$. 
See Figure $5$ below. Then the order of the 
pole of (the meromorphic extension of) 
$h_z$ along $E_{ijk}$ is $m_i-k$. 
Moreover the order of 
the function $x_1^{c_1-1} x_2^{c_2-1}$ 
along $E_{ijk}$ is $\beta_i$ for any 
$1 \leq k \leq m_i$. 
Let $T_i \simeq \CC^* \subset D_i$ be 
the one-dimensional $T$-orbit in 
$Z_{\Sigma}$ which corresponds to $\rho_i$ 
and denote by the same letter $T_i$ its 
strict transform in the blow-up 
$Z=\tl{Z_{\Sigma}}$. 
Assume that $\beta_i \notin \ZZ$ ($1 \leq i \leq l$) 
and $m_{i+1} \beta_i- m_i \beta_{i+1} 
\notin \ZZ$ ($1 \leq i \leq l-1$). 
Then by Propositions 
\ref{2-dim-2} (ii) and \ref{2-dim-new} 
there exists a 
sufficiently small tubular neighborhood $W_i$ 
of $T_i^{\an}$ in $Z^{\an}$ such that for 
its open subset $W_i^{\circ}=W_i \cap 
T^{\an} \subset T^{\an}$ we have 
\begin{equation}
\dim H_p^{\rd} (W_i^{\circ})
=\begin{cases}
v_i \times m_i & (p=2), \\
\ 0 & (p \not= 2). 
\end{cases}
\end{equation}
Furthermore we can explicitly construct a basis 
$\delta_{ijk} \in H_2^{\rd} (W_i^{\circ})$ 
($1 \leq j \leq v_i$, $1 \leq k \leq m_i$) 
of the vector space 
$H_2^{\rd} (W_i^{\circ})
=H_2(W_i^{\circ} \cup Q, Q; \iota_* \LL )$ 
over $\CC$.

\begin{center}
\includegraphics[width=12cm]{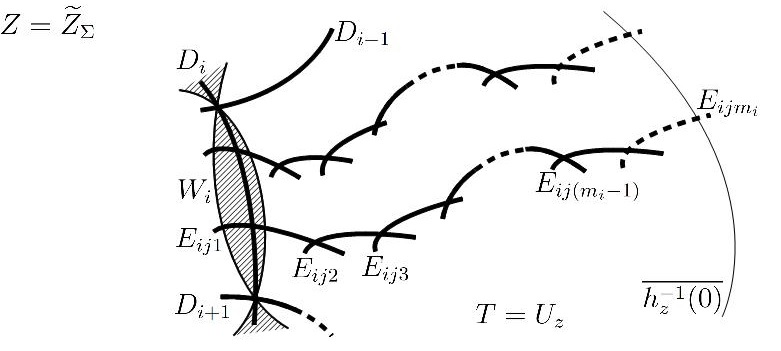}

Figure 5.
\end{center}


\begin{theorem}\label{2-dim-3} 
Assume that $c=(c_1, c_2) \in \CC^2$ is 
nonresonant, $\beta_i \notin \ZZ$ ($1 \leq i \leq l$) and 
$m_{i+1} \beta_i- m_i \beta_{i+1} 
\notin \ZZ$ ($1 \leq i \leq l-1$). 
Then the natural morphisms 
\begin{equation}
\Theta_i : H_2^{\rd} (W_i^{\circ}) \longrightarrow 
H_2^{\rd} (T^{\an}) \qquad (1 \leq i \leq l) 
\end{equation}
are injective and induce an isomorphism 
\begin{equation}
\Theta : \oplus_{i=1}^l H_2^{\rd} (W_i^{\circ}) \simto 
H_2^{\rd} (T^{\an}). 
\end{equation}
In particular, the cycles 
$\gamma_{ijk}:= \Theta_i ( \delta_{ijk}) 
\in H_2^{\rd}(T^{\an})$ 
($1 \leq i \leq l$, $1 \leq j \leq v_i$, 
$1 \leq k \leq m_i$) form a basis of 
the vector space 
 $H_2^{\rd}(T^{\an})=H_2 
(T^{\an} \cup Q, Q; \iota_* \LL )$ over $\CC$. 
\end{theorem}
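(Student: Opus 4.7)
The plan is to combine an Euler-characteristic dimension count with a Mayer--Vietoris argument that isolates the contribution of each relevant divisor $D_i$. By the Euler characteristic calculation in the proof of Theorem~\ref{Main} (through Lemmas~\ref{EC} and~\ref{ECL}), one has $\dim H_2^{\rd}(T^{\an}) = \Vol_{\ZZ}(\Delta) = \sum_{i=1}^{l} v_i m_i$, while the construction immediately preceding the theorem statement (based on Propositions~\ref{2-dim-2}(ii) and~\ref{2-dim-new}) gives $\dim H_2^{\rd}(W_i^{\circ}) = v_i m_i$ for each $i$. Hence the source and target of $\Theta$ already have the same dimension, so it is enough to prove that $\Theta$ is surjective; the injectivity of each individual $\Theta_i$ then follows automatically from the direct-sum structure of the source.

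For the surjectivity, I would shrink the $W_i$ so that their closures in $Z^{\an}$ are pairwise disjoint and avoid the corners $D_i\cap D_{i\pm 1}$, and set $W = \bigsqcup_{i=1}^{l} W_i^{\circ}$. Since each $W_i^{\circ}$ is topologically separated from the others, an excision argument yields $H_2(W\cup Q, Q;\iota_*\LL) = \bigoplus_{i} H_2^{\rd}(W_i^{\circ})$, and the first arrow in the long exact sequence of the triple $(T^{\an}\cup Q,\, W\cup Q,\, Q)$ is precisely $\Theta$. Surjectivity of $\Theta$ is therefore reduced to the vanishing
\[
H_2(T^{\an}\cup Q,\, W\cup Q;\,\iota_*\LL) \;=\; 0.
\]
After excising the cores of the $W_i$, this relative group is computed on a neighborhood of $T^{\an}\setminus W$ together with its rapid-decay boundary, which I would cover in $\tilde{Z}$ by three types of open pieces and analyze via a second Mayer--Vietoris: (a) small neighborhoods of the corners $D_i\cap D_{i+1}$, where Lemma~\ref{2-dim-1}(ii) applies and its hypothesis $m_{i+1}\beta_i - m_i\beta_{i+1}\notin\ZZ$ is exactly the assumed corner nonresonance; (b) neighborhoods of the chains $E_{ij1}\cup\cdots\cup E_{ijm_i}$ of $\PP^1$-bundles over each $q_{ij}\in D_i\cap\overline{h_z^{-1}(0)}$, controlled by Propositions~\ref{2-dim-2}(ii) and~\ref{2-dim-new} together with the transversality statement in Remark~\ref{TRAN}; and (c) a ``bulk'' piece $V_0\subset T^{\an}$, relatively compact in $T^{\an}$ and a deformation retract of it, for which $H_*^{\rd}(V_0)\simeq H_*(T^{\an};\LL)$ vanishes by the nonresonance of $c$ exactly as in the proof of Lemma~\ref{IRR}. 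After shrinking so that triple intersections are empty, the Mayer--Vietoris spectral sequence for this cover forces the desired vanishing.

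The main obstacle is piece (b). The last bundle $E_{ijm_i}$ of each chain is irrelevant (so $Q$ avoids it by definition), whereas the preceding components are relevant and do carry rapid-decay directions in $Q$, and one must match the local bases $\{\delta_{ijk}\}$ produced by Propositions~\ref{2-dim-2} and~\ref{2-dim-new} with the global Mayer--Vietoris decomposition to verify that they already absorb the full $v_i m_i$-dimensional local rapid-decay contribution of the chain, leaving nothing in the complement. The hypothesis $\beta_i\notin\ZZ$ enters precisely here, being used to kill the twisted homology of the relevant $S^1$ factor at the interface between the neighborhood of each chain and the bulk. This bookkeeping in the real oriented blow-up, essentially identifying the paper's local constructions with the global decomposition, is the heart of the argument.
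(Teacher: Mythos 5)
Your proposal is correct and follows essentially the same route as the paper: the paper's (very terse) proof likewise combines the dimension count $\sum_i v_i m_i = \Vol_{\ZZ}(\Delta) = \dim H_2^{\rd}(T^{\an})$ with Mayer--Vietoris patching, the vanishing of the ordinary twisted homologies $H_p(T^{\an};\LL)$ and $H_p(\wht{W_i^{\circ}};\LL)$ forced by nonresonance and $\beta_i\notin\ZZ$, and Lemma~\ref{2-dim-1} applied at the corners and along the exceptional chains. The only organizational difference is that the paper first enlarges each $W_i$ to a neighborhood $\wht{W_i}$ of $T_i^{\an}\cup\bigcup_j(E_{ij}^{\an}\setminus E_{ijm_i}^{\an})$, shown to induce an isomorphism on $H_p^{\rd}$ by repeated use of Lemma~\ref{2-dim-1}(i), before patching --- which is precisely the bookkeeping you single out as the heart of your step (b).
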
 

\begin{proof}
By the repeated use of 
Lemma \ref{2-dim-1} (i) and homotopy, 
we can find a small neighborhood $\wht{W_i}$ 
of $T_i^{\an} \cup \cup_{j=1}^{v_i} 
(E_{ij}^{\an} \setminus E_{ijm_i}^{\an})$ 
in $Z^{\an}$ containing $W_i$ such that for 
its open subset 
$\wht{W_i^{\circ}}=\wht{W_i} \cap 
T^{\an} \subset T^{\an}$ the natural morphism 
\begin{equation}
H_p^{\rd} (W_i^{\circ}) \longrightarrow 
H_p^{\rd} (\wht{W_i^{\circ}} )
\end{equation}
is an isomorphism for any $p \in \ZZ$. 
By our assumption 
$(c_1, c_2) \notin \ZZ^2$ we have 
the vanishing of the usual twisted 
homology group $H_p(T^{\an}; \LL )$ for 
any $p \in \ZZ$. Similarly we obtain 
the vanishings of 
$H_p( \wht{W_i^{\circ}} ; \LL )$ etc. 
Then the assertion can be proved by patching 
these results with the help 
of Lemma \ref{2-dim-1} (i) 
and the Mayer-Vietoris exact sequences for 
the relative twisted homology groups 
in the proof of Theorem \ref{Main}. 
\end{proof} 
Let $\Gamma_i \prec \Delta$ be 
the supporting face of 
$\rho_i$ in $\Delta$. Then 
the lattice length of $\Gamma_i$ is equal to 
$v_i \geq 0$ and we have the equality 
$\sum_{i=1}^{l} (v_i \times m_i)= 
\Vol_{\ZZ}(\Delta )$ as expected from 
the result of Theorem \ref{2-dim-3}.

\section{Higher-dimensional cases}\label{sec:7}

In this section, we shall extend the construction of 
the basis of the rapid decay homology group
$H_n (T^{\an} \cup Q, Q; \iota_* \LL )$ for $n=2$ 
in Section \ref{sec:6} to higher-dimensional cases.

\subsection{Some results on twisted Morse theory}

Let $T_0=(\CC^*)^k_x$ be a $k$-dimensional algebraic 
torus and $h_0(x)$ a Laurent polynomial 
on it whose Newton polytope $\Delta_0=NP(h_0) 
\subset \RR^k$ 
is $k$-dimensional. We assume that $h_0$ is 
non-degenerate in the sense of 
Kouchnirenko \cite{Kushnirenko}. Namely 
we impose the condition in Definition \ref{AND} 
for any face $\Gamma \prec \Delta_0$ of 
$\Delta_0$. 

\begin{proposition}\label{H-dim-1} 
In the situation as above, for generic 
$a=(a_1, \ldots, a_k) \in \CC^k$ the (possibly 
multi-valued) function $g(a,x)= h_0(x)x^{-a}$ 
on $T_0$ has exactly $\Vol_{\ZZ}(\Delta_0)$ 
critical points in $T_0$ and all of them 
are non-degenerate 
(i.e. of Morse type) and contained in 
$T_0 \setminus \{ h_0=0 \} = \{ x \in T_0 \ | \ 
g(a,x) \not= 0 \}$. 
\end{proposition}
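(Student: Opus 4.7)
The plan is to identify the critical points of $g(a,\cdot)$ with a fiber of the logarithmic Gauss map of $h_0$, count them by the Bernstein--Khovanskii--Kouchnirenko (BKK) theorem, and establish the Morse property via Sard's theorem. First I would differentiate: from
\[
dg(a,\cdot) = x^{-a}\Bigl(dh_0 - h_0 \sum_{i=1}^k a_i \frac{dx_i}{x_i}\Bigr),
\]
the critical points of $g(a,\cdot)$ in $T_0$ are exactly the common zeros of the Laurent polynomials
\[
f_i(x) := x_i \partial_i h_0(x) - a_i h_0(x) \qquad (i=1,\ldots,k).
\]
A common zero with $h_0(x)=0$ would force $\partial_i h_0(x)=0$ for all $i$, contradicting Kouchnirenko non-degeneracy of $h_0$ on the top face $\Delta_0$. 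Hence every critical point lies in $V := T_0 \setminus \{h_0 = 0\}$ and corresponds bijectively to a point of $\Phi^{-1}(a)$, where $\Phi: V \to \CC^k$ is the logarithmic Gauss map $\Phi_i(x) := x_i \partial_i h_0(x) / h_0(x)$.

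Next I would count $\#\Phi^{-1}(a)$ using BKK. For generic $a$, the Newton polytope of each $f_i$ equals $\Delta_0$ (it suffices that each $a_i$ avoids the finite integer set $\{\alpha_i : \alpha \in A\}$, where $A = \supp(h_0)$), so BKK bounds the number of common zeros of $f_1, \ldots, f_k$ in $T_0$ by the mixed volume $MV(\Delta_0,\ldots,\Delta_0) = \Vol_{\ZZ}(\Delta_0)$, with equality provided that for every proper face $\Gamma \prec \Delta_0$ the restricted system $f_i^\Gamma = x_i \partial_i h_0^\Gamma - a_i h_0^\Gamma = 0$ has no common zero in $T_0$. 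At such a putative zero, either $h_0^\Gamma(x)=0$, forcing $\grad h_0^\Gamma(x)=0$ and contradicting Kouchnirenko non-degeneracy of $h_0$ on $\Gamma$, or $h_0^\Gamma(x) \neq 0$, in which case writing $h_0 = \sum_{\alpha \in A} c_\alpha x^\alpha$ one computes
\[
a = \Phi^\Gamma(x) = \sum_{\alpha \in A \cap \Gamma} \frac{c_\alpha x^\alpha}{h_0^\Gamma(x)} \cdot \alpha,
\]
an affine combination of lattice points of $\Gamma$ whose complex weights sum to $1$; thus $a$ lies in the complex affine span of $\Gamma$, a proper affine subvariety of $\CC^k$. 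Generic $a$ avoids the finite union of such subvarieties over all proper faces, so BKK yields exactly $\Vol_{\ZZ}(\Delta_0)$ critical points.

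Finally, for the Morse property, in logarithmic coordinates $y_i = \log x_i$ one has $\log g(a,\cdot) = \log h_0 - \sum_i a_i y_i$, whose $y$-Hessian does not depend on $a$ and coincides with the Jacobian of $\Phi$ (since $\Phi_i = \partial(\log h_0)/\partial y_i$). So a critical point $x \in \Phi^{-1}(a)$ is of Morse type iff $\Phi$ is submersive at $x$. By generic smoothness (or algebraic Sard) applied to the dominant morphism $\Phi$, its critical values form a proper constructible subset of $\CC^k$, so for generic $a$ every point of $\Phi^{-1}(a)$ is a regular point of $\Phi$. Intersecting the three Zariski open dense conditions on $a$ (each $a_i$ avoiding $\{\alpha_i : \alpha \in A\}$; $a$ outside the affine spans of proper faces; $a$ a regular value of $\Phi$) gives the desired statement. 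The main obstacle I expect is verifying the BKK non-degeneracy on the proper faces; the observation that $\Phi^\Gamma$ takes values in the complex affine span of $\Gamma$ is the key geometric ingredient, and once it is in place the Morse conclusion follows essentially for free from Sard.
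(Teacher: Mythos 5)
Your proposal is correct and its skeleton coincides with the paper's proof: both reduce the critical points of $g(a,\cdot)$ to the common zeros of $f_i(a,x)=x_i\partial_i h_0(x)-a_ih_0(x)$, exclude $\{h_0=0\}$ by smoothness of that hypersurface, count the zeros by the Bernstein--Khovanskii--Kouchnirenko theorem, and obtain the Morse property from Bertini--Sard applied to the logarithmic Gauss map $\Phi$ (the paper computes the Hessian of $g$ directly in the $x$-coordinates rather than of $\log g$ in logarithmic coordinates, but this is the same computation). The one genuine divergence is the verification that the face systems $f_i^\Gamma(a,x)=0$ have no common zero in $T_0$ for a proper face $\Gamma\precneqq\Delta_0$ and generic $a$: the paper argues by contradiction with a dimension count on the incidence variety $\{(a,x):f_i^\Gamma(a,x)=0\}$, using its quasi-homogeneity in $x$ to produce a single $x$ with two distinct parameters $a\neq a'$ and hence $h_0^\Gamma(x)=\partial_1h_0^\Gamma(x)=\cdots=\partial_kh_0^\Gamma(x)=0$, contradicting non-degeneracy. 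You instead observe directly that such a zero with $h_0^\Gamma(x)\neq 0$ forces $a=\Phi^\Gamma(x)$ to be an affine combination (complex weights summing to $1$) of the lattice points of $A\cap\Gamma$, hence to lie in the complex affine span of $\Gamma$, a proper affine subspace of $\CC^k$; the case $h_0^\Gamma(x)=0$ is excluded by non-degeneracy on $\Gamma$. Your argument is more explicit: it identifies the bad parameter locus concretely as a finite union of affine subspaces (plus the regular-value and Newton-polytope conditions), whereas the paper's soft dimension count only shows the bad locus is not dense. Both are complete; no gap.
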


\begin{proof} 
We follow the argument in \cite[page 10]{Esterov}. 
For $1 \leq i \leq k$ set $\partial_i= 
\partial_{x_i}$. Then for $x \in T_0$ we have 
\begin{equation}\label{Eqiva} 
\partial_i g(a,x)=0 \quad \Longleftrightarrow \quad 
 x_i \partial_i h_0(x)-a_i h_0(x)=0. 
\end{equation}
Since the hypersurface $\{ h_0=0 \} \subset T_0$ 
is smooth, by \eqref{Eqiva} 
all the critical points of the function 
$g(a, *)$ are contained in $T_0 \setminus \{ h_0=0 \}$. 
Set $f_i(a, x)= x_i \partial_i h_0(x)-a_i h_0(x)$. 
Then by Bernstein's theorem we have 
\begin{equation}
\sharp 
 \{ x \in T_0 \ | \ f_i(a,x)=0 \quad (1 \leq i \leq k) \} 
= \Vol_{\ZZ} (\Delta_0) 
\end{equation}
if the Newton polytopes $NP(f_i(a,*))$ of 
the Laurent polynomials 
$f_i(a,*)$ ($1 \leq i \leq k$) are equal to 
$\Delta_0$ and the subvariety 
$K= \{ x \in T_0 \ | \ f_i(a,x)=0 
\quad (1 \leq i \leq k) \}$ 
of $T_0$ is a non-degenerate complete intersection 
(see \cite{Oka}). 
From now on, we will show that these two conditions are 
satisfied for generic $a \in \CC^k$. First of all, 
it is clear that 
$NP(f_i(a,*))= \Delta_0$ ($1 \leq i \leq k$) 
for generic $a \in \CC^k$. 
Note that $x \in T_0$ is in 
$K$ if and only if 
$x \in T_0 \setminus \{ h_0=0 \}$ and 
\begin{equation}
\frac{x_i \partial_i h_0(x)}{h_0(x)}=a_i \qquad 
(1 \leq i \leq k), 
\end{equation}
that is, $x \in T_0 \setminus \{ h_0=0 \}$ is 
sent to the point $a \in \CC^k$ by the map 
$T_0 \setminus \{ h_0=0 \} \longrightarrow \CC^k$ 
defined by 
\begin{equation}
x \longmapsto \left( 
\frac{x_1 \partial_1 h_0(x)}{h_0(x)}, 
\ldots \ldots , 
\frac{x_k \partial_k h_0(x)}{h_0(x)} 
\right) . 
\end{equation}
By the Bertini-Sard theorem generic $a \in \CC^k$ 
are regular values of this map. If $a \in \CC^k$ 
is such a point, we can 
easily show that 
${\rm det} \{ \partial_j f_i(a,x) \}_{j,i=1}^k  
\not= 0$ for any $x \in 
K \subset T_0 \setminus \{ h_0=0 \}$. 
Now let $\Gamma \precneqq \Delta_0$ 
be a proper face of $\Delta_0$. Then 
for generic $a \in \CC^k$ we have 
\begin{equation}
\{ x \in T_0 \ | \ f_i(a,x)^{\Gamma}=0 \quad 
(1 \leq i \leq k) \} = \emptyset. 
\end{equation}
Indeed, let us assume the converse. Then the first 
projection from the variety 
\begin{equation}
\{ (a,x) \in \CC^k \times T_0 \ | \ 
 f_i(a,x)^{\Gamma}=0 \quad (1 \leq i \leq k) \} 
\subset \CC^k \times T_0 
\end{equation}
to $\CC^k$ is dominant. Moreover by $\Gamma \not= 
\Delta_0$ this variety is quasi-homogeneous with 
respect to the second variables $x=(x_1, \ldots, x_k)$. 
In particular, its dimension is greater than $k$. 
Then by considering the second 
projection from it to $T_0$, 
we find that there exist 
 $x \in T_0$ and $a \not= a^{\prime} \in 
\CC^k$ such that $f_i(a,x)^{\Gamma}= 
f_i(a^{\prime}, x)^{\Gamma}=0$ for any 
$1 \leq i \leq k$. Namely we have 
\begin{equation}\label{E1} 
 x_i \partial_i h_0^{\Gamma}(x)-a_i h_0^{\Gamma}(x) 
=0 \qquad (1 \leq i \leq k)
\end{equation}
and 
\begin{equation}\label{E2} 
 x_i \partial_i h_0^{\Gamma}(x)
-a_i^{\prime} h_0^{\Gamma}(x) 
=0 \qquad (1 \leq i \leq k). 
\end{equation}
Comparing \eqref{E1} with \eqref{E2} for 
$1 \leq i \leq k$ such that $a_i \not= 
a_i^{\prime}$, we obtain $h_0^{\Gamma}(x)=
 \partial_1 h_0^{\Gamma}(x)= \cdots 
= \partial_k h_0^{\Gamma}(x)=0$ for 
the point $x \in T_0$. This contradicts our 
assumption that the Laurent polynomial 
$h_0$ is non-degenerate. We thus proved that 
the subvariety $K= \{ x \in T_0 \ | \ f_i(a,x)=0 
\quad (1 \leq i \leq k) \}$ 
of $T_0$ is a non-degenerate complete 
intersection and its cardinality is 
$\Vol_{\ZZ}(\Delta_0)$ for generic $a \in \CC^k$. 
Let us fix such a point $a \in \CC^k$. 
Recall that $K= 
\{ x \in T_0 \ | \ f_i(a,x)=0 \quad 
(1 \leq i \leq k) \}$ is the set of the critical 
points of the function $g(a,*)$ in 
$T_0 \setminus \{ h_0=0 \}$. At such a critical 
point $x \in T_0 \setminus \{ h_0=0 \}$ we have 
\begin{equation}
\frac{\partial^2 g}{\partial x_j \partial x_i} 
(a,x)= \partial_j \left\{ f_i(a,x) \frac{x^{-a}}{x_i} 
\right\} 
= \partial_j f_i(a,x) \cdot 
 \frac{x^{-a}}{x_i}. 
\end{equation}
Since ${\rm det} \{ \partial_j f_i(a,x) 
\}_{j,i=1}^k \not= 0$, 
we obtain  
\begin{equation}
{\rm det} \left\{ \frac{\partial^2 g}{\partial x_j 
\partial x_i} 
(a,x) \right\}_{j,i=1}^k \not= 0. 
\end{equation}
Namely all the critical points of the function $g(a,*)$ 
are non-degenerate. 
\end{proof}

Let $h_0$ be as above and $\LL_0$ a local system of rank 
one on $T_0^{\an}$ defined by 
\begin{equation}
\LL_0=\CC_{T_0^{\an}}x_1^{\beta_1} \cdots x_k^{\beta_k} 
\qquad ( \beta =( \beta_1, \ldots, \beta_k) \in \CC^k). 
\end{equation}
From now on, we will calculate the twisted homology 
groups $H_p(T_0^{\an} \setminus \{ h_0=0 \}^{\an} ; 
\LL_0)$ $(p \in \ZZ)$ by using our twisted Morse 
theory. Taking a sufficiently generic 
$a \in \Int (\Delta_0) \subset \RR^k$ 
we set $h_1(x)=h_0(x)x^{-a}$. Then by Proposition 
\ref{H-dim-1} the real-valued function 
$f:= |h_1|^{-2}: 
T_0^{\an} \setminus  \{ h_0=0 \}^{\an} 
\longrightarrow \RR$ has only $\Vol_{\ZZ}(\Delta_0)$ 
non-degenerate (Morse) critical points in 
$M:= T_0^{\an} \setminus  \{ h_0=0 \}^{\an}$. Moreover 
we can easily verify that the index of such a 
critical point is always $k$. 
For $t \in \RR_{>0}$ we define an open subset 
$M_t \subset M$ of $M$ by 
\begin{equation}
M_t = \{ x \in M \ | \ f(x)=|h_1|^{-2}(x) <t \}. 
\end{equation}
Then we have the following result. 

\begin{proposition}\label{VAN}  
For generic 
$\beta =( \beta_1, \ldots, \beta_k) \in \CC^k$ 
and $0< \e \ll 1$ we have 
\begin{equation}
H_p (M_{\e}; \LL_0) \simeq 0
\end{equation}
for any $p \in \ZZ$. 
\end{proposition}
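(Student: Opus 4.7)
The strategy is to identify $M_\e$ for small $\e$ with a collar neighborhood of the toric boundary at infinity of $T_0\setminus\{h_0=0\}^{\an}$, and then to prove that this collar has vanishing twisted homology with coefficients in $\LL_0$ by exploiting the non-trivial local monodromies of $\LL_0$ around each toric divisor, for $\beta$ in the complement of a countable union of affine hyperplanes in $\CC^k$.

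First I would take a smooth toric compactification $Z_\Sigma$ of $T_0=(\CC^*)^k$ coming from a smooth subdivision $\Sigma$ of the normal fan of $\Delta_0$ and, as in the proof of Theorem \ref{Main}, blow it up further to obtain a smooth variety $Z$ on which the meromorphic function $h_1(x)=h_0(x)x^{-a}$ has no point of indeterminacy. The hypothesis $a\in\Int(\Delta_0)$ yields $\langle\kappa_\rho,a\rangle-\min_{b\in\Delta_0}\langle\kappa_\rho,b\rangle>0$ for every ray $\rho\in\Sigma$, so $h_1$ has a pole of strictly positive order along every toric divisor $D_\rho$, whereas it vanishes along the proper transform of $\{h_0=0\}$. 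The Morse function $f=|h_1|^{-2}$ on $M$ has only the $\Vol_{\ZZ}(\Delta_0)$ non-degenerate critical points produced by Proposition \ref{H-dim-1}; for $\e$ smaller than the minimum of their critical values, the inclusion $M_{\e'}\hookrightarrow M_\e$ is a homotopy equivalence for all $0<\e'<\e$, so, letting $\e'\to 0$, $M_\e$ is homotopy equivalent to an arbitrarily thin collar of $\pi^{-1}(D^{\an})$ in $\tilde Z$ with the proper transform of $\{h_0=0\}^{\an}$ removed, where $\pi:\tilde Z\longrightarrow Z^{\an}$ is the real oriented blow-up along $D=Z\setminus T_0$. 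By the non-degeneracy of $h_0$, the proper transform of $\{h_0=0\}$ meets each stratum of $D$ transversally, so this collar carries a clean stratified structure.

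Next I would cover this collar by open neighborhoods $W_\sigma$ of the toric and exceptional orbits $O_\sigma$, indexed by the non-zero cones $\sigma\in\Sigma$ together with the cones arising from the blow-up. In local coordinates $(y_1,\ldots,y_r,u_1,\ldots,u_{k-r})$ adapted to $\sigma=\rho_1+\cdots+\rho_r$, the piece $M_\e\cap W_\sigma$ is a product of $r$ small punctured disks with an open subset of $O_\sigma$, and $\LL_0$ restricted to it has monodromy $\exp(2\pi\sqrt{-1}\langle\kappa_{\rho_j},\beta\rangle)$ around the $j$-th normal circle. If $\beta\in\CC^k$ avoids the countable union of affine-hyperplane conditions $\langle\kappa,\beta\rangle\in\ZZ$ for all primitive integer normals $\kappa$ appearing in the compactification, together with the analogues of the non-resonance conditions of Lemma \ref{2-dim-1}(ii) and Propositions \ref{2-dim-2}(ii) and \ref{2-dim-new}, then all these monodromies are simultaneously non-trivial. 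A K\"unneth-type argument as in the proof of Lemma \ref{2-dim-1}(ii) then gives vanishing of $H_\ast(M_\e\cap W_\sigma;\LL_0)$ and of the twisted homology of every multiple intersection $M_\e\cap W_{\sigma_1}\cap\cdots\cap W_{\sigma_m}$. The Mayer--Vietoris (equivalently \v{C}ech-to-singular) spectral sequence for the covering therefore collapses to $H_p(M_\e;\LL_0)\simeq 0$ for every $p\in\ZZ$.

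The hardest part will be the combinatorial bookkeeping in this Mayer--Vietoris argument: one has to enumerate systematically the non-resonance conditions stemming from all toric strata \emph{and} from the exceptional divisors produced in the resolution of $h_1$, and to verify the local vanishing at every multiple intersection. Because these constitute at worst countably many proper affine-linear conditions on $\beta\in\CC^k$, their common complement is still dense in $\CC^k$, which delivers the required genericity statement.
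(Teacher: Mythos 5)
Your proposal is correct and follows essentially the same route as the paper: identify $M_{\e}$ for small $\e$ with a deleted tubular neighborhood of $D'\setminus\overline{h_0^{-1}(0)}$ in a smooth toric compactification (using $a\in\Int(\Delta_0)$ so that $h_1$ has a pole along every toric divisor), and then conclude the vanishing from the non-triviality of the local monodromies of $\LL_0$ around each boundary component via Mayer--Vietoris. The only difference is that you invoke the complex blow-up resolving $h_1$ and the real oriented blow-up, which are not needed here since the statement concerns ordinary (not rapid decay) twisted homology; the paper works directly on $Z_{\Sigma'}$.
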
 

\begin{proof} 
Let $\Sigma^{\prime}$ be a smooth subdivision of 
the dual fan of $\Delta_0$ and $Z_{\Sigma^{\prime}}$ 
the (smooth) toric variety associated to it. Then 
the divisor at infinity $D^{\prime}= 
Z_{\Sigma^{\prime}} \setminus T_0$ is 
normal crossing. By the non-degeneracy of 
$h_0$ the divisor $\overline{h_0^{-1}(0)} \cup 
D^{\prime}$ is also normal crossing 
in a neighborhood of $D^{\prime}$. 
Moreover by the 
condition $a \in \Int (\Delta_0)$ 
the neighborhood 
$M_{\e} \sqcup 
(D^{\prime} \setminus \overline{h_0^{-1}(0)})$ of 
$D^{\prime} \setminus \overline{h_0^{-1}(0)}$ 
retracts to $D^{\prime} \setminus 
\overline{h_0^{-1}(0)}$ 
as $\e \longrightarrow +0$. Since for 
generic $\beta \in \CC^k$ the local system 
$\LL_0$ has a non-trivial monodromy around 
each irreducible component of $D^{\prime}$, 
the assertion follows. 
\end{proof} 
By this proposition we can apply the argument in 
the proof of Theorem \ref{BRH} to the Morse 
function $f= |h_1|^{-2}: M=
 T_0^{\an} \setminus  \{ h_0=0 \}^{\an} 
\longrightarrow \RR$ and obtain the following 
theorem. 

\begin{theorem}\label{TOD} 
For generic 
$\beta =( \beta_1, \ldots, \beta_k) \in \CC^k$ 
we have 
\begin{equation}
\dim 
 H_p (T_0^{\an} \setminus  \{ h_0=0 \}^{\an} ; \LL_0 )
=\begin{cases}
 \Vol_{\ZZ}(\Delta_0)  & (p=k), \\
\ 0 & (p \not= k) 
\end{cases}
\end{equation}
and there exists a basis of 
$H_k (T_0^{\an} \setminus  \{ h_0=0 \}^{\an} ; \LL_0 )$ 
indexed by the $\Vol_{\ZZ}(\Delta_0)$ 
non-degenerate (Morse) critical points 
of the (possibly multi-valued) 
function $h_1(x)=h_0(x)x^{-a}$ 
in $T_0 \setminus  \{ h_0=0 \}$. 
\end{theorem}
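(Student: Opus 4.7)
The plan is to apply the twisted Morse theory argument from the proof of Theorem \ref{BRH} to the real-valued function $f := |h_1|^{-2} : M \longrightarrow \RR_{>0}$, where $M := T_0^{\an} \setminus \{h_0=0\}^{\an}$, and where $\beta \in \CC^k$ is taken generic enough that Proposition \ref{VAN} applies.

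First I would check that $f$ is Morse on $M$ with exactly $\Vol_{\ZZ}(\Delta_0)$ critical points, all of index $k$. Since $h_1$ does not vanish on $M$, the equation $d|h_1|^2 = \bar h_1\, dh_1 + h_1\, d\bar h_1 = 0$ forces $dh_1 = 0$ by type considerations, so the critical points of $f$ coincide with those of $h_1$; by Proposition \ref{H-dim-1} there are exactly $\Vol_{\ZZ}(\Delta_0)$ of them and all are non-degenerate. At such a point $p$, choose a holomorphic Morse coordinate $y = (y_1,\ldots,y_k)$ with $h_1 = c + y_1^2 + \cdots + y_k^2$ where $c = h_1(p) \neq 0$, and write $y_j = \xi_j + \sqrt{-1}\eta_j$. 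Taylor expanding $|h_1|^2 = |c|^2 + 2\,\mathrm{Re}(\bar c\,(y_1^2+\cdots+y_k^2)) + O(|y|^4)$ exhibits the Hessian as a direct sum of $k$ two-by-two blocks of determinant $-|c|^2 < 0$, hence of signature $(1,1)$; thus $|h_1|^2$ and (after the sign change that does not affect the signature) $f$ both have Morse index $k$ at $p$.

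Next I would organize the sublevel filtration $M_t = \{x \in M \ |\ f(x) < t\}$. For small $\e > 0$, $M_\e$ is a neighborhood of the locus where $|h_1| \to \infty$; because $a \in \Int(\Delta_0)$, the meromorphic extension of $h_1 = h_0 x^{-a}$ to a smooth toric compactification $Z_{\Sigma'}$ of $T_0$ has a pole along every component of $D' = Z_{\Sigma'} \setminus T_0$, so $M_\e$ deformation retracts onto a neighborhood of $D'^{\an} \setminus \overline{h_0^{-1}(0)}^{\an}$, and Proposition \ref{VAN} provides the vanishing $H_p(M_\e; \LL_0) = 0$ for all $p$ and generic $\beta$. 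Since $f \to \infty$ only toward the removed hypersurface $\{h_0 = 0\}$, the $M_t$ exhaust $M$ as $t \to \infty$. Running the Morse-theoretic filtration exactly as in the proof of Theorem \ref{BRH} — using the gradient flow of $f$ to identify $M_{t'} \simeq M_t$ between consecutive critical values, and contributing one twisted $k$-cell (the descending disk $S_p$, with $\overline{S_p}$ a $k$-disk) at each critical point — shows $H_p(M, M_\e; \LL_0) = 0$ for $p \neq k$ and produces a basis of $H_k(M, M_\e; \LL_0)$ indexed by the critical points of $h_1$. The long exact sequence of the pair $(M, M_\e)$, together with the vanishing on $M_\e$, then yields the theorem.

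The main obstacle will be making the Morse deformation rigorous despite the non-compactness of $M$: one must verify that the gradient flow of $f$ is complete on the open slabs between consecutive critical levels and does not escape either toward $\{h_0=0\}$ (where $f \to +\infty$) or toward $D'$ (where $f \to 0$). Locally near each boundary stratum this reduces to a normal-form check using the toric coordinates of $Z_{\Sigma'}$, and globally the local deformations are patched by a partition of unity, in the same spirit as the smooth-chain deformation argument used in the proof of Proposition \ref{RTH}.
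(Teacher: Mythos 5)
Your proposal is correct and follows essentially the same route as the paper: the authors likewise define $f=|h_1|^{-2}$ on $M=T_0^{\an}\setminus\{h_0=0\}^{\an}$, invoke Proposition \ref{H-dim-1} for the $\Vol_{\ZZ}(\Delta_0)$ Morse critical points of index $k$, use Proposition \ref{VAN} for the vanishing on $M_\e$, and then run the sublevel-set filtration argument of Theorem \ref{BRH}. Your explicit Hessian computation showing each critical point has index $k$ fills in a step the paper only asserts.
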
 

Note that Theorem \ref{TOD} partially solves 
the famous problem in the paper Gelfand-Kapranov-Zelevinsky 
\cite{G-K-Z-2} of constructing a basis of 
the twisted homology group in their integral 
representation of $A$-hypergeometric 
functions. Indeed Theorem \ref{TOD} holds even if 
we replace the local system $\LL_0$ with the one 
\begin{equation}
\LL_1=
\CC_{T_0^{\an} \setminus \{ h_0=0 \}^{\an}}  
h_0(x)^{\alpha} x_1^{\beta_1} \cdots x_k^{\beta_k} 
\qquad ( 
\alpha \in \CC, \ \beta =( 
\beta_1, \ldots, \beta_k) \in \CC^k) 
\end{equation}
on $T_0^{\an} \setminus \{ h_0=0 \}^{\an}$.

\subsection{A construction of the basis in 
the higher-dimensional case}

Now we consider the situation 
in Sections \ref{sec:4} and \ref{sec:5}.  
We inherit the notations there. 
We fix a point $z \in \Omega$ and 
define $Q \subset \tilde{D} \subset \tilde{Z}$ etc. 
in the real oriented blow-up $\pi : \tilde{Z} 
\longrightarrow Z^{\an}$ of $Z^{\an}=
( \tl{Z_{\Sigma}} )^{\an}$ as in the proof 
of Theorem \ref{Main}. For the local system 
$\LL =\CC_{T^{\an}} x_1^{c_1-1} \cdots 
x_n^{c_n-1}$ on $T^{\an}$ we shall construct 
a basis of the rapid decay homology 
group $H_n^{\rd}(T^{\an}):=H_n 
(T^{\an} \cup Q, Q; \iota_* \LL )$. As 
in Section \ref{sec:6}, for an open subset 
$W$ of $T^{\an}$ and $p \in \ZZ$ we set 
\begin{equation}
H_p^{\rd}(W):=H_p(W \cup Q, Q; \iota_* \LL )
\end{equation}
for short. Recall that $\rho_1, \ldots, \rho_{l}$ 
are the rays in the smooth 
fan $\Sigma$ which correspond to the relevant 
divisors $D_1, \ldots, D_{l}$ 
in $Z= \tl{Z_{\Sigma}}$. 
By the primitive vector $\kappa_i \in \rho_i 
\cap (\ZZ^n \setminus \{ 0 \} )$ on 
$\rho_i$ the order $m_i>0$ of the pole 
of $h_z(x)=\sum_{j=1}^N z_j x^{a(j)}$ 
along $D_i$ is explicitly 
described by 
\begin{equation}
m_i= - \min_{a \in \Delta} 
\langle \kappa_i, a \rangle . 
\end{equation} 
By the non-degeneracy 
of $h_z$ the complex hypersurface $\overline{h_z^{-1}(0)} 
\subset Z_{\Sigma}$ intersects each relevant 
divisor $D_i$ transversally. Let 
$T_i \simeq (\CC^*)^{n-1} \subset D_i$ be 
the $T$-orbit in 
$Z_{\Sigma}$ which corresponds to $\rho_i$ 
and denote by the same letter $T_i$ its 
strict transform in the blow-up 
$Z=\tl{Z_{\Sigma}}$. 
Recall also that for $1 \leq i \leq l$ 
the Euler characteristic of 
the hypersurface $\{ h_z^{\Gamma_i}=0 \} 
=T_i \cap \overline{h_z^{-1}(0)} $ 
in $T_i$ is equal to 
$(-1)^n v_i=(-1)^n \Vol_{\ZZ}( \Gamma_i)$. Let 
$y=(y_1, \ldots, y_n)$ be the coordinates 
on an affine chart 
$U_i \simeq \CC^n \subset Z_{\Sigma}$ 
of $Z_{\Sigma}$ containing 
$T_i$ such that $T_i= \{ y_n=0 \}$ and 
$\LL \simeq \CC_{T^{\an}} y_1^{\beta_1} 
\cdots y_{n-1}^{\beta_{n-1}}y_n^{\beta_n}$. 
Define a local system $\LL_i$ on $T_i^{\an}$ by 
$\LL_i=\CC_{T_i^{\an}} y_1^{\beta_1} 
\cdots y_{n-1}^{\beta_{n-1}}$. 

\begin{proposition}\label{TODI} 
If $(\beta_1, \ldots, \beta_{n-1}) \in \CC^{n-1}$ 
is generic, we have 
\begin{equation}
\dim 
 H_p (T_i^{\an} \setminus  
\{ h_z^{\Gamma_i}=0 \}^{\an} ; \LL_i )
=\begin{cases}
v_i  & (p=n-1), \\
\ 0 & (p \not= n-1) 
\end{cases}
\end{equation}
and can construct a basis of 
$H_{n-1} (T_i^{\an} \setminus  
\{ h_z^{\Gamma_i}=0 \}^{\an} ; \LL_i )$ 
by the twisted Morse theory. 
\end{proposition}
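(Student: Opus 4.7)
The plan is to reduce Proposition \ref{TODI} directly to Theorem \ref{TOD} applied to the torus $T_0 := T_i \simeq (\CC^*)^{n-1}$ equipped with the Laurent polynomial $h_0 := h_z^{\Gamma_i}$. On the affine chart $U_i \simeq \CC^n$ of $Z_\Sigma$, the $T$-orbit $T_i = \{y_n = 0\}$ inherits the structure of an $(n-1)$-dimensional algebraic torus with coordinates $(y_1, \ldots, y_{n-1})$; under the corresponding torus embedding, the restriction of $h_z^{\Gamma_i}$ to $T_i$ becomes (up to a nowhere vanishing monomial factor) a Laurent polynomial on $T_i$ whose Newton polytope, viewed in the character lattice $\ZZ^{n-1}$ of $T_i$, is an $(n-1)$-dimensional lattice polytope of normalized volume $v_i = \Vol_{\ZZ}(\Gamma_i)$.

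Next I would verify that $h_z^{\Gamma_i}$ is non-degenerate on $T_i$ in the sense of Kouchnirenko. This is immediate from Definition \ref{AND}: every proper face $\Gamma'$ of $\Gamma_i$ is a face of $\Delta$ not containing the origin, and $(h_z^{\Gamma_i})^{\Gamma'} = h_z^{\Gamma'}$, so the non-vanishing condition on partial derivatives is inherited from the non-degeneracy of $h_z$. Note also that since $\LL$ is trivialized locally as $y_1^{\beta_1} \cdots y_{n-1}^{\beta_{n-1}} y_n^{\beta_n}$ on $U_i$, its restriction to $T_i = \{y_n=0\}$ is precisely $\LL_i$.

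With these two inputs in place, Theorem \ref{TOD} applied with $k = n-1$, $h_0 = h_z^{\Gamma_i}$, $\Delta_0 = \Gamma_i$, and $\LL_0 = \LL_i$ yields, for generic $(\beta_1, \ldots, \beta_{n-1}) \in \CC^{n-1}$, the concentration of $H_p(T_i^{\an} \setminus \{h_z^{\Gamma_i}=0\}^{\an}; \LL_i)$ in degree $n-1$, the dimension formula $\dim = v_i$, and a basis produced by the twisted Morse theory applied to the function $|h_z^{\Gamma_i}(y) \cdot y^{-a'}|^{-2}$ for a generic choice of $a' \in \Int(\Gamma_i) \subset \RR^{n-1}$.

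The main obstacle is the degenerate case $\dim \Gamma_i < n-1$, which can occur for rays $\rho_i$ added in the smooth subdivision $\Sigma$ that are not orthogonal to a facet of $\Delta$. Here $v_i = 0$ and Theorem \ref{TOD} does not apply literally because $\Delta_0 = \Gamma_i$ fails to be full-dimensional in the character lattice of $T_i$. One must argue separately that the twisted homology vanishes in every degree: up to a monomial factor, $h_z^{\Gamma_i}$ pulls back from a Laurent polynomial on a quotient torus $T_i' \simeq (\CC^*)^d$ with $d = \dim \Gamma_i$, so $T_i \setminus \{h_z^{\Gamma_i}=0\}$ is a $(\CC^*)^{n-1-d}$-bundle over $T_i' \setminus \{h_z^{\Gamma_i}=0\}$, and the Künneth formula combined with the vanishing of the cohomology of $\CC^*$ with nontrivial rank-one coefficients gives the desired vanishing for generic $\beta$.
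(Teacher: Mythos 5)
Your proposal is correct and follows essentially the same route as the paper: the full-dimensional case $\dim\Gamma_i=n-1$ is reduced to Theorem \ref{TOD} (with the non-degeneracy of $h_z^{\Gamma_i}$ inherited from that of $h_z$, since every face of $\Gamma_i$ is a face of $\Delta$ not containing the origin), and the case $\dim\Gamma_i<n-1$, i.e.\ $v_i=0$, is handled by splitting off a $\CC^*$-factor on which the generic rank-one local system has no homology. The paper's own proof is just a terser version of exactly this two-case argument.
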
 

\begin{proof} 
If $\dim \Gamma_i =n-1$ ($\Longleftrightarrow 
v_i>0$), the assertion follows immediately 
from Theorem \ref{TOD}. If $\dim \Gamma_i <n-1$ 
($\Longleftrightarrow 
v_i=0$), we have $T_i^{\an} \setminus  
\{ h_z^{\Gamma_i}=0 \}^{\an} \simeq 
\CC^* \times W$ for an open subset 
$W$ of $(\CC^*)^{n-2}$. Hence 
for generic $(\beta_1, \ldots, 
\beta_{n-1}) \in \CC^{n-1}$ there 
exists an isomorphism  
$H_p (T_i^{\an} \setminus  
\{ h_z^{\Gamma_i}=0 \}^{\an} ; \LL_i )
 \simeq 0$ for any $p \in \ZZ$. 
\end{proof} 

Similarly we can prove also 
the following proposition. 

\begin{proposition}\label{TAK} 
For each generic parameter 
vector $c \in \CC^n$ 
and $1 \leq i \leq l$ there exists a 
sufficiently small tubular neighborhood $W_i$ 
of $T_i^{\an}$ in $Z^{\an}$ such that for 
its open subset $W_i^{\circ}=W_i \cap 
T^{\an} \subset T^{\an}$ we have 
\begin{equation}
\dim H_p^{\rd} (W_i^{\circ})
=\begin{cases}
v_i \times m_i & (p=n), \\
\ 0 & (p \not= n). 
\end{cases}
\end{equation}
and can construct a basis 
$\delta_{ijk} \in H_n^{\rd} (W_i^{\circ})$ 
($1 \leq j \leq v_i$, $1 \leq k \leq m_i$) 
of $H_n^{\rd} (W_i^{\circ})
=H_n(W_i^{\circ} \cup Q, 
Q; \iota_* \LL )$ 
by the twisted Morse theory. 
\end{proposition}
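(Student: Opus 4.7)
The plan is to follow closely the blueprint of Section \ref{sec:6}, combining three ingredients: the twisted Morse-theoretic computation on $T_i$ provided by Proposition \ref{TODI}, the one-dimensional rapid decay homology in the normal direction to $T_i$ given by Lemma \ref{ECL}, and a Mayer--Vietoris patching procedure to glue them over the blow-up tower. Concretely, I would first choose the affine chart $U_i \subset Z_\Sigma$ with local coordinates $(y_1, \ldots, y_n)$ as in Proposition \ref{TODI}, so that $T_i = \{ y_n = 0 \}$, the local defining equation of $D_i$ is $y_n = 0$, and $h_z$ has a pole of order $m_i$ along $D_i$. Under the complex blow-up $Z = \tl{Z_{\Sigma}} \longrightarrow Z_{\Sigma}$ the transverse intersection of $\overline{h_z^{-1}(0)}$ with $D_i$ (a hypersurface in $T_i$ with Euler characteristic $(-1)^n v_i$) is replaced by a tower of $\PP^1$-bundles $E_{ij1}, \ldots, E_{ijm_i}$, of which only $E_{ijm_i}$ is irrelevant.

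Next, I would shrink $W_i$ so that $W_i^\circ$ fibers (after retraction) over $T_i^{\an} \setminus \{ h_z^{\Gamma_i}=0 \}^{\an}$ with fiber a small punctured disk of the variable $y_n$, on which $h_z$ has a pole of order $m_i$. Over each small contractible open $V \subset T_i^{\an} \setminus \{ h_z^{\Gamma_i}=0 \}^{\an}$ the rapid decay chain complex of the corresponding piece of $W_i^\circ$ splits as a product, and by Lemma \ref{ECL} the rapid decay homology of the normal direction is concentrated in degree $1$ with dimension $m_i$. Using the genericity of $c$ (the parameter $c$ nonresonant, together with a condition of the form $\beta_i \notin \ZZ$ and $m_j \beta_i - m_i \beta_j \notin \ZZ$ inherited from Lemmas \ref{2-dim-1} and \ref{2-dim-new}), the usual twisted homology of $T_i^{\an} \setminus \{ h_z^{\Gamma_i}=0\}^{\an}$ and of all boundary components of the blow-up tower can be forced to vanish outside the top degree. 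Combined with Proposition \ref{TODI}, this yields, via a Künneth-type argument for each chart $V$, rapid decay homology concentrated in degree $n$ of dimension $v_i \cdot m_i$.

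The basis is then produced by explicitly tensoring the two constructions. Take a Morse basis $\{ \sigma_{ij} \}_{j=1}^{v_i}$ of $H_{n-1}(T_i^{\an} \setminus \{ h_z^{\Gamma_i}=0 \}^{\an}; \LL_i)$ of Proposition \ref{TODI}, and the $m_i$ rapid decay $1$-cycles $\tau_{ik}$ ($1 \leq k \leq m_i$) in the transverse punctured disk from Lemma \ref{ECL} (ii). For each pair $(j,k)$, drag the fiber cycle $\tau_{ik}$ along the stable-manifold cycle $\sigma_{ij}$ (using the smoothness of the transverse structure away from $\overline{h_z^{-1}(0)}$, and, near the transversal hypersurface, an analog of the figure-$8$ construction of Proposition \ref{2-dim-new}) to obtain rapid decay $n$-chains $\delta_{ijk}$ with boundary in $Q$. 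Patching with Mayer--Vietoris over a covering of $T_i^{\an} \setminus \{ h_z^{\Gamma_i}=0 \}^{\an}$ adapted to the Morse decomposition of Proposition \ref{H-dim-1} gives that the $\delta_{ijk}$ span $H_n^{\rd}(W_i^\circ)$ and are linearly independent.

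The main obstacle, as in the two-dimensional case, lies in controlling the behavior of chains along the tower $E_{ij1} \cup \cdots \cup E_{ijm_i}$ over the transversal hypersurface $T_i \cap \overline{h_z^{-1}(0)}$: the normal fibration has no global trivialization there, and one must verify by a Mayer--Vietoris argument, combined with the vanishing results used in Lemma \ref{2-dim-1} (ii) and Proposition \ref{2-dim-new}, that the dragged cycles $\delta_{ijk}$ close up correctly with their ends confined to the rapid decay set $Q$. The higher-dimensional version of the figure-$8$ construction is what requires the nonresonance conditions on $c$ and the generic-$c$ hypothesis of the proposition. Once this local vanishing and gluing is in place, the dimension formula and the explicit basis follow by the same counting argument used at the end of Section \ref{sec:6}.
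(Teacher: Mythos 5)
Your overall strategy --- Morse theory on the base $T_i^{\an}\setminus\{h_z^{\Gamma_i}=0\}^{\an}$ via Propositions \ref{H-dim-1}/\ref{TODI}, Lemma \ref{ECL} in the normal $y_n$-direction, and Mayer--Vietoris patching --- is the same as the paper's. The paper organizes the "tensoring'' you describe not as a fiberwise K\"unneth formula but as a filtration of $W_i^\circ$ by the sublevel sets $W_{i,t}^\circ$ of $f_i=|h_z^{\Gamma_i}(x)x^{-a}|^{-2}$ pulled back to the product with $\CC_{y_n}$; passing a critical value contributes a term $H_n\bigl((\overline{S_q}\times B_\e^*)\cup Q,(\partial S_q\times B_\e^*)\cup Q;\iota_*\LL\bigr)$ of dimension $m_i$ by Lemma \ref{ECL}, sitting in a long exact sequence. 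Up to this point your proposal and the paper coincide in substance.

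The genuine gap is in how the argument is closed. You correctly identify the hard point --- the failure of the product structure along the blow-up tower $E_{ij1}\cup\cdots\cup E_{ijm_i}$ over $\overline{h_z^{-1}(0)}\cap D_i$ --- but you propose to resolve it by a local higher-dimensional "figure-$8$'' construction and local vanishing, which you do not actually carry out; there is no reason to expect the connecting homomorphisms in your Mayer--Vietoris/Leray argument to vanish from purely local considerations. The paper avoids this local analysis entirely by a global dimension count: the Morse long exact sequences give only the inequality $\sum_i\dim H_n^{\rd}(W_i^\circ)\leq\sum_i v_i m_i=\Vol_{\ZZ}(\Delta)$, while the vanishing of the ordinary twisted homologies $H_p(T^{\an};\LL)$, $H_p(W_i^\circ;\LL)$ for generic $c$ yields $\oplus_i H_p^{\rd}(W_i^\circ)\simeq H_p^{\rd}(T^{\an})$, whose dimension in degree $n$ is exactly $\Vol_{\ZZ}(\Delta)$ by \eqref{key} and Theorem \ref{Main}. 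Equality is then forced everywhere, the long exact sequences collapse to short exact sequences, and the bases $\delta_{ijk}$ are obtained by lifting through them. Without importing this global input (the already-computed rank of $H_n^{\rd}(T^{\an})$ and the identity $\sum_i v_im_i=\Vol_{\ZZ}(\Delta)$), your argument does not establish either the concentration in degree $n$ or the linear independence of the proposed cycles.
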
 

\begin{proof} 
Let $f_i: T_i^{\an} \setminus  
\{ h_z^{\Gamma_i}=0 \}^{\an} \longrightarrow 
\RR$ be the function on $T_i^{\an} \setminus  
\{ h_z^{\Gamma_i}=0 \}^{\an}$ defined by 
$f_i(x)=| h_z^{\Gamma_i}(x)x^{-a} |^{-2}$ 
for a sufficiently generic $a \in \Int 
( \Gamma_i) \subset \RR^{n-1}$. Then by 
Proposition \ref{H-dim-1} the function 
$f_i$ has only $v_i$ non-degenerate 
(Morse) critical points in 
$T_i^{\an} \setminus  
\{ h_z^{\Gamma_i}=0 \}^{\an}$. By the 
product decomposition $U_i^{\an} \simeq 
\CC^n_y = \CC^{n-1}_{y_1, \ldots, y_{n-1}} 
\times \CC_{y_n}$ we consider $f_i$ also 
as a function on the open subset 
\begin{equation}
U_i^{\circ}=
\left( 
T_i^{\an} \setminus  
\{ h_z^{\Gamma_i}=0 \}^{\an} 
\right) \times \CC_{y_n} \subset U_i^{\an} 
\end{equation}
of $U_i^{\an}$. For $t \in \RR_{>0}$ we set 
\begin{equation}
W_{i,t}^{\circ} = \{ y \in U_i^{\circ} \cap 
W_i^{\circ} \ | \ f_i(y)< t \}. 
\end{equation}
Then it follows from Lemma \ref{2-dim-1} (i) 
that by shrinking $W_i$ and taking 
large enough $t_0 \gg 1$ we obtain isomorphisms 
\begin{equation}\label{EEE1} 
H_p^{\rd}(W_{i,t_0}^{\circ}) \simto 
H_p^{\rd}(W_{i}^{\circ}) \quad (p \in \ZZ ). 
\end{equation}
In the same way as the proof of Proposition 
\ref{VAN}, for generic $c \in \CC^n$ 
and sufficiently small 
$0< \e \ll 1$ we can show that 
$H_p^{\rd}(W_{i,\e}^{\circ}) \simeq 0$ 
$(p \in \ZZ )$. 
Let $0 < t_1 < t_2 < \cdots < t_r < 
+ \infty$ be the critical values of 
$f_i: T_i^{\an} \setminus  
\{ h_z^{\Gamma_i}=0 \}^{\an} \longrightarrow 
\RR$. We may assume that $t_r <t_0$. 
For $1 \leq j \leq r$ 
let $\alpha (1), \alpha (2), 
\ldots, \alpha ( n_j) \in 
T_i^{\an} \setminus  
\{ h_z^{\Gamma_i}=0 \}^{\an}$ be the 
critical points of $f_i$ such that 
$f_i (\alpha (q))=t_j$. 
Let $S_q \subset T_i^{\an} \setminus  
\{ h_z^{\Gamma_i}=0 \}^{\an}$ be the stable 
manifold of the gradient flow of 
$f_i$ passing through 
its critical point $\alpha (q) \in 
T_i^{\an} \setminus  
\{ h_z^{\Gamma_i}=0 \}^{\an}$ and 
$B_{\e}^* \subset \CC^*_{y_n}$ the punctured 
disk $\{ y_n \in \CC \ | \ 0< | y_n | < \e \}$ 
for sufficiently small 
$0 < \varepsilon \ll 1$. 
We assume here that $S_q$ are homeomorphic 
to the $(n-1)$-dimensional disk. 
Then by Lemma \ref{ECL} 
and the K\"{u}nneth formula we have 
\begin{equation} 
H_p( (\overline{S_q} \times B_{\e}^*) \cup Q, 
(\partial S_q \times B_{\e}^*) \cup Q ; 
\iota_* \LL ) \simeq 0 \ \ \ \ 
( p \not= n) 
\end{equation}
and the dimension of the vector space 
$H_n( (\overline{S_q} \times B_{\e}^*) \cup Q, 
(\partial S_q \times B_{\e}^*) \cup Q ; 
\iota_* \LL )$ is $m_i$. 
This implies that 
for any $1 \leq j \leq r$ and 
generic $c \in \CC^n$ there exists a 
short exact sequence 
\begin{eqnarray}
0 \longrightarrow  
H_n^{\rd}( W_{i,t_j- \e }^{\circ} ) 
\longrightarrow 
H_n^{\rd}( W_{i,t_j+ \e }^{\circ} ) 
\longrightarrow 
\\
\oplus_{q=1}^{n_j} 
H_n( (\overline{S_q} \times B_{\e}^*) \cup Q, 
(\partial S_q \times B_{\e}^*) \cup Q ; 
\iota_* \LL ) \longrightarrow 0. 
\end{eqnarray}
Hence by \eqref{EEE1} we get 
\begin{equation}
\dim H_p^{\rd} (W_i^{\circ})
=\begin{cases}
v_i \times m_i & (p=n), \\
\ 0 & (p \not= n). 
\end{cases}
\end{equation}
Moreover by Lemma \ref{ECL} we can construct a 
natural basis of the vector space 
$H_n( (\overline{S_q} \times B_{\e}^*) \cup Q, 
(\partial S_q \times B_{\e}^*) \cup Q ; 
\iota_* \LL ) \simeq \CC^{m_i}$. 
Lifting these bases to 
$H_n^{\rd}(W_{i,t_0}^{\circ}) \simeq 
H_n^{\rd}(W_{i}^{\circ})$ with the help of 
the above short exact sequences we obtain 
the one $\delta_{ijk} \in H_n^{\rd} (W_i^{\circ})$ 
($1 \leq j \leq v_i$, $1 \leq k \leq m_i$) 
of $H_n^{\rd}(W_{i}^{\circ})$. 
\end{proof} 

Since for generic $c \in \CC^n$ we have 
the vanishings of the usual twisted 
homology groups $H_p(T^{\an}; \LL )$ and 
$H_p( W_i^{\circ} ; \LL )$ etc., 
by Mayer-Vietoris exact sequences for 
relative twisted homology 
groups and Proposition 
\ref{TAK} we obtain the following theorem. 

\begin{theorem}\label{H-dim} 
For generic nonresonant 
parameter vectors $c \in \CC^n$ 
the natural morphisms 
\begin{equation}
\Theta_i : H_n^{\rd} (W_i^{\circ}) \longrightarrow 
H_n^{\rd} (T^{\an}) \qquad (1 \leq i \leq l) 
\end{equation}
are injective and induce an isomorphism 
\begin{equation}
\Theta : \oplus_{i=1}^l H_n^{\rd} (W_i^{\circ}) \simto 
H_n^{\rd} (T^{\an}). 
\end{equation}
In particular, the cycles 
$\gamma_{ijk}:= \Theta_i ( \delta_{ijk}) 
\in H_n^{\rd}(T^{\an})$ 
($1 \leq i \leq l$, $1 \leq j \leq v_i$, 
$1 \leq k \leq m_i$) form a basis of 
the vector space 
 $H_n^{\rd}(T^{\an})=H_n 
(T^{\an} \cup Q, Q; \iota_* \LL )$ over $\CC$. 
\end{theorem}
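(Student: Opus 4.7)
The plan is to extract Theorem \ref{H-dim} from the Mayer-Vietoris machinery already assembled in the proof of Proposition \ref{TAK}, combined with the total dimension formula $\sum_{i=1}^l v_i m_i = \Vol_{\ZZ}(\Delta)$ obtained from the Euler characteristic computation in the proof of Theorem \ref{Main} and the vanishing \eqref{key}. The right-hand side $H_n^{\rd}(T^{\an})$ has dimension $\Vol_{\ZZ}(\Delta)$, while Proposition \ref{TAK} identifies each summand on the left as having dimension $v_i m_i$, so the two sides a priori have the same total dimension; the work is to show the natural map realizes this equality.

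First I would cover $T^{\an}$ by the open sets $W_1^{\circ},\ldots,W_l^{\circ}$ together with a complementary open set $V \subset T^{\an}$ whose closure in $Z = \tl{Z_{\Sigma}}$ avoids every relevant toric divisor $D_1,\ldots,D_l$ (it may still abut irrelevant components). On $V$ the function $h_z$ is bounded, so the rapid decay condition imposes nothing at the boundary strata accessible from $V$; consequently $H_p^{\rd}(V)$ collapses to an ordinary twisted homology group with coefficients in $\LL$ (up to contributions from irrelevant divisors, which are handled by Lemma \ref{EC} and the same Mayer-Vietoris argument used in the proof of Proposition \ref{VAN}). For generic nonresonant $c$, the local system $\LL = \CC_{T^{\an}} x_1^{c_1-1}\cdots x_n^{c_n-1}$ has nontrivial monodromy around every toric stratum that appears, and the same generic-vanishing argument applies to the pairwise intersections $W_i^{\circ}\cap W_j^{\circ}$, $W_i^{\circ}\cap V$, and all higher intersections, which can be arranged to be $(\CC^*)$-bundles over lower-dimensional toric strata.

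Next I would feed these vanishings into the Mayer-Vietoris spectral sequence for the open cover $\{V, W_1^{\circ},\ldots,W_l^{\circ}\}$ of $T^{\an}$. With every intersection term and the term from $V$ contributing zero in the relevant degrees, the sequence degenerates to give a direct sum decomposition
\[
\bigoplus_{i=1}^l H_n^{\rd}(W_i^{\circ}) \;\simto\; H_n^{\rd}(T^{\an}),
\]
which simultaneously proves injectivity of each $\Theta_i$ and surjectivity of $\Theta$. The basis assertion then follows by transporting the explicit bases $\{\delta_{ijk}\}$ constructed in Proposition \ref{TAK} through the isomorphism.

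The main obstacle is the nonresonance bookkeeping: one has to arrange the cover so that every stratum of $\tilde{D}$ that appears in an intersection corresponds to some proper face of $\Delta$ whose associated "$\beta$-type" exponents (i.e.\ the values $\langle \kappa, (c_1-1,\ldots,c_n-1)\rangle$ for $\kappa$ a primitive vector of the relevant ray, and analogous pairings for higher-codimension strata) are non-integral for generic $c$. This is precisely what the nonresonance condition of Definition \ref{NRC}, strengthened to generic position, delivers, but verifying it uniformly across all intersections of the cover --- including along the exceptional divisors introduced by the toric blow-up $Z = \tl{Z_{\Sigma}}$ --- is the delicate point, and it is the same type of argument that already underlies Propositions \ref{2-dim-2} and \ref{2-dim-new} in the two-dimensional case.
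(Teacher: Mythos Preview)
Your proposal is correct and follows essentially the same route as the paper: the theorem is extracted from the proof of Proposition \ref{TAK}, where the Mayer--Vietoris argument together with the vanishing of the ordinary twisted homology groups $H_p(T^{\an};\LL)$, $H_p(W_i^{\circ};\LL)$ and their intersections for generic $c$ yields the direct sum decomposition $\oplus_i H_p^{\rd}(W_i^{\circ}) \simto H_p^{\rd}(T^{\an})$. Your introduction of an explicit complementary open set $V$ is a harmless repackaging of the same excision/triple argument the paper leaves implicit; the paper instead enlarges the $W_i^{\circ}$ to $\wht{W_i^{\circ}}$ so that their union already absorbs all of $Q$, then uses $H_p(T^{\an}\cup Q,\,\cup_i\wht{W_i^{\circ}}\cup Q)\simeq H_p(T^{\an},\,\cup_i\wht{W_i^{\circ}})$ and the vanishing of absolute twisted homology.
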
 

\section{Addendum}\label{sec:add}

In previous sections we obtained the integral representation  of 
confluent $A$-hypergeometric functions by using 
Hien's theory of rapid decay homologies. However we assumed 
there that the parameter $c \in \CC^n$ is 
non-resonant. Since the resonant case $c=(1,1,\ldots , 1)$ 
is important also in mirror symmetry, it is desirable 
to know what happens for resonant parameters. 
In Definition 3.4 of \cite{SW1} Schulze and Walther defined the set 
$\text{sRes}(A) \subset \CC$ of strongly-resonant parameters 
(see also Section 2.2 of \cite{R-W}). 
By their result and Proposition 3.11 of \cite{R-S}, 
we can easily check that the main results in 
Sections 4 and 5 of this paper 
hold true also for non-strongly-resonant parameters 
$c \notin \text{sRes}(A)$. 
Moreover, if $A$ is saturated 
i.e. $\ZZ^n \cap \RR_+ A= \NN A \cap \RR_+ A$, then 
we have $c=(1,1,\ldots , 1) \notin \text{sRes}(A)$. Note 
that under our assumption $\ZZ A = \ZZ^n$  
this condition is always satisfied if the polytope 
$\Delta_A = \text{conv} ( \{0\} \cup A) \subset \RR^n$ 
contains the origin $0 \in \RR^n$ in its 
interior. This implies in particular that 
for such $A$ the asymptotic expansions 
at infinity of confluent $A$-hypergeometric 
functions proved in Sections 5 of this paper holds 
for the resonant parameter $c=(1,1,\ldots , 1)$. 

\bigskip
\noindent{\bf Acknowledgement:} 
We thank Professor Thomas Reichelt for useful discussions 
for this addendum.


\begin{thebibliography}{99}

\bibitem{A}
A. Adolphson, 
Hypergeometric functions and rings 
generated by monomials, 
\textit{Duke Math. Journal}, \textbf{73} 
 (1994), 269-290. 

\bibitem{A-E-T}
K. Ando, A. Esterov and K. Takeuchi, 
Monodromies at infinity of 
confluent $A$-hypergeometric functions, 
arXiv:math/1312.6786v3, submitted. 

\bibitem{A-K}
K. Aomoto and M. Kita, 
\textit{Theory of hypergeometric functions}, 
Monographs in Mathematics, Springer-Verlag, 
2011.

\bibitem{B-0}
F. Beukers, 
Irreducibility of $A$-hypergeometric systems, 
\textit{Indag. Math.}, \textbf{21} 
 (2011), 30-39. 

\bibitem{B}
F. Beukers, 
Monodromy of $A$-hypergeometric functions, 
arXiv:1101.0493v2. 

\bibitem{B-H}
L. Borisov and P. Horja, 
Mellin-Barnes integrals as 
Fourier-Mukai transforms, 
\textit{Adv. in Math.}, \textbf{207} 
 (2006), 876-927. 

\bibitem{Daia}
L. Daia, La transformation de Fourier pour 
les $\D$-modules, 
\textit{Ann. Inst. Fourier}, \textbf{50} 
 (2000), 1891-1944. 

\bibitem{Dubrovin}
B. Dubrovin, Geometry and analytic theory of 
Frobenius manifolds, 
\textit{Proceedings of the International 
Congress of Mathematicians}, Vol. \textbf{II} 
 (1998), 315-326. 

\bibitem{Esterov}
A. Esterov, 
Index of a real singular point and its Newton diagram, 
\textit{Moscow Univ. Math. Bull.}, \textbf{58} (2003), 
7-11.

\bibitem{E-2}
A. Esterov, 
Tropical double point formula, 
arXiv:math/1305.3234v3, preprint. 

\bibitem{E-T}
A. Esterov and K. Takeuchi, Motivic Milnor 
fibers over complete intersection 
varieties and their virtual Betti 
numbers, \textit{Int. Math. Res. Not}, 
\textbf{2012}, No. 15 (2012), 3567-3613.

\bibitem{Fulton}
W. Fulton, \textit{Introduction to 
toric varieties}, Princeton University 
Press, 1993.

\bibitem{G-G-Z}
I.M. Gelfand, M. I. Graev 
and A. Zelevinsky, Holonomic systems of 
equations and series of hypergeometric type, 
\textit{Dokl. Akad. Nauk SSSR}, \textbf{295} 
(1987), 14-19.

\bibitem{G-K-Z-1}
I.M. Gelfand, M. Kapranov 
and A. Zelevinsky, Hypergeometric 
functions and toral manifolds, 
\textit{Funct. Anal. Appl.}, \textbf{23} 
(1989), 94-106.

\bibitem{G-K-Z-2}
I.M. Gelfand, M. Kapranov 
and A. Zelevinsky, Generalized Euler 
integrals and $A$-hypergeometric 
functions, \textit{Adv. in Math.}, 
\textbf{84} (1990), 255-271.

\bibitem{G-K-Z}
I.M. Gelfand, M. Kapranov 
and A. Zelevinsky, 
\textit{Discriminants, resultants 
and multidimensional determinants}, 
Birkh{\"a}user, 1994.

\bibitem{H-1}
M. Hien, 
Periods for irregular singular connections 
on surfaces, 
\textit{Math. Ann.}, \textbf{337} 
 (2007), 631-669. 

\bibitem{H-2}
M. Hien, 
Periods for flat algebraic connections, 
\textit{Invent Math.}, \textbf{178} 
 (2009), 1-22. 

\bibitem{H-R}
M. Hien and C. Roucairol, 
Integral representations for solutions 
of exponential Gauss-Manin systems, 
\textit{Bull. Soc. Math. Fr.}, \textbf{136} 
 (2008), 505-532. 

\bibitem{Horja}
P. Horja, Hypergeometric functions and mirror 
symmetry in toric varieties, 
arXiv:math/9912109v3. 

\bibitem{Hotta}
R. Hotta, 
Equivariant $\D$-modules, 
arXiv:math/9805021v1. 

\bibitem{H-T-T}
R. Hotta, K. Takeuchi and T. Tanisaki, 
\textit{D-modules, perverse 
sheaves and representation theory}, 
Progress in Math., Birkh{\"a}user, Boston, 2008. 

\bibitem{K-L}
N.M. Katz and G. Laumon, 
Transformation de Fourier et majoration 
de sommes exponentielles, 
\textit{Publ. Math. IHES}, \textbf{62} 
 (1986), 361-418. 

\bibitem{Khovanskii}
A.G. Khovanskii, Newton polyhedra 
and toroidal varieties, \textit{Funct. 
Anal. Appl.}, \textbf{11} (1978), 289-296.

\bibitem{K-H-T}
H. Kimura, Y. Haraoka and K. Takano, 
The generalized confluent hypergeometric functions, 
\textit{Proc. Japan Acad.}, \textbf{68} 
 (1992), 290-295. 

\bibitem{Kushnirenko}
A.G. Kouchnirenko, Poly\`edres 
de Newton et nombres de Milnor, 
\textit{Invent. Math.}, \textbf{32} (1976), 1-31.

\bibitem{L-S}
A. Libgober and S. Sperber, 
On the zeta function of monodromy of a 
polynomial map, \textit{Compositio Math.}, 
\textbf{95} (1995), 287-307.

\bibitem{Malgrange}
B. Malgrange, 
Transformation de Fourier g\'eom\'etrique, 
\textit{Ast\'erisque, S\'em. Bourbaki}, \textbf{692} 
 (1987-88), 133-150. 

\bibitem{M-T-1}
Y. Matsui and  K. Takeuchi, 
A geometric degree formula for
$A$-discriminants and Euler 
obstructions of toric varieties, 
\textit{Adv. in Math. }, 
\textbf{226} (2011), 2040-2064.

\bibitem{M-T-2}
Y. Matsui and K. Takeuchi, 
Milnor fibers 
over singular toric varieties and 
nearby cycle sheaves, 
\textit{Tohoku Math. Journal}, 
\textbf{63} (2011), 113-136.

\bibitem{M-T-3}
Y. Matsui and K. Takeuchi, 
Monodromy at infinity, Newton polyhedra and 
constructible sheaves, 
\textit{Mathematische Zeitschrift}, 
\textbf{268} (2011), 
409-439.

\bibitem{M-T-4}
Y. Matsui and K. Takeuchi, Monodromy 
at infinity of polynomial maps and 
Newton polyhedra, 
with Appendix by C. Sabbah, 
\textit{Int. Math. Res. Not}, 
\textbf{2013}, No. 8 (2013), 
1691-1746. 

\bibitem{M-M-W}
F. Matusevich, E. Miller and U. Walther, 
Homological methods for hypergeometric families, 
\textit{Journal of the AMS}, 
\textbf{18} (2005), 919-941.

\bibitem{Mochizuki}
T. Mochizuki, 
\textit{Wild harmonic bundles and wild pure 
twister $\D$-modules}, 
\textit{Ast\'erisque}, \textbf{340} 
 (2011).  

\bibitem{Oda}
T. Oda, \textit{Convex bodies and 
algebraic geometry. An introduction to 
the theory of toric varieties}, Springer-Verlag, 1988.

\bibitem{Oka}
M. Oka, 
\textit{Non-degenerate complete 
intersection singularity}, Hermann, 
Paris, 1997.

\bibitem{Paj}
A. Pajitnov, 
\textit{Circle-valued Morse theory}, 
de Gruyter Studies in Mathematics, 
\textbf{32}, 2006.

\bibitem{Pham}
F. Pham, 
Le descente des cols par les onglets de 
Lefschetz, avec vues sur Gauss-Manin, 
\textit{Ast\'erisque}, \textbf{130} 
 (1985), 14-47.  

\bibitem{R-S}
T. Reichelt and C. Sevenheck, 
Hypergeometric Hodge modules, 
arXiv:1503.01004v5, preprint.

\bibitem{R-W}
T. Reichelt and U. Walther, 
Gauss-Manin systems of families of Laurent polynomials 
and $A$-hypergeometric systems, 
arXiv:1703.03057v3, preprint.
 
\bibitem{S1}
M. Saito, 
Irreducible quotients of 
$A$-hypergeometric systems, 
\textit{Compositio Math.}, 
\textbf{147} (2011), 
613-632.

\bibitem{S-S-T} M. Saito, B. Sturmfels 
and N. Takayama, \textit{Gr{\"o}bner deformations 
of hypergeometric differential equations}, 
Springer-Verlag, 2000. 

\bibitem{S-W}
M. Schulze and U. Walther, 
Irregularity of hypergeometric systems 
via slopes along coordinate subspaces, 
\textit{Duke Math. Journal}, \textbf{142} 
(2008), 465-509.

\bibitem{SW1}
M. Schulze and U. Walther, 
Hypergeometric $D$-modules and twisted 
Gauss-Manin systems, 
\textit{Journal of Algebra}, \textbf{322} 
(2009), 3392-3409. 

\bibitem{SW2}
M. Schulze and U. Walther, 
Resonance equals reducibility for 
$A$-hypergeometric systems, 
\textit{Algebra Number Theory}, 
\textbf{6} (2012), 527-537. 

\bibitem{T-1}
K. Takeuchi, 
Monodromy at infinity of 
$A$-hypergeometric functions 
and toric compactifications, 
\textit{Mathematische Annalen}, 
\textbf{348} (2010), 
815-831.

\bibitem{T-U}
S. Tanabe and K. Ueda, 
Invariants of hypergeometric groups for 
Calabi-Yau complete intersections in 
weighted projective spaces, preprint. 

\bibitem{Varchenko}
A.N. Varchenko, Zeta-function of 
monodromy and Newton's diagram, 
\textit{Invent. Math.}, 
\textbf{37} (1976), 253-262.


\end{thebibliography}
\end{document}